\newtheorem{lemma}{Lemma}[section]
\newtheorem{theorem}[lemma]{Theorem}
\newtheorem{proposition}[lemma]{Proposition}
\newtheorem{prop}[lemma]{Proposition}
\newtheorem{problem}[lemma]{Problem}
\newtheorem{question}[lemma]{Question}
\newtheorem{claim*}{Claim}
\theoremstyle{definition}
\newtheorem{remark}[lemma]{Remark}
\newcommand{\PP}{{\mathbb P}}
\newcommand{\C}{{\mathbb C}}
\newcommand{\Q}{{\mathbb Q}}
\newcommand{\Z}{{\mathbb Z}}
\newcommand{\eps}{\epsilon}
\newcommand{\calC}{{\mathcal C}}
\newcommand{\calD}{{\mathcal D}}
\newcommand{\calF}{{\mathcal F}}
\newcommand{\calK}{{\mathcal K}}
\newcommand{\frakn}{{\mathfrak n}}
\newcommand{\hideqed}{\renewcommand{\qed}{}}
\DeclareMathOperator{\HH}{H}
\DeclareMathOperator{\rk}{rk}
\DeclareMathOperator{\Char}{char}
\DeclareMathOperator{\Hom}{Hom}
\DeclareMathOperator{\Br}{Br}
\DeclareMathOperator{\SL}{SL}
\DeclareMathOperator{\disc}{disc}
\DeclareMathOperator{\diag}{diag}
\DeclareMathOperator{\Id}{Id}
\newcommand{\isom}{\simeq}
\numberwithin{equation}{section}
\numberwithin{table}{section}
\newcommand{\defi}[1]{\textsf{#1}} % for defined terms
\title[Kodaira dimension of $\calC_d$]{Kodaira dimension of moduli of special cubic fourfolds}
\author{Sho Tanimoto}
\author{Anthony V\'arilly-Alvarado}
\address{Department of Mathematical Sciences, University of Copenhagen, Universitetspark 5 2100 Copenhagen $\emptyset$ Denmark}
\email{sho@math.ku.dk}
\urladdr{http://shotanimoto.wordpress.com}
\address{Department of Mathematics MS 136, Rice University, 6100 S.\ Main St., Houston, TX 77005, USA}
\email{av15@rice.edu}
\urladdr{http://math.rice.edu/\~{}av15}
\date{}
\subjclass[2010]{Primary 14J35, 14J15; Secondary 32N15, 32M15}
\begin{document}

	%%%%%%%%%%%%%%%%%%%%%%%%%%%%%%%%%%%%%%%%%%%%%%%%%%%%%%%%%%%%%%%%%%%%%%%%%%%%
	\begin{abstract}
		A special cubic fourfold is a smooth hypersurface of degree three and dimension four that contains a surface not homologous to a complete intersection. Special cubic fourfolds give rise to a countable family of Noether-Lefschetz divisors $\calC_d$ in the moduli space $\calC$ of smooth cubic fourfolds. These divisors are irreducible 19-dimensional varieties birational to certain orthogonal modular varieties. We use the ``low-weight cusp form trick'' of Gritsenko, Hulek, and Sankaran to obtain information about the Kodaira dimension of $\calC_d$. For example, if $d = 6n + 2$, then we show that $\calC_d$ is of general type for $n > 18$, $n \notin \{20,21,25\}$; it has nonnegative Kodaira dimension if $n > 13$ and $n \neq 15$. In combination with prior work of Hassett, Lai, and Nuer, our investigation leaves only $20$ values of $d$ for which no information on the Kodaira dimension of $\calC_d$ is known. We discuss some questions pertaining to the arithmetic of K3 surfaces raised by our results.
	\end{abstract}
	%%%%%%%%%%%%%%%%%%%%%%%%%%%%%%%%%%%%%%%%%%%%%%%%%%%%%%%%%%%%%%%%%%%%%%%%%%%%

	\maketitle
	
%%%%%%%%%%%%%%%%%%%%%%%%%%%%%%%%%%%%%%%%%%%%%%%%%%%%

\section{Introduction}
	
	Let $\calC$ denote the $20$-dimensional coarse moduli space of smooth complex cubic fourfolds.  A very general $X \in \calC$ contains few algebraic surfaces. More precisely, the intersection
	\[
		A(X) := \HH^{2,2}(X) \cap \HH^4(X,\Z)
	\]
records the algebraic surfaces contained in $X$, because the integral Hodge conjecture holds for cubic fourfolds~\cite[Theorem~18]{VoisinHodge}, and $A(X) = \langle h^2\rangle$ for a very general $X\in \calC$, where $h$ denotes the restriction to $X$ of the hyperplane class in $\PP^5$.

In~\cite{HassettComp}, Hassett initiated the systematic study of the \defi{Noether-Lefschetz locus}
	\[
		\{X \in \calC : \rk(A(X)) > 1\}
	\]
consisting of smooth cubic fourfolds containing a surface not homologous to a complete intersection.  He defined a (labelled) \defi{special cubic fourfold} $X$ as a smooth cubic fourfold together with a rank two saturated lattice of algebraic cycles
	\[
		K := \langle h^2, T\rangle\subseteq A(X).
	\]
The \defi{discriminant} of $X$ is $d := |\disc(K)|$. The locus in $\calC$ of special cubic fourfolds with a labelling of discriminant $d$ is denoted $\calC_d$.

	We are starting to understand the geometry of the moduli spaces $\calC_d$. We already know from Hassett's work that $\calC_d$ is an irreducible divisor of $\calC$ if and only if $d > 6$ and $d \equiv 0$ or $2 \bmod 6$; it is empty otherwise~\cite[Theorem~1.0.1]{HassettComp}. Notably, Li and Zhang have recently determined the degrees of lifts of $\calC_d$ to $\PP^{55}$ (the projective space of all cubic fourfolds), for \emph{all} $d$, as coefficients of an explicitly computable modular form of weight $11$ and level $3$~\cite{LiZhang}. For low discriminants, Nuer~\cite{Nuer} has provided descriptions for the generic member of nonempty $\calC_d$ in the range $12 \leq d\leq 38$ and for $d = 44$, recovering known descriptions in the cases $d = 12$, $14$, and $20$ (the general member of $\calC_8$ is a cubic fourfold containing a plane). Nuer's work, combined with earlier results~\cites{Fano,Tregub,BD,HassettThesis}, also shows that $\calC_d$ is unirational (and thus has negative Kodaira dimension) for $8\leq d \leq 38$, and that $\calC_{44}$ has negative Kodaira dimension.  More recently, Lai~\cite{Lai} has shown that $\calC_{42}$ is uniruled, and thus has negative Kodaira dimension.
	
	Through well-documented connections with K3 surfaces, e.g.~\cites{HassettComp,AddingtonThomas,Huybrechts}, we know that some $\calC_d$ are of general type. For example, if $d \equiv 2 \bmod 6$, $4\nmid d$ and $9\nmid d$, and if every odd prime $p$ dividing $d$ satisfies $p \not\equiv 2 \bmod 3$, then $\calC_d$ is birational to the moduli space $\calK_d$ of polarized K3 surfaces of degree $d$~\cite[\S5]{HassettComp}. The spaces $\calK_d$ in turn are known to be of general type for $d > 122$ by groundbreaking work of Gritsenko, Hulek and Sankaran~\cite{GHSInventiones}. 
	
	In this paper, we study the birational type of the spaces $\calC_d$ for \emph{all large} $d$ simultaneously. Our main result is the following theorem.
		
	\begin{theorem} 
		\label{thm:MainThm}
		Let $\calC$ be the coarse moduli space of smooth cubic fourfolds, and let $\calC_d\subseteq \calC$ be the moduli space of special cubic fourfolds possessing a labelling of discriminant $d$.
		\smallskip
		\begin{enumerate}
			\item Assume that $d = 6n + 2$ for some integer $n$.
			\smallskip
			\begin{enumerate}
				\item If $n > 18$ and $n \notin \{20,21,25\}$ then $\calC_d$ is of general type;
				\smallskip
				\item If $n > 13$ and $n \neq 15$ then $\calC_d$ has nonnegative Kodaira dimension.
			\end{enumerate} 
			\smallskip
			\item Assume that $d = 6n$ for some integer $n$.
			\smallskip
			\begin{enumerate}
				\item If $n > 18$ and $n \notin \{20,22,23,25,30,32\}$ then $\calC_d$ is of general type;
				\smallskip
				\item If $n > 16$ and $n \notin \{18, 20, 22, 30\}$ then $\calC_d$ has nonnegative Kodaira dimension.
			\end{enumerate} 
		\end{enumerate}
	\end{theorem}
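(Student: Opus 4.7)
The plan is to apply the low-weight cusp form method of Gritsenko, Hulek, and Sankaran (henceforth GHS), adapted to the orthogonal modular varieties birational to $\calC_d$. As a first step, I would invoke Hassett's description to realize $\calC_d$ as the quotient $\Gamma_d \backslash \calD_{L_d}$, where $\calD_{L_d}$ is a type IV Hermitian symmetric domain attached to a lattice $L_d$ of signature $(2,19)$ and $\Gamma_d$ is an appropriate arithmetic subgroup of $O^+(L_d)$. The precise choice of $L_d$ depends on whether $d \equiv 0$ or $2 \pmod 6$ (and on finer divisibility by $4$ and $9$), so I would separate the analysis according to these two families, tabulating the genus of $L_d$ in each case.

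Next, I would translate the geometric question into a question about modular forms. By the Baily--Borel formalism, sections of the canonical bundle of a smooth toroidal compactification $\widetilde{\calF}(\Gamma_d, L_d)$ correspond to $\Gamma_d$-modular forms of weight $19$ that are cusp forms and vanish to appropriate order along the ramification divisor of the quotient map $\calD_{L_d}\to \Gamma_d \backslash \calD_{L_d}$. GHS's low-weight cusp form trick produces large spaces of pluricanonical forms out of a \emph{single} cusp form $F_a$ of weight $a < 19$ that vanishes along the ramification divisor, provided that the branch and boundary divisors of $\widetilde{\calF}(\Gamma_d, L_d)$ are controlled; under the standard hypotheses this implies $\calC_d$ is of general type. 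For the weaker conclusion of nonnegative Kodaira dimension one only needs a nonzero weight $19$ cusp form with the same vanishing, and GHS give a second criterion in this direction.

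The mechanism for producing the low-weight cusp form $F_a$ is Borcherds' automorphic form $\Phi_{12}$ on $O^+(II_{2,26})$ of weight $12$. For any primitive embedding $L_d \hookrightarrow II_{2,26}$ with orthogonal complement $K$, the quasi-pullback of $\Phi_{12}$ is a cusp form on $\calD_{L_d}$ of weight $12 + N(K)/2$, where $N(K)$ is the number of $(-2)$-roots in $K$, and it automatically vanishes along the ramification divisor (this is the content of the construction in GHS's Inventiones paper). Thus the main computational task is: for each $d$ in the stated range, exhibit a primitive embedding $L_d \hookrightarrow II_{2,26}$ with $N(K) \leq 12$ (so that $a = 12 + N(K)/2 < 19$) for the general-type statement, and with $N(K) \leq 14$ (so that $a \leq 19$) for the nonnegative Kodaira dimension statement. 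Using Nikulin's theory of primitive embeddings of even lattices into unimodular even lattices, this reduces to finding a negative definite lattice $K$ of rank $7$ in the genus determined by $L_d$, with a small number of roots.

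I expect the main obstacle to lie in this last step: root-counting on candidate complements $K$ of rank $7$. For small $n$ and for the sporadic exceptional values (the exclusion sets $\{20,21,25\}$, $\{20,22,23,25,30,32\}$, and so on) one must either exhibit $K$ by hand via explicit embeddings of known root sublattices (reminiscent of the Niemeier lattice case analysis in GHS), or else argue that no such $K$ with few roots exists. The boundary and ramification analysis — showing that the toroidal compactification contributes no additional obstructions beyond those killed by vanishing of $F_a$ — follows the template of GHS but must be checked for the specific lattices $L_d$, in particular that the branch divisor is the union of the reflective hyperplanes for $(-2)$- and, where relevant, $(-2d)$-vectors.
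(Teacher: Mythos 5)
Your high-level strategy is the one the paper follows: pass to the orthogonal modular variety $\overline{\Gamma}_d^+\backslash\calD_{K_d^\perp(-1)}$, apply the GHS low-weight cusp form trick, and produce the cusp form as a quasi-pullback of $\Phi_{12}$ along a primitive embedding $K_d^\perp(-1)\hookrightarrow L_{2,26}$ whose orthogonal complement contains few roots (and at least one: you omit the condition $N(K)>0$, without which the quasi-pullback need not be a cusp form). However, there are three genuine gaps. First, for $d\equiv 0\bmod 6$ the monodromy group $\overline{\Gamma}_d^+$ contains $\widetilde{O}^+(K_d^\perp(-1))$ as a subgroup of index $2$, so the quasi-pullback — which Theorem~\ref{thm:quasipullback} only makes modular for the stable orthogonal group — is not a priori a modular form for the group you need. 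Your proposal never addresses this; the paper must extend the extra involution $\sigma$ to an involution $J'$ of $L_{2,26}$ and impose conditions on the embedding ($v\in E_6^{J,-}$, $J'(r)\neq -r$ for $r\in R_\ell$) to force modularity (Lemmas~\ref{lem:modularforbiggergroup} and~\ref{lem:pexists}), and these conditions in turn constrain which embeddings are admissible. Second, your claim that the quasi-pullback "automatically vanishes along the ramification divisor" is not correct as stated: for $d\equiv 2\bmod 6$ it requires checking that the GHS argument for K3 lattices transfers (it does, because the discriminant group, rank and signature agree), but for $d\equiv 0\bmod 6$ the larger group produces additional reflective divisors (vectors with $(r,r)=\pm 6$ and $\mathrm{div}(r)=3$ or $6$), and the vanishing there is a separate, delicate computation (\S\ref{ss:ramification}).

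Third, and most substantively, you give no mechanism for producing the required embeddings for \emph{all} $n$ in an infinite range; "exhibit $K$ by hand via explicit embeddings of known root sublattices" cannot cover $n>18$ uniformly, and unlike the K3 case there is no finite list of Niemeier-type complements to consult, since the relevant complement depends on $n$. The paper's solution occupies most of its length: the lattice $K_d^\perp(-1)\isom B_n\oplus U\oplus E_8(-1)^{\oplus 2}$ is embedded by placing $B_n$ into $U\oplus E_8(-1)$ with $\ell=\alpha e+\beta f+v$ and $\sqrt{(1+\epsilon)n}<\alpha,\beta<\sqrt{5n/4}$ to kill all but "Type I" roots; then for $d\equiv 2\bmod 6$ the existence of a good $v$ is established by an averaging argument comparing coefficients of theta series of the dual $\Q$-lattices $M_1\cap T$, $M_2\cap T$, $M_3\cap T$ (Theorem~\ref{thm:reduction}), with explicit local-density estimates valid for $n\geq 1207$ and a computer search below that; for $d\equiv 0\bmod 6$ it is reduced to writing $\alpha\beta-n$ as a sum of three nonzero squares, using a strengthening of Legendre's theorem and a computer search for $n<5636$. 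Without some argument of this kind your proposal establishes nothing for large $n$.
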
	

	\begin{remark}
		In the notation of Theorem~\ref{thm:MainThm}, the results mentioned above say $\calC_d$ has negative Kodaira dimension for $d = 6n + 2$ and $1\leq n \leq 7$, as well as for $d = 6n$ and $2\leq n \leq 7$.  All together, these results and Theorem~\ref{thm:MainThm} leave only $20$ values of $d$ for which we cannot yet say anything about the Kodaira dimension of $\calC_d$. 
	\end{remark}

%%%%%%%%%%%%%%%%%%%%%%%%%%%%%%%%%%%%%%%%%%%%%%%%%%%%

\subsection{Connection to the arithmetic of K3 surfaces}
	
	In joint work with McKinnie and Sawon~\cite[\S2.7]{MSTVA}, we showed that if $p \equiv 2 \bmod 3$ is prime, then there is a dominant morphism of quasi-projective varieties $\calC_{2p^2} \to \calK_2$. Furthermore, a very general point of $\calC_{2p^2}$ parametrizes a K3 surface $S$ of degree $2$ and geometric Picard rank $1$, \emph{together} with a nontrivial subgroup of order $p$ in the Brauer group $\Br S$. Suspecting that if $S$ is defined over $\Q$ then there should be only a few primes $p$ for which nontrivial classes in $(\Br S_{\C})[p]$ descend to $\Q$, we asked ourselves: is $\calC_{2p^2}(\Q) = \emptyset$ for $p \gg 0$?

	This question seems to be currently out of reach. The Bombieri-Lang conjectures, however, predict that if $\calC_{2p^2}$ is of general type, then rational points on $\calC_{2p^2}$ should be few and far between, at best. Our results imply that $\calC_{2p^2}$ is of general type for $p \geq 11$. We thus offer the following question as a challenge.

	\begin{question}
		Does there exist a K3 surface $S/\Q$ of degree $2$ with geometric Picard rank $1$ such that $(\Br S)[p] \neq 0$ for some prime $p \geq 11$?
	\end{question}
	
	Not all subgroups of order $p$ in the transcendental Brauer group of a K3 surface of degree $2$ are parametrized by points on some $\calC_{2p^2}$. Some, for example, are parametrized by points on $\calK_{2p^2}$ through a dominant morphism $\calK_{2p^2} \to \calK_2$ by work of Mukai (see~\cite{Mukai},\cite[\S2.6]{MSTVA}). However, the results of~\cite{GHSInventiones} show that $\calK_{2p^2}$ is also of general type for $p \geq 11$.
	
%%%%%%%%%%%%%%%%%%%%%%%%%%%%%%%%%%%%%%%%%%%%%%%%%%%%

\subsection{Relation to orthogonal modular varieties}
	\label{ss:modvars}

	The proof of Theorem~\ref{thm:MainThm} builds on several techniques developed by Gritsenko, Hulek, and Sankaran over the last ten years~\cites{GHSInventiones,GHSCompositio,GHSDifferential,GHSHandbook}. We explain the basic set-up.
	
	Let $L$ be an even lattice of signature $(2,m)$ with $m \geq 9$. Write
	\[
		\calD_{L} = \{ [x] \in \PP(L\otimes \C) : (x,x) = 0, (x,\bar x) > 0\}^+
	\]
for one of the two components of its associated period domain, and let $\calD^\bullet_L$ denote the affine cone of $\calD_L$. Write $O^+(L)$ for the subgroup of the orthogonal group of $L$ that preserves $\calD_{L}$, and $\widetilde{O}^+(L)$ for the intersection of $O^+(L)$ with the stable orthogonal group $\widetilde{O}(L)$ of $L$ (see~\S\S\ref{ss:Lattices} and~\ref{ss:PeriodDomains} for details).  For example, the nonspecial cohomology $K_d^\perp(-1) \subseteq \HH^4(X,\Z)(-1)$ of a cubic fourfold $X$ is an even lattice of signature $(2,19)$.
	
	Our point of departure is the observation that $\calC_d$ is birational to the orthogonal modular variety $\Gamma \backslash \calD_{K_d^\perp(-1)}$ for an appropriate choice of finite index subgroup $\Gamma$ of $O^+(K_d^\perp(-1))$; see~\S\ref{ss:ModuliSpecialFourfolds} for details on the nature of $\Gamma$. This observation allows us to apply the ``low-weight cusp form trick'' of Gritsenko, Hulek and Sankaran (\S\ref{ss:LowWeightCusp}), and thus reduce Theorem~\ref{thm:MainThm} to finding a single cusp form of weight $< 19$ and character $\det$, with respect to the group $\Gamma$, that vanishes along the ramification divisor of the modular projection $\pi\colon \calD_{K_d^\perp(-1)} \to \Gamma \backslash \calD_{K_d^\perp(-1)}$.

%%%%%%%%%%%%%%%%%%%%%%%%%%%%%%%%%%%%%%%%%%%%%%%%%%%%

\subsection{From cusp forms to lattice embeddings}
	\label{ss:cusplat}

	There is a strategy, originating in work of Kond{\=o}~\cite{KondoII} and brought to maturity in~\cite{GHSHandbook}, to find the needed low-weight cusp form. Let $L_{2,26} = U^{\oplus 2}\oplus E_8(-1)^{\oplus 3}$ be the \defi{Borcherds lattice}; here $U$ denotes a hyperbolic plane and $E_8(-1)$ is the negative definite version of the $E_8$ lattice. In~\cite{Borcherds}, Borcherds introduced a nonzero modular form $\Phi_{12}\colon \calD^\bullet_{L_{2,26}} \to \C$ of weight $12$ and character $\det$ with respect to the stable orthogonal group $\widetilde{O}^+(L_{2,26})$ of $L_{2,26}$. A primitive embedding of lattices $K_d^\perp(-1)\hookrightarrow L_{2,26}$ induces an inclusion of affine cones $\calD^\bullet_{K_d^\perp(-1)} \hookrightarrow \calD^\bullet_{L_{2,26}}$, so one might hope to restrict $\Phi_{12}$ to $\calD^\bullet_{K_d^\perp(-1)}$ to get the desired cusp form. However, the form $\Phi_{12}$ vanishes, with multiplicity $1$, along $\calD_{\langle r\rangle^\perp}^\bullet$ for any $r \in L_{2,26}$ such that $r^2 = -2$. If $(r,K_d^\perp(-1)) = 0$, then $\Phi_{12}|_{\calD^\bullet_{K_d^\perp(-1)}}$ will also vanish identically. Thus, we are forced to divide $\Phi_{12}$ by linear functions parametrized by \emph{pairs} of roots in $L_{2,26}$ orthogonal to $K_d^\perp(-1)$. The resulting form
	\[
		\Phi|_{K_d^\perp(-1)} := \frac{\Phi_{12}(Z)}{\displaystyle \prod_{r \in R_{-2}(K_d^\perp(-1))/\{\pm 1\}}(Z,r)}\Bigg|_{\calD^\bullet_{K_d^\perp(-1)}}
	\]
is a cusp form for $\widetilde{O}^+(K_d^\perp(-1))$ provided $N := \#R_{-2}(K_d^\perp(-1))/\{\pm 1\} > 0$, often referred to as a \defi{quasi-pullback} of $\Phi_{12}$ (see~\cite[\S8]{GHSHandbook}). The form $\Phi|_{K_d^\perp(-1)}$ has weight $12 + N$; the low-weight cusp form trick requires that $N < 7$, leaving us with the problem that occupies the bulk of the paper:

	\begin{problem}
		\label{prod:embedding}
		Construct a primitive embedding $K_d^\perp(-1) \hookrightarrow L_{2,26}$ in such a way that
		\[
			0 < \#\{r \in L_{2,26} : r^2 = -2, (r,K_d^\perp(-1)) = 0\} < 14.
		\]
	\end{problem}	
	
%%%%%%%%%%%%%%%%%%%%%%%%%%%%%%%%%%%%%%%%%%%%%%%%%%%%

\subsection{Contributions}
	\label{ss:contributions}

Solving problems akin to Problem~\ref{prod:embedding} requires several major developments in~\cites{GHSInventiones,GHSCompositio,GHSDifferential,GHSHandbook}, but the class of orthogonal varieties we consider presents additional obstacles not encountered in previous problems of this type.

	There is a lattice isomorphism $K_d^\perp(-1) \isom B_n \oplus U \oplus E_8(-1)^{\oplus 2}$, where $B_n$ is the rank three lattice of signature $(1,2)$ with intersection pairing given by
	\[
		\begin{pmatrix}
			-2 &  1 &  0 \\
			1  & -2 &  \eps \\
			0  &  \eps & 2n
		\end{pmatrix}
		\quad\textrm{and}\quad
		\eps = 
		\begin{cases}
			0 & \textrm{if } d = 6n, \\
			1 & \textrm{if } d = 6n + 2,
		\end{cases}
	\]
(see~\S\ref{s:TheLattice}). Results of Nikulin reduce Problem~\ref{prod:embedding} to embedding $B_n \hookrightarrow U\oplus E_8(-1)$ with image orthogonal to only a few vectors of length $-2$ in $U\oplus E_8(-1)$. This task is more strenuous than the analogous embedding problem in~\cite[\S7]{GHSInventiones}, for several reasons.
	\smallskip
	\begin{enumerate}
		\item The lattice $U \oplus E_8(-1)$ is indefinite, so there is no \emph{a priori} bound, independent of $n$, for the number of vectors of length $-2$ in $U\oplus E_8(-1)$ orthogonal to the image of $B_n$ (signature considerations do imply that the number of such vectors is finite). The calculations in \S\ref{S:latticeEng} give rise to a class of embeddings for which we can certify an upper bound that is independent of $n$, at least for sufficiently large $n$.
		\smallskip
	\end{enumerate}
	When $d = 6n + 2$, we reduce the embedding problem to the positivity of the coefficients of a difference of theta series, as in~\cite[\S7]{GHSInventiones}. Two difficulties arise in our case:
	\smallskip
	\begin{enumerate}\setcounter{enumi}{1}
		\item We must consider some lattices of odd rank, making the problem of estimating the size of the coefficients of their theta functions a difficult one. Fortunately, the odd-rank lattices we consider are unique in their genus, so later work of Gritsenko, Hulek and Sankaran~\cite[\S5]{GHSCompositio}, and Yang~\cite{Yang} can be applied to control these coefficients.
		\smallskip
		\item We lack explicit \emph{closed-form} bases for the spaces of modular forms containing the theta series we consider. This forces a delicate asymptotic analysis (\S\S\ref{ss:M1Bound}--\ref{ss:ProofThm1.1}) to prove that $\calC_d$ is of general type for $n \geq 1207$, leaving a tractable linear algebra problem that a computer can handle for the remaining smaller values of $n$.  
		\smallskip
	\end{enumerate}

	A new feature in the case $d\equiv 2 \bmod 6$ is the prominent use of dual $\Q$-lattices to home in on good embeddings $K_d^\perp(-1) \hookrightarrow L_{2,26}$. See, for example, the statement of Problem~\ref{prob:lattices} and the types of lattices involved in Theorem~\ref{thm:reduction}. Dual $\Q$-lattices give us theoretical and computational leverage in proving Theorem~\ref{thm:MainThm}. Theoretically, we use them to refine estimates for the size of coefficients of the theta functions that we need (\S\S\ref{ss:M1Bound}--\ref{ss:ProofThm1.1}). Computationally, they allow us to efficiently search for embeddings when $n < 1207$; the required calculations are feasible on a single desktop machine over the course of a few days\footnote{In addition, \emph{verifying} the result of the calculations takes only a few seconds on a desktop machine.}. \\

The case $d \equiv 0 \bmod 6$ has a purely lattice-theoretic proof, i.e., no theta functions are involved, but it presents a different set of challenges, the most important of which is akin to that in~\cite{GHSDifferential}: the modular group $\Gamma$ in this case contains $\widetilde{O}^+(K_d^\perp(-1))$ as a \emph{subgroup of index $2$}, so the cusp form $\Phi|_{K_d^\perp(-1)}$ is not \emph{a priori} modular with respect to the correct group. Our solution to this problem for large $d$ requires exhibiting a vector $v \in E_6$ with certain properties (see Lemmas~\ref{lem:modularforbiggergroup} and~\ref{lem:pexists}), whose length can be expressed as the sum of three \emph{nonzero} squares. To this end, we use a strengthening of Legendre's three-squares theorem. This strengthening, however,  requires the Generalized Riemann Hypothesis (GRH). We circumvent GRH by keeping small the number of $\calC_d$ whose Kodaira dimension we must calculate using a computer; see the proof of Theorem~\ref{thm:Dis0mod6}.
	
%%%%%%%%%%%%%%%%%%%%%%%%%%%%%%%%%%%%%%%%%%%%%%%%%%%%

\subsection{Related work}

	Besides results of Hassett, Lai, and Nuer already mentioned, and results implied by work of Gritsenko, Hulek, and Sankaran, it is worth mentioning a recent remarkable result of Ma~\cite{Ma}. Using Gritsenko's method of Jacobi lifting~\cite{Gritsenko}, Ma shows that all but finitely many stable orthogonal modular varieties are of general type.  In particular, his result implies that $\calC_d$ is of general type for $d \gg 0$ when $d \equiv 2 \bmod 6$. Ma asserts that ``with a huge amount of computation'' his result could be made effective.  We have not attempted to do this because the case $d \equiv 0 \bmod 6$ is not covered by his proof.

	One might have hoped that Theorem~\ref{thm:MainThm} could shed light on the Kodaira dimension of the moduli of polarized K3 surfaces $\calK_d$ for some values of $d$ not covered by~\cite{GHSInventiones}, because our proof does not rely on the specific embeddings used in~\cite{GHSInventiones}. Alas, our results are compatible on the nose with those of~\cite{GHSInventiones}.

%%%%%%%%%%%%%%%%%%%%%%%%%%%%%%%%%%%%%%%%%%%%%%%%%%%%

\subsection*{Outline}

	In \S\ref{s:Preliminaries}, we set up notation and review tools necessary for the proof of Theorem~\ref{thm:MainThm}. After fixing some terminology in \S\ref{ss:Lattices}, we use \S\ref{ss:reps} to review some results in~\cite{GHSCompositio} and~\cite{Yang} on the representation of integers by quadratic forms needed for the case $d \equiv 2 \bmod 6$. In \S\S\ref{ss:PeriodDomains}--\ref{ss:ModuliSpecialFourfolds} we discuss moduli of cubic fourfolds and the birational map $\calC_d \dasharrow \Gamma \backslash \calD_{K_d^\perp(-1)}$ mentioned in~\S\ref{ss:modvars}. In~\S\ref{ss:LowWeightCusp} we explain the low-weight cusp form trick that reduces Theorem~\ref{thm:MainThm} to Problem~\ref{prod:embedding}.
	
	In \S\ref{s:TheLattice} we show that $K_d^\perp(-1)$ is isomorphic to the lattice $B_n\oplus U \oplus E_8(-1)^{\oplus 2}$, and in \S\ref{S:latticeEng} we explain some natural constraints on the embedding $B_n \hookrightarrow U \oplus E_8(-1)$ that allow us to control the type and total number of $(-2)$-vectors orthogonal to the image of $B_n$.
	
	We treat the case $d = 6n + 2$ in~\S\ref{s:2mod6}. After introducing certain dual $\Q$-lattices that make tractable the problem of embedding $K_d^{\perp}(-1)$ into $L_{2,26}$, we prove Theorem~\ref{thm:reduction}, a result in the spirit of~\cite[Theorem~7.1]{GHSInventiones}, which reduces the problem for large $d$ to an inequality of coefficients of certain theta functions. The asymptotic analysis in \S\S\ref{ss:M1Bound}--\ref{ss:ProofThm1.1} is then used to show this inequality is satisfied for $n \geq 1207$ in Theorem~\ref{thm:Dis2mod6}. In the proof of Theorem~\ref{thm:Dis2mod6} we also outline the computation that exhibits the necessary embedding for the values $n < 1207$ appearing in Theorem~\ref{thm:MainThm}.
	
	Finally, we treat the case $d = 6n$ in \S\ref{s:0mod6}. In~\S\ref{ss:embeddingLwithprops} we give a criterion ensuring the modular form $\Phi|_{K_d^\perp(-1)}$ is modular with respect the correct monodromy group (Lemma~\ref{lem:modularforbiggergroup}). We also give a flexible criterion meant to ensure that the embedding $K_d^{\perp}(-1) \hookrightarrow L_{2,26}$ is primitive when $d = 6n$ (Lemma~\ref{lem:primitivity}); this criterion is essential to handle small values of $n$ in Theorem~\ref{thm:MainThm}, particularly $n < 44$. Lemma~\ref{lemma:lessthan14} shows that the criterion ensuring $\Phi|_{K_d^\perp(-1)}$ is modular with respect the correct group also ensures that the image of $K_d^\perp(-1)$ in $L_{2,26}$ is orthogonal to only a few $(-2)$-vectors. This allows us to complete the embedding analysis in Theorem~\ref{thm:Dis0mod6}. In \S\ref{ss:ramification} we check that the form $\Phi|_{K_d^\perp(-1)}$ vanishes along the ramification divisor of the orthogonal modular projection, completing the proof of Theorem~\ref{thm:MainThm}.

%%%%%%%%%%%%%%%%%%%%%%%%%%%%%%%%%%%%%%%%%%%%%%%%%%%%

\subsection*{Acknowledgements}

	We are grateful to Brendan Hassett for many conversations where he patiently answered our questions, and for his constant encouragement. We thank Klaus Hulek for an illuminating discussion at the Simons Symposium ``Geometry over non-closed Fields'' in March of 2015. We also thank an anonymous referee for their careful reading of the manuscript, and for pertinent suggestions that improved the exposition of the paper. Tanimoto was partially supported by Lars Hesselholt's Niels Bohr Professorship. V\'arilly-Alvarado was supported by NSF grant DMS-1103659 and NSF CAREER grant DMS-1352291. Computer calculations were carried out in {\tt Magma}~\cite{Magma}.
	
%%%%%%%%%%%%%%%%%%%%%%%%%%%%%%%%%%%%%%%%%%%%%%%%%%%%

\section{Preliminaries}
\label{s:Preliminaries}

%%%%%%%%%%%%%%%%%%%%%%%%%%%%%%%%%%%%%%%%%%%%%%%%%%%%

\subsection{Lattices}
\label{ss:Lattices}

	By a \defi{lattice} $L$ we mean a free abelian group of finite rank endowed with a symmetric, bilinear, nondegenerate integral pairing $(\cdot\,,\cdot)\colon L \times L \to \Z$; if we allow the pairing to take values in $\Q$ instead of $\Z$, then we refer to $L$ as a \defi{$\Q$-lattice}. We write $O(L)$ for the group of orthogonal transformations of a lattice $L$.  For $m \in \Z$, we denote by $L(m)$ the lattice whose underlying abelian group coincides with that of $L$, and whose pairing is that of $L$ multiplied by $m$. For an inclusion of ($\Q$-)lattices $L\subseteq M$, we write $L^\perp_M$ for the orthogonal complement of $L$ in $M$. The \defi{length} of a vector $x$ in a ($\Q$-)lattice $L$ is the quantity $(x,x)$.

	Let $L^\vee = \Hom(L,\Z)$ denote the dual abelian group of a lattice $L$; it is a $\Q$-lattice.  We use the inclusion $L \to L^\vee$ given by $\lambda \mapsto [\mu \mapsto (\lambda,\mu)]$ to define the \defi{discriminant group} $D(L) = L^\vee/L$ of $L$; it is a finite abelian group. If $L$ is \defi{even}, i.e., if $(x,x) \in 2\Z$ for all $x \in L$, then define the \defi{discriminant form} of $L$ by
	\begin{equation*}
		q_L\colon D(L) \to \Q/2\Z, \qquad
		x + L \mapsto (x,x) \bmod 2\Z.
	\end{equation*}
Write $O(D(L))$ for the orthogonal group of this quadratic form, and $\widetilde O(L)$ for the \defi{stable orthogonal} group of $L$, i.e., the kernel of the natural map
	\[
		O(L) \to O(D(L)).
	\]

	The \defi{theta series} associated to a definite $\Q$-lattice is the generating function
	\[
		\Theta_L(q) = \sum_{x \in L}q^{( x,x)} = \sum_m N_L(m) q^{m}
	\]
that counts the number $N_L(m)$ of vectors $x \in L$ of given length $( x,x) = m$. If $B$ is a Gram matrix for a chosen basis of $L$, then we sometimes write $N_B(m)$ instead of $N_L(m)$ (these numbers do not depend on the chosen basis).

%%%%%%%%%%%%%%%%%%%%%%%%%%%%%%%%%%%%%%%%%%%%%%%%%%%%

\subsection{Representation of integers by lattices}
\label{ss:reps}

	Let $L$ be an even positive definite lattice of rank $r$. Let $B$ denote the Gram matrix corresponding to a chosen basis for $L$, and write $|B|$ for the determinant of $B$. Let $S(x) = \frac{1}{2} x^t B x$ be the corresponding quadratic form, with $x \in \Z^r$. If $S$ is unique in its genus and $r \geq 3$, then the number $r(t, S) = N_B(t)$ of representations of $t\in \Z$ by $S$ is a product of local densities
	\begin{equation}
		\label{eq:prodlocaldensity}
		r(t, S) = \prod_{p \leq \infty} \alpha_p(t,S)
	\end{equation}
	where
	\[
		\alpha_p(t,S) = \lim_{a\to\infty} p^{-a(r-1)}\#\{ x \in (\Z/p^a\Z)^r : S(x) \equiv t \bmod p^a \},
	\]
	for $p$ a finite prime, and
	\[
		\alpha_\infty(t,S) = (2\pi)^{r/2}\Gamma\left(\frac{r}{2}\right)^{-1}t^{r/2 - 1}|B|^{-1/2}.
	\]

	The following theorem helps compute the right hand side of~\eqref{eq:prodlocaldensity}, depending on the parity of the rank~$r$ of $B$. We need  the \defi{Zagier $L$-function}, which for $\Delta \equiv 0, 1 \bmod 4$ is defined by
	\[
		L(s, \Delta) = \frac{\zeta(2s)}{\zeta(s)} \sum_{n=1}^{\infty} \frac{b_n(\Delta)}{n^s},
	\]
where $b_n(\Delta) = \#\{ x \bmod 2n: x^2 \equiv \Delta \bmod 4n \}$.

	\begin{theorem}
		\label{thm:representationsbyintegers}
		For $x \in \Z^r$, let $S(x) = \frac{1}{2} x^t B x$ be the quadratic form corresponding to a symmetric even integral positive definite $r\times r$ matrix $B$. Assume that $S$ is unique in its genus. Decompose a fixed integer $t$ as a product $t = t_Bt_1t_2^2$, where $t_1$ is square free, $(t_1t_2^2,|B|) = 1$, and $t_B$ divides some power of $|B|$. Put
		\[
			D = 
			\begin{cases}
				\disc\left( \Q\left(\sqrt{(-1)^{\frac{r-1}{2}}2t|B|}\right) \right) & \text{if $r$ is odd;}\\
				(-1)^{r/2}|B| & \text{if $r$ is even.}
			\end{cases}
		\]
Let $\chi_D(p) = \big(\frac{D}{p}\big)$ and $\chi_{4D}(a) = \big(\frac{4D}{a}\big)$ denote the usual Jacobi symbols. Then
		\[
			r(t,S) = 
			\begin{cases}
				\begin{aligned}
					(2\pi)^{r/2}&\Gamma\left(\frac{m}{2}\right)^{-1}t^{r/2 - 1}|B|^{-1/2}L\left(\frac{r-1}{2},Dt_2^2\right)\zeta(r-1)^{-1} \\
					& \quad\times \prod_{p\, \mid\, |B|}\frac{1 - \chi_D(p)p^{(1 - r)/2}}{1 - p^{1-r}}\alpha_p(t,S),
				\end{aligned}
				&  \text{if $r$ is odd;}\\
				\displaystyle\bigg(\sum_{a \mid t}\chi_{4D}(a)a^{1 - (r/2)}\bigg)L\left(\frac{r}{2},\chi_{4D}\right)^{-1}\alpha_\infty(t,S)\prod_{p \mid 2D}\alpha_p(t,S)
				& \text{if $r$ is even.}
			\end{cases}
		\]
Here $L\left(\frac{r-1}{2},Dt_2^2\right)$ denotes the Zagier $L$-function, and $L\left(\frac{r}{2},\chi_{4D}\right)$ is a Dirichlet $L$-function.
	\end{theorem}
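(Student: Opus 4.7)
The plan is to start from Siegel's mass formula for quadratic forms and then massage the resulting Euler product of local densities into the shape asserted on the right-hand side. Since $S$ is assumed unique in its genus and $r \geq 3$, Siegel's theorem identifies the representation number $r(t,S)$ with the Siegel mass, which is exactly the product $\prod_{p\leq \infty}\alpha_p(t,S)$ in~\eqref{eq:prodlocaldensity}. The Archimedean factor $\alpha_\infty(t,S)$ is computed once and for all from the standard gamma-function volume calculation, so the content of the theorem is the evaluation of the finite Euler product.

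Next I would split the finite primes into two families: those dividing $2|B|$ (the ``bad'' primes), which we are content to keep in the answer as $\alpha_p(t,S)$, and those coprime to $2|B|$ (the ``good'' primes), where a standard closed formula for $\alpha_p(t,S)$ is available depending on the parity of $r$. Concretely, for a good prime $p$ one has, up to well-known $p$-adic factors coming from $\ord_p(t_1t_2^2)$,
\[
\alpha_p(t,S) \;=\; \frac{1 - \chi_{4D}(p) p^{-r/2}}{1 - p^{-r}} \cdot (\text{local $p$-part of } \sigma)
\]
in the even rank case, where $\chi_{4D}$ is the Kronecker symbol attached to $D=(-1)^{r/2}|B|$, and an analogous expression with half-integral exponent and the quadratic field discriminant $D=\disc(\Q(\sqrt{(-1)^{(r-1)/2}2t|B|}))$ in the odd rank case; this is the point where the Zagier $L$-function $L\bigl(\tfrac{r-1}{2},Dt_2^2\bigr)$ enters, since its defining Dirichlet series with coefficients $b_n(Dt_2^2)/n^s$ is precisely what packages the good-prime contribution for half-integral weight forms.

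With this decomposition in hand, the next step is to recognize the global Euler product. In the even case, running the product over all good primes $p$ gives a quotient of zeta/$L$-series evaluations; the numerator collapses to $L(r/2,\chi_{4D})^{-1}$ after moving the ``missing'' Euler factors at $p\mid 2D$ over to the bad-prime product, while the sum over divisors $\sum_{a\mid t}\chi_{4D}(a) a^{1-r/2}$ arises as the twisted divisor sum that records the variable part of $\alpha_p(t,S)$ at good primes according to $\ord_p(t)$. In the odd case, the same reorganization turns the good primes into $L\bigl(\tfrac{r-1}{2},Dt_2^2\bigr)\zeta(r-1)^{-1}$, with the product $\prod_{p\mid |B|}\frac{1-\chi_D(p)p^{(1-r)/2}}{1-p^{1-r}}\alpha_p(t,S)$ collecting everything that was removed at the bad primes.

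The principal obstacle is the bookkeeping at the bad primes: the closed-form ratios above only hold for primes coprime to $2|B|t$, whereas the $L$-functions are defined by Euler products over \emph{all} primes. One must therefore carefully insert and remove the ``missing'' Euler factors at $p\mid 2|B|$ so that the global $L$-functions appear intact, compensating by leaving $\alpha_p(t,S)$ in the answer at exactly those primes, and (in the odd case) by the correction factor $\prod_{p\mid |B|}(1-\chi_D(p)p^{(1-r)/2})(1-p^{1-r})^{-1}$. Once this accounting is done, the two displayed formulas drop out. I would not attempt the local density computations at ramified and dyadic primes in detail, since those are exactly what the residual factors $\alpha_p(t,S)$ encode; the theorem as stated is precisely the reformulation of Siegel's formula that isolates the transcendental $L$-values from the finite lattice-theoretic data.
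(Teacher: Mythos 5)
Your plan is sound, but you should know that the paper does not actually prove this theorem: its entire proof is the citation ``See~\cite[Theorem~5.1]{GHSCompositio} for $r$ odd, and~\cite[(11.74)]{Iwaniec} for $r$ even.'' What you have written is essentially a reconstruction of the argument those references carry out --- Siegel's theorem applied to a genus with a single class, closed-form evaluation of the local densities at primes coprime to $2|B|$, and reassembly of the good-prime Euler product into $L\left(\frac{r}{2},\chi_{4D}\right)^{-1}$ times a twisted divisor sum (even $r$) or into $L\left(\frac{r-1}{2},Dt_2^2\right)\zeta(r-1)^{-1}$ (odd $r$), with the bad primes left as residual factors $\alpha_p(t,S)$. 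The two genuinely hard inputs in your outline are asserted rather than proved: (i) Siegel's identity $r(t,S)=\prod_{p\le\infty}\alpha_p(t,S)$ for class-number-one genera, and (ii) the closed form for $\alpha_p(t,S)$ at good primes, in particular the identification, in the odd-rank case, of the good-prime Dirichlet series with the coefficients $b_n(Dt_2^2)$ defining the Zagier $L$-function. Both are substantial theorems that in practice you would have to cite (Siegel; and Kitaoka or~\cite{Yang} for the local densities), so your route and the paper's converge on the same sources; the difference is only that you make the Euler-product bookkeeping explicit, which has the virtue of explaining why the correction factor $\prod_{p\mid |B|}\left(1-\chi_D(p)p^{(1-r)/2}\right)\left(1-p^{1-r}\right)^{-1}$ must appear in the odd case.
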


	\begin{proof}
		See~\cite[Theorem~5.1]{GHSCompositio} for $r$ odd, and~\cite[(11.74)]{Iwaniec} for $r$ even.
	\end{proof}

	To apply Theorem~\ref{thm:representationsbyintegers}, we shall need estimates for the local densities $\alpha_p(t,S)$. We explain how to obtain such estimates, following Yang~\cite{Yang}. We assume throughout that $p^{-1}S$ is not half-integral. For $p \neq 2$, the quadratic form $S$ is $\Z_p$-equivalent to a diagonal form with Gram matrix
	\begin{equation}
		\label{eq:ZpNormalization}
		\diag(\epsilon_1 p^{l_1},\dots,\epsilon_m p^{l_m}) \quad\textrm{with } \epsilon_i \in \Z_p^\times, \textrm{ and } l_1\leq \cdots \leq l_m.
	\end{equation}
Since $p^{-1}S$ is not half-integral, we have $l_1 = 0$. For $k \in \Z_{\geq 0}$, set
	\[
		L(k,1) = \{1 \leq i \leq m : l_i - k < 0 \textrm{ is odd}\}\quad \textrm{and}\quad l(k,1) = \#L(k,1).
	\]
For $t = up^a$ with $u \in \Z_p^\times$ and $a \in \Z$, set
	\[
		f(t) = 
		\begin{cases}
			\displaystyle
			-\frac{{1}}{p} & \textrm{if $l(a+1,1)$ is even}, \\
			\displaystyle
			\left(\frac{u}{p}\right)\frac{1}{\sqrt{p}} & \textrm{otherwise}.
		\end{cases}
	\]
Finally, define
	\[
		d(k) = k + \frac{1}{2}\sum_{l_i < k}(l_i - k) 
		\quad\text{and}\quad
		v(k) = \left(\frac{-1}{p}\right)^{[l(k,1)/2]}\prod_{i \in L(k,1)}\left(\frac{\epsilon_i}{p}\right).
	\]
Then, by~\cite[p.\ 317, Proof of Theorem~3.1]{Yang} we have
	\[
		\alpha_p(t,S) = 1 + R_1(t,S),
	\]
where
	\begin{equation}
		\label{eq:R1pOdd}
			R_1(t,S) = (1 - p^{-1})\bigg(\sum_{\substack{0 < k \leq a \\ l(k,1) \textrm{ even}}}
			v(k) p^{d(k)}\bigg) + v(a+1)p^{d(a+1)}f(t).
	\end{equation}

	The case $p=2$ is more involved. Suppose that the quadratic form $S$ is $\Z_2$-equivalent to a quadratic form with Gram matrix
	\begin{equation}
		\label{eq:diagZ2}
		\diag(\epsilon_12^{l_1}, \cdots, \epsilon_L2^{l_L}) \oplus \left( \bigoplus_{i=1}^M \epsilon_i'2^{m_i } \begin{pmatrix}0 & \frac{1}{2} \\ \frac{1}{2} & 0\end{pmatrix} \oplus \bigoplus_{j=1}^N \epsilon_j''2^{n_j}\begin{pmatrix}1 & \frac{1}{2} \\ \frac{1}{2} & 1\end{pmatrix} \right).
	\end{equation}
For each integer $k>0$, we use the following notation:
	\begin{align*}
			L(k,1) &= \{1 \leq i \leq L : l_i - k < 0 \textrm{ is odd}\}\quad \textrm{and}\quad l(k,1) = \#L(k,1), \\
			p(k) &= (-1)^{\sum_{n_j < k} (n_j-k)},\\
			\epsilon(k) &= \prod_{h \in L(k-1, 1)}  \epsilon_h,\\
			d(k) & = k+\frac{1}{2} \sum_{l_h < k-1} (l_h-k+1) + \sum_{m_j < k} (m_i-k) + \sum_{n_j <k} (n_j-k),\\
			\delta(k) &=
			\begin{cases} 
				0 & \text{if $l_h = k-1$ for some $h$,}\\
				1 & \text{otherwise;}
			\end{cases} \\
			\intertext{}
			\left(\frac{2}{x}\right) &=
			\begin{cases} 
				(2,x)_2 & \text{if $x \in \Z_2^\times$,}\\
				0 & \text{otherwise,}
			\end{cases} \\
			\Char_X(f) &= \textrm{the characteristic function of $f$ on a set $X$}
	\end{align*}
Then, by~\cite[Theorem~4.1]{Yang} we have
	\[
		\alpha_2(t,S) = 1 + R_1(t,S),
	\]
where, for $t = u2^a$ with $u \in \Z_2^\times$ and $a\in \Z$,
	\begin{equation}
		\label{eq:R1Z2}
		\begin{split}
			R_1(t,S) &= \sum_{\substack{l < k \leq a +3 \\ l(k-1,1) \textrm{ odd}}}
			\delta(k) p(k) \left(\frac{2}{\mu\epsilon(k)}\right)2^{d(k) - 3/2} \\
			&\qquad + \sum_{\substack{l < k \leq a +3 \\ l(k-1,1) \textrm{ even}}}
			\delta(k) p(k) \left(\frac{2}{\mu\epsilon(k)}\right)2^{d(k) - 1}e^{-2\pi i (\mu/8)}
			\Char_{4\Z_2}(\mu),
		\end{split}
	\end{equation}
and $\mu = \mu_k(t)$ is given by
	\[
		\mu_k(t) = u2^{a - k + 3} - \sum_{l_h < k-1}\epsilon_h.
	\]

%%%%%%%%%%%%%%%%%%%%%%%%%%%%%%%%%%%%%%%%%%%%%%%%%%%%

\subsection{Period domains}
\label{ss:PeriodDomains}

	Let $L$ be a lattice of signature $(2,m)$, with $m > 1$. Extending the pairing on $L$ by $\C$-linearity, we let
	\[
		\Omega_L = \{ [x] \in \PP(L\otimes \C) : (x,x) = 0, (x,\bar x) > 0\}
	\]
be the \defi{period domain} associated to $L$. The space $\Omega_L$ consists of two connected components $\calD_L$ and $\calD_L'$ interchanged by complex conjugation. We write $O^+(L)$ for the subgroup of elements in $O(L)$ that preserve $\calD_L$, and set
	\[
		\widetilde O^+(L) = \widetilde O(L) \cap O^+(L).
	\]

	Let $\Gamma \subset O^+(L)$ be a subgroup of finite index. We define the \defi{modular variety} of $L$ with respect to $\Gamma$ as
	\[
		\mathcal F_L(\Gamma) = \Gamma \backslash \calD_L.
	\]

%%%%%%%%%%%%%%%%%%%%%%%%%%%%%%%%%%%%%%%%%%%%%%%%%%%%

\subsection{Moduli of cubic fourfolds}

	In the next two subsections we introduce the modular varieties at the heart of this paper; a good reference for this material is Hassett's paper~\cite{HassettComp}.

	Two cubic fourfolds are isomorphic if and only if they are congruent under the natural action of $\SL_6$. Let $V \subseteq \PP^{55}$ denote the locus of smooth cubic fourfolds within the  projective space of all cubic fourfolds.  The GIT quotient $\calC := V//\SL_6$ is a coarse moduli space for smooth cubic fourfolds; it is a quasi-projective variety of dimension $20$.

	Let $X$ be a smooth complex cubic fourfold. Cup product makes the singular cohomology group $\HH^4(X,\Z)$ into a lattice, which is abstractly isomorphic to
	\[
		\Lambda := (+1)^{\oplus 21} \oplus (-1)^{\oplus 2};
	\]
see~\cite[Proposition~2.1.2]{HassettComp}. Let $h^2 \in \Lambda$ be a primitive vector with $(h^2,h^2) = 3$, and let 
	\[
		\Gamma = \{g \in O(\Lambda(-1)) : g(h^2) = h^2\}.
	\]
By~\cite[Proposition~2.1.2]{HassettComp}, the lattice 
	\[
		\Lambda_0(-1) := \langle h^2\rangle^\perp(-1) \subseteq \Lambda(-1)
	\]
is isomorphic to
	\[
		B(-1)\oplus U^{\oplus 2} \oplus E_8(-1)^{\oplus 2},
	\]
where $U$ is the hyperbolic plane, $E_8(-1)$ is the unique negative definite rank eight unimodular lattice, and $B$ is the rank two lattice with pairing
	\begin{equation}
		\label{eq:B}
		\begin{pmatrix}
			2 & 1 \\
			1 & 2
		\end{pmatrix}.
	\end{equation}
The lattice $\Lambda_0(-1)$ has signature $(2,20)$.  The group $\Gamma$ acts on $\Omega_{\Lambda_0(-1)}$; let $\Gamma^+ \subseteq \Gamma$ be the index two subgroup of $\Gamma$ that preserves the component $\calD_{\Lambda_0(-1)}$ of $\Omega_{\Lambda_0(-1)}$. The orbit space $\calD := \Gamma^+\backslash\calD_{\Lambda_0(-1)}$ is an analytic space, and also a quasi-projective variety of dimension $20$ by results of Baily and Borel~\cite{BB}.  It is called the \defi{global period domain} for smooth cubic fourfolds. There is a \defi{period map}
	\[
		\tau\colon \calC \to \calD,
	\]
which is an open immersion of complex analytic spaces~\cite{Voisin}, whose image was determined in~\cites{Looijenga,Laza}. The map $\tau$ is algebraic~\cite[Proposition~2.2.2]{HassettComp}, so we may view $\calC$ as a Zariski open subset of $\calD$.

%%%%%%%%%%%%%%%%%%%%%%%%%%%%%%%%%%%%%%%%%%%%%%%%%%%%

\subsection{Moduli of special cubic fourfolds}
	\label{ss:ModuliSpecialFourfolds}

	We define a rank two lattice $K_d \subseteq \Lambda$ whose discriminant has absolute value $d$, and which contains $h^2$, as follows.  We let $K_d = \langle h^2, T\rangle$, with intersection matrix
	\[
		\begin{pmatrix}
			3 & 0 \\
			0 & 2n
		\end{pmatrix}\quad\textrm{or}\quad
		\begin{pmatrix}
			3 & 1 \\
			1 & 2n+1
		\end{pmatrix}
	\]
according to whether $d = 6n$ for some $n \in \Z_{\geq 2}$, or $d = 6n + 2$ for some $n \in \Z_{\geq 1}$, respectively. The inclusion $K_d^\perp(-1) \subseteq \Lambda_0(-1)$ induces an inclusion $\calD_{K_d^\perp(-1)}\subseteq \calD_{\Lambda_0(-1)}$. Let
	\[
		\Gamma_d^+ = \{g \in \Gamma^+ : g(K_d) = K_d\},
	\]
and let $\overline{\Gamma}_d^+$ be the image of the restriction map
	\[
		\Gamma_d^+ \to O^+(K_d^\perp(-1)).
	\]
The group $\overline{\Gamma}_d^+$ has finite index in $O^+(K_d^\perp(-1))$: by~\cite[Corollary~1.5.2]{Nikulin}, the stable orthogonal group $\widetilde{O}^+(K_d^\perp(-1))$ includes in $\overline{\Gamma}_d^+$, and $\widetilde{O}^+(K_d^\perp(-1))$ has finite index in $O^+(K_d^\perp(-1))$. In fact, the groups $\widetilde{O}^+(K_d^\perp(-1))$ and $\overline{\Gamma}_d^+$ coincide when $d \equiv 2 \bmod 6$, and $\widetilde{O}^+(K_d^\perp(-1))$ is an index $2$ subgroup in $\overline{\Gamma}_d^+$ when $d \equiv 0 \bmod 6$; see~\cite[Proposition~5.2.1]{HassettComp}.

	The modular variety
	\[
		\calD_d := \overline{\Gamma}_d^+\backslash \calD_{K_d^\perp(-1)}
	\]
is called the global period domain for labelled special cubic fourfolds of discriminant $d$. The natural map $\calD_d \to \calD$ is birational onto its image, which contains the space $\calC_d$ as a Zariski open subset; see~\cite[p.\ 7]{HassettComp}.

%%%%%%%%%%%%%%%%%%%%%%%%%%%%%%%%%%%%%%%%%%%%%%%%%%%%

\subsection{The low-weight cusp form trick}
\label{ss:LowWeightCusp}

	Let $L$ be a lattice of signature $(2,m)$, with $m > 1$. Write $\calD_L^\bullet$ for the affine cone of $\calD_L$. For a subgroup $\Gamma \subseteq O^+(L)$ of finite index, and a character $\chi\colon\Gamma \to \C^\times$, a \defi{modular form of weight $k$ and character $\chi$} is a function $F\colon \calD_L^\bullet \to \C$ such that
	\smallskip
	\begin{enumerate}
		\item $F(tZ) = t^{-k}F(Z)$ for all $Z \in \calD_L^\bullet$ and all $t \in \C^\times$;
		\smallskip
		\item $F(\gamma\cdot Z) = \chi(\gamma)F(Z)$ for all $Z \in \calD_L^\bullet$ and all $\gamma \in \Gamma$;
	\end{enumerate}

	\begin{theorem}[{\cite[Theorem 1.1]{GHSInventiones}}]
		\label{thm:LowWeightCusp}
		Let $L$ be an integral lattice of signature $(2,m)$ with $m \geq 9$, and let $\Gamma \subseteq O^+(L)$ be a subgroup of finite index. The modular variety $\calF_L(\Gamma)$ is of general type if there exists a nonzero cusp form $F \in S_a(\Gamma,\chi)$ of weight $a < m$ and character $\chi \in \{1,\det\}$ that vanishes along the ramification divisor of the projection $\pi\colon \calD_L \to \calF_L(\Gamma)$.

		If $S_m(\Gamma,\det) \neq 0$ then the Kodaira dimension of $\calF_L(\Gamma)$ is nonnegative. \qed
	\end{theorem}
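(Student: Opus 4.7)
The approach is to exploit the standard dictionary between modular forms on the period domain $\calD_L$ and pluricanonical differentials on a smooth toroidal compactification $\overline{\calF}$ of $\calF_L(\Gamma)$, due to Freitag and Tai. Concretely, for $k \geq 1$ there is an inclusion
\[
\bigl\{ F \in M_{mk}(\Gamma,\det^k) : F \text{ vanishes to order } k \text{ along the boundary and along the ramification divisor}\bigr\} \hookrightarrow H^0\bigl(\overline{\calF}, kK_{\overline{\calF}}\bigr),
\]
and this is an isomorphism provided the interior quotient singularities of $\calF_L(\Gamma)$ (coming from stabilizers of fixed loci of codimension $\geq 2$) satisfy the Reid--Tai criterion, so that pluricanonical forms extend without extra conditions. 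The hypothesis $m \geq 9$, together with Tai's analysis of possible stabilizer types, is what makes this extension work. Observe that for a reflection $\gamma \in \Gamma$ one has $\det(\gamma) = -1$, so forms of character $\det^k$ with $k$ odd automatically vanish along the reflection fixed loci; the genuine vanishing condition along the ramification divisor only bites when $k$ is even.

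\medskip

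For the general-type statement, I would use the multiplication map
\[
M_{(m-a)N}\bigl(\Gamma, \chi^N \det^N\bigr) \longrightarrow H^0\bigl(\overline{\calF}, N K_{\overline{\calF}}\bigr), \qquad G \longmapsto F_0^N \cdot G,
\]
where $F_0 \in S_a(\Gamma,\chi)$ is the given low-weight cusp form. The product $F_0^N \cdot G$ has weight $mN$ and character $\det^N$; since $F_0$ is a cusp form, $F_0^N$ vanishes to order $N$ along the boundary, and by hypothesis on $F_0$ it vanishes to order $\geq N$ along the ramification divisor. Thus $F_0^N \cdot G$ satisfies the vanishing requirements and determines an element of $H^0(NK_{\overline{\calF}})$. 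By Hirzebruch--Mumford proportionality, $\dim M_{(m-a)N}(\Gamma,\chi^N\det^N)$ grows asymptotically like a positive constant times $N^m$ as $N \to \infty$, and here the strict inequality $a < m$ is essential: it is precisely what forces the auxiliary weight $(m-a)N$ to tend to infinity, placing us in the asymptotic regime where Hirzebruch--Mumford applies. Consequently $h^0(NK_{\overline{\calF}}) \gtrsim N^m$, so $\overline{\calF}$ has maximal Kodaira--Iitaka dimension, i.e., is of general type.

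\medskip

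The main obstacle, as usual in this circle of ideas, is verifying that sections built from modular forms extend cleanly to a smooth toroidal compactification. At the boundary one compares $K_{\overline{\calF}}$ with the Hodge line bundle using the combinatorics of the toroidal cones, so that the cusp-form condition translates exactly into the required vanishing along boundary components. In the interior, the Reid--Tai criterion has to be verified for every finite subgroup of $\Gamma$ stabilizing a locus of codimension $\geq 2$; this is the technical heart of the argument, and is where the assumption $m \geq 9$ is used in full force through Tai's classification of admissible stabilizer types. Making the proportionality estimate effective (as opposed to merely asymptotic) is not needed here, but would also go through from the same input.

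\medskip

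For the second statement, a nonzero $F \in S_m(\Gamma,\det)$ has weight equal to $\dim \calF$, character $\det$, vanishes along the boundary (as a cusp form), and vanishes along the reflection divisor (since $k=1$ is odd and $\det(\gamma) = -1$ on reflections). Applying the same dictionary -- again invoking Reid--Tai at interior singularities -- yields a nonzero element of $H^0(K_{\overline{\calF}})$. Hence $p_g(\overline{\calF}) \geq 1$, which forces the Kodaira dimension of $\calF_L(\Gamma)$ to be nonnegative.
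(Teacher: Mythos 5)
The paper does not actually prove this statement: it is quoted from Gritsenko--Hulek--Sankaran \cite{GHSInventiones}*{Theorem 1.1} and the \verb|\qed| marks it as cited, not proved. Measured against the known proof in that reference, your sketch follows the intended route essentially step for step: a toroidal compactification, the Freitag-style identification of weight-$mk$, character-$\det^k$ forms with prescribed vanishing with $k$-pluricanonical sections, canonical interior singularities for $m \geq 9$ via Reid--Tai, and Hirzebruch--Mumford proportionality applied to the auxiliary factor of weight $(m-a)N$, which tends to infinity precisely because $a < m$. So the overall architecture is correct and is the standard one.

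One point needs tightening, and it matters for your proof of the second statement. The ramification divisor of $\pi$ is the union of the divisors $\calD_r$ for which $\sigma_r \in \Gamma$ \emph{or} $-\sigma_r \in \Gamma$. Your parity remark (``forms of character $\det^k$ with $k$ odd automatically vanish along the reflection fixed loci'') only covers the components with $\sigma_r \in \Gamma$; for a component with $-\sigma_r \in \Gamma$ but $\sigma_r \notin \Gamma$, the character alone gives no vanishing, which is exactly why the hypothesis on $F$ in the first statement is phrased for the entire ramification divisor. This does not harm your general-type argument, since there you invoke the hypothesis and $F_0^N$ vanishes to order $N$ everywhere on that divisor. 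But in the second statement you justify the vanishing of $F \in S_m(\Gamma,\det)$ only on the $\sigma_r$-components. The missing case does work out, and the computation is worth recording: for $Z \in \calD_L^\bullet$ with $(Z,r)=0$ one has $(-\sigma_r)Z = -Z$, so homogeneity of degree $-m$ gives $F(-Z) = (-1)^m F(Z)$, while modularity gives $F(-Z) = \det(-\sigma_r)F(Z) = (-1)^{m+1}F(Z)$; hence $F(Z)=0$. Note that this uses the weight being exactly $m$, so the second statement genuinely depends on that coincidence and not merely on the oddness of $k=1$.
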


	One way to produce a cusp form $F \in S_a(\Gamma,\chi)$ is to leverage a modular form constructed by Borcherds in~\cite[\S10, Example~2]{Borcherds}. Let $L_{2,26}$ denote the \defi{Borcherds lattice} $U^{\oplus 2} \oplus E_8(-1)^{\oplus 3}$. The \defi{Borcherds form} $\Phi_{12} \in M_{12}(O^+(L_{2,26}),\det)$ is the unique (up to scaling) nonzero modular form of weight $12$ and character $\det$ with respect to the group $O^+(L_{2,26})$.  The form $\Phi_{12}$ vanishes, with multiplicity one, along certain divisors of $\calD_{L_{2,26}}^\bullet$, determined by the $(-2)$-vectors of the lattice $L_{2,26}$. More precisely, $\Phi_{12}(Z) = 0$ if and only if there exists an $r \in L_{2,26}$ with $r^2 = {-2}$ such that $(r,Z) = 0$.

	An embedding of lattices $L \hookrightarrow L_{2,26}$ gives an inclusion $\calD_L^\bullet \hookrightarrow \calD_{L_{2,26}}^\bullet$. If $r \in L_{2,26}$ is a $(-2)$-vector such that $(r,L) = 0$, then the restriction $\Phi_{12}|_{\calD_L^\bullet}$ vanishes identically.  Thus, to get a nonzero modular form, we first need to divide $\Phi_{12}$ by the linear functions $Z\mapsto (Z,r)$, as $r$ ranges over the $(-2)$-vectors of $L_{2,26}$, up to sign, that are orthogonal to $L$. Every time we divide out by a linear function, the weight of the resulting modular form increases by one.  We summarize this discussion in the following theorem (see~\cite[Theorem~8.2 and Corollary~8.12]{GHSHandbook}).

	\begin{theorem}
		\label{thm:quasipullback}
		Let $L$ be a lattice of signature $(2,m)$ with $m \geq 3$.  Let $L \hookrightarrow L_{2,26}$ be a primitive embedding, giving rise to an inclusion $\calD_L^\bullet \hookrightarrow \calD_{L_{2,26}}^\bullet$. The set
		\[
			R_{-2}(L) = \{r \in L_{2,26} : r^2 = -2, (r,L) = 0\},
		\]
is finite and has even cardinality; we let $N(L) = \#R_{-2}(L)/2$. The form
		\[
			\Phi|_L := \frac{\Phi_{12}(Z)}{\displaystyle \prod_{r \in R_{-2}(L)/\{\pm 1\}}(Z,r)}\Bigg|_{\calD^\bullet_L}
		\]
is modular of weight $12 + N(L)$ and character $\det$, with respect to the group $\widetilde{O}^+(L)$, i.e., $\Phi|_L \in M_{12+N(L)}(\widetilde{O}^+(L),\det)$. If $N(L) > 0$, then $\Phi|_L$ is a cusp form.
		\qed
	\end{theorem}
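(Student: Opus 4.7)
The plan is to prove the theorem in four steps: (i) finiteness and parity of $R_{-2}(L)$; (ii) well-definedness, holomorphicity, and weight of $\Phi|_L$; (iii) modularity under $\widetilde{O}^+(L)$; and (iv) the cusp form property when $N(L)>0$.

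For (i), since $L \hookrightarrow L_{2,26}$ is primitive with $L$ of signature $(2,m)$ and $L_{2,26}$ of signature $(2,26)$, the orthogonal complement $N := L^{\perp}_{L_{2,26}}$ is negative definite of rank $26-m$. Every $r \in R_{-2}(L)$ lies in $N$, and a negative definite lattice contains only finitely many vectors of any fixed length. The involution $r \mapsto -r$ is fixed-point-free on $R_{-2}(L)$, so its cardinality is even.

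For (ii), the zero divisor of $\Phi_{12}$ on $\calD_{L_{2,26}}^\bullet$ is the reduced sum $\sum_{[r]} D_r^\bullet$ over $\pm$-classes of $(-2)$-vectors $r$, where $D_r^\bullet = \{Z : (Z,r) = 0\}$. Exactly the $r \in R_{-2}(L)$ satisfy $\calD_L^\bullet \subseteq D_r^\bullet$, so $\Phi_{12}$ vanishes along $\calD_L^\bullet$ to order exactly $N(L)$. Hence $\Phi|_L$ is holomorphic in a neighborhood of $\calD_L^\bullet$ and restricts to a nonzero holomorphic function there. Because $\Phi_{12}(tZ) = t^{-12}\Phi_{12}(Z)$ and $(tZ, r) = t(Z, r)$, we get $\Phi|_L(tZ) = t^{-(12+N(L))}\Phi|_L(Z)$, confirming the weight.

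For (iii), given $g \in \widetilde{O}^+(L)$, Nikulin's gluing theorem (\cite[Corollary~1.5.2]{Nikulin}) produces an extension $\tilde g \in O^+(L_{2,26})$ of $g \oplus \id_N$ with $\det(\tilde g) = \det(g)$, using that $g$ acts trivially on the discriminant form $q_L$. The transformation law of $\Phi_{12}$ gives $\Phi_{12}(\tilde g Z) = \det(g)\Phi_{12}(Z)$, and since $\tilde g$ fixes each $r \in N$ pointwise, $(\tilde g Z, r) = (Z, \tilde g^{-1} r) = (Z, r)$. The denominator is therefore $\tilde g$-invariant, so $\Phi|_L(g Z) = \det(g)\Phi|_L(Z)$, as claimed.

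Step (iv) is where I expect the main obstacle. My plan is to analyze the Fourier--Jacobi expansion of $\Phi|_L$ at each rational boundary component of $\calD_L$ using the explicit Borcherds product for $\Phi_{12}$: at a cusp attached to a primitive isotropic $c \in L$, the leading Fourier term of $\Phi_{12}$ is a Weyl vector exponential, while the linear forms $(Z, r)$ for $r \in R_{-2}(L) \subseteq N$ are generically nonvanishing on the cusp. Counting exponents then shows that dividing by these forms strictly preserves positivity of the leading Fourier exponent whenever $N(L) > 0$. The delicate bookkeeping consists in matching rational isotropic flags of $L$ with those of $L_{2,26}$ and controlling the product expansion through the restriction $\calD_L^\bullet \hookrightarrow \calD_{L_{2,26}}^\bullet$; once this is in place, the cusp form claim follows, and the remaining parts of the theorem reduce to consequences of the Borcherds construction and Nikulin's extension theory.
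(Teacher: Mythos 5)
First, note that the paper does not prove this theorem at all: it is quoted from Gritsenko--Hulek--Sankaran \cite[Theorem~8.2 and Corollary~8.12]{GHSHandbook}, so the relevant comparison is with the proof in that reference. Your steps (i)--(iii) are correct and follow the standard argument: finiteness comes from the negative definiteness of $L^\perp_{L_{2,26}}$ (which is where primitivity and the signature count enter), exactness of the order of vanishing along $\calD_L^\bullet$ comes from the fact that the divisor of $\Phi_{12}$ is precisely the locally finite union of the $(-2)$-hyperplanes with multiplicity one, and modularity for $\widetilde{O}^+(L)$ with character $\det$ comes from extending $g\oplus\id_{L^\perp}$ to $L_{2,26}$ via Nikulin's gluing criterion. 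These parts are essentially what GHS do.

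The genuine gap is step (iv), the cusp form property, which you yourself flag as a plan rather than a proof. The heuristic you propose --- that the linear forms $(Z,r)$ are ``generically nonvanishing on the cusp'' and that ``counting exponents preserves positivity of the leading Fourier exponent'' --- does not constitute an argument and, taken literally, is misleading: dividing a Fourier series by a linear form does not formally shift exponents in a way that kills boundary terms. The actual content of \cite[Corollary~8.12]{GHSHandbook} is the following computation, which must be carried out at every $0$-dimensional cusp of $\calD_L$: one chooses a primitive isotropic $c\in L$, extends it to a cusp of $\calD_{L_{2,26}}$, Taylor-expands $\Phi_{12}$ in the directions normal to $\calD_L^\bullet$ inside $\calD_{L_{2,26}}^\bullet$ (the leading Taylor coefficient, of order $N(L)$, is the quasi-pullback up to an explicit nonzero constant), and then uses the explicit Fourier expansion of $\Phi_{12}$ --- whose coefficients $a(\lambda)$ depend only on $(\lambda,\lambda)$ and are the coefficients of $1/\Delta$ --- to show that the resulting Fourier coefficients supported on isotropic vectors $\lambda$ vanish whenever $N(L)>0$. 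The vanishing uses that for isotropic $\lambda$ one has $a(\lambda)=c(0)=24$ independent of $\lambda$, together with a symmetrization over $\lambda\mapsto-\lambda$ that produces an odd power of $(\lambda,r)$; none of this is visible in your sketch. Without this computation the claim that $\Phi|_L$ is a cusp form (the part of the theorem that actually drives the low-weight cusp form trick) is unproved.
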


%%%%%%%%%%%%%%%%%%%%%%%%%%%%%%%%%%%%%%%%%%%%%%%%%%%%

\section{The lattice $K_d^\perp(-1)$}
\label{s:TheLattice}

%%%%%%%%%%%%%%%%%%%%%%%%%%%%%%%%%%%%%%%%%%%%%%%%%%%%

	\begin{proposition}
		\label{prop:Kdperp}
		Let $K_d \subseteq \Lambda$ be the lattice of~\S\ref{ss:ModuliSpecialFourfolds}. Then $K_d^{\perp}(-1)$ is isomorphic to the rank $21$ lattice
		\[
			B_n \oplus U \oplus E_8(-1)^{\oplus 2} 
		\]
of signature $(2,19)$, where $B_n$ is the rank $3$ lattice with intersection pairing
		\begin{equation}
			\label{eq:Bn}
			\begin{pmatrix}
				-2 &  1 &  0 \\
				1  & -2 &  \eps \\
				0  &  \eps & 2n
			\end{pmatrix}
			\quad\textrm{and}\quad
			\eps = 
			\begin{cases}
				0 & \textrm{if } d = 6n, \\
				1 & \textrm{if } d = 6n + 2.
			\end{cases}
		\end{equation}
	\end{proposition}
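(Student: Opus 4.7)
The plan is to realize $K_d$ explicitly as a primitive rank-$2$ sublattice of $\Lambda$ with the required intersection matrix, and then compute the orthogonal complement in $\Lambda$ summand by summand. Hassett's decomposition of $\Lambda_0(-1)$ recalled above gives $\Lambda_0\cong B\oplus U^{\oplus 2}\oplus E_8^{\oplus 2}$ of signature $(20,2)$. Fix $\kappa_1,\kappa_2$ a basis of this copy of $B$ (with $\kappa_i^2=2$ and $(\kappa_1,\kappa_2)=1$), and $e,f$ a hyperbolic basis for one copy $U_1$ of $U$. Both sides of the claimed isomorphism have rank $21$ and signature $(2,19)$, so it suffices to exhibit the isomorphism after the $(-1)$-twist.

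First I would treat the easier case $d=6n$. The condition $(T,h^2)=0$ places $T\in\Lambda_0$ with $T^2=2n$, so I take $T:=ne+f\in U_1$: this vector is primitive of length $2n$ and orthogonal to $h^2$, so $K_d=\langle h^2,T\rangle$ is primitive in $\Lambda$ with the prescribed Gram matrix. Since $\langle T\rangle^{\perp}_{U_1}=\langle -ne+f\rangle$ has length $-2n$,
\[
K_d^{\perp}\cong B\oplus\langle -ne+f\rangle\oplus U\oplus E_8^{\oplus 2}.
\]
Taking the $(-1)$-twist and negating $\kappa_2$ in the resulting $B(-1)$ factor to flip the sign of the off-diagonal entry yields the Gram matrix for $B_n$ with $\eps=0$, while the remaining summands give $U\oplus E_8(-1)^{\oplus 2}$.

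The case $d=6n+2$ is more delicate, because $(T,h^2)=1$ forces $T\notin\Lambda_0$. Since $\Lambda$ is unimodular and both $\langle h^2\rangle$ and $\Lambda_0$ have discriminant group $\Z/3$, the index-$3$ extension $\langle h^2\rangle\oplus\Lambda_0\subseteq\Lambda$ is determined by an isometry of discriminant forms. Writing $\kappa_1^*=\tfrac{1}{3}(2\kappa_1-\kappa_2)\in B^{\vee}$ for the dual basis element, one has $(\kappa_1^*)^2=2/3$, and a direct length computation gives $\tfrac{h^2}{3}+\kappa_1^*\in\Lambda$. I would then set
\[
T:=\tfrac{h^2}{3}+\kappa_1^*+(ne+f),
\]
so that $T\in\Lambda$, $(T,h^2)=1$, and $T^2=\tfrac{1}{3}+\tfrac{2}{3}+2n=2n+1$; primitivity of $K_d=\langle h^2,T\rangle$ follows from primitivity of $ne+f$ in $U_1$ and the explicit description of the gluing. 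A vector $a\kappa_1+b\kappa_2+ce+df+w\in\Lambda_0$ (with $w\in U_2\oplus E_8^{\oplus 2}$) lies in $K_d^{\perp}$ iff $a+c+dn=0$, and eliminating $c$ realizes $K_d^{\perp}$ as the direct sum of $U_2\oplus E_8^{\oplus 2}$ and a rank-$3$ lattice with basis $v_1:=\kappa_1-e$, $v_2:=\kappa_2$, $v_3:=-ne+f$. A routine Gram-matrix computation in this basis, followed by the $(-1)$-twist and the change of basis $(w_1,w_2,w_3):=(-v_2,v_1,v_3)$, produces exactly $B_n$ with $\eps=1$.

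The main obstacle is the case $d\equiv 2\pmod 6$: the element $T$ cannot be chosen in $\Lambda_0$, so one must first identify the correct glue $\tfrac{h^2}{3}+\kappa_1^*\in\Lambda$ coming from the discriminant form of $B$, and then find the correct vector in $U_1$ to add in order to achieve $T^2=2n+1$ with the right orthogonality. Once the right $T$ is fixed, the rest is a routine linear-algebra exercise of comparing Gram matrices via an explicit change of basis.
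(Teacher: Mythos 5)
Your computations are correct and the endgame (the Gram matrix of $\{v_1,v_2,v_3\}$, the $(-1)$-twist, and the change of basis producing $B_n$) matches what the paper does, but you reach the key vector by a genuinely different route. The paper does not construct $T$ by hand: it passes to $K_d^0=\Lambda_0\cap K_d$ and invokes Hassett's classification (Proposition~3.2.5 of his thesis) that \emph{every} primitive vector of the relevant type in $\Lambda_0$ is congruent under $\Gamma_d^+$ to $e+nf$ (if $d=6n$) or to $\pm(a+b)+3(e+nf)$ (if $d=6n+2$), and then computes the orthogonal complement of that normal form in $B\oplus U$. You instead build one explicit $T\in\Lambda$ using the glue-vector description of $\Lambda$ as an index-$3$ overlattice of $\langle h^2\rangle\oplus\Lambda_0$. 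Your approach is more self-contained and makes the $d\equiv 2\bmod 6$ case concrete (your generator of $K_d^0$ is $2\kappa_1-\kappa_2+3(ne+f)$, which is $O(B)$-equivalent to Hassett's $(a+b)+3(e+nf)$), but it buys less: since \S2.5 of the paper pins $K_d$ down only by its Gram matrix, the proposition implicitly asserts that $K_d^\perp$ is independent of the choice of $T$, and that is exactly what Hassett's conjugacy result supplies and what your single explicit choice does not. You should either add a sentence invoking that classification or restate the proposition as holding for your particular $K_d$.

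Two smaller points need shoring up. First, ``a direct length computation gives $\tfrac{h^2}{3}+\kappa_1^*\in\Lambda$'' is not a proof: integrality of the length is necessary but not sufficient for membership in an overlattice. The correct argument is that the glue group of $\langle h^2\rangle\oplus\Lambda_0\subseteq\Lambda$ is the graph of an isomorphism $D(\langle h^2\rangle)\xrightarrow{\sim}D(\Lambda_0)\cong D(B)\cong\Z/3\Z$, so the glue vector is $\tfrac{h^2}{3}\pm\kappa_1^*$ modulo $\langle h^2\rangle\oplus\Lambda_0$, and the sign ambiguity is absorbed by the isometry $-1$ of $B$ (which acts as $-1$ on $D(B)$). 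Second, the primitivity of $\langle h^2,T\rangle$ deserves a line of argument (project a putative fractional vector to $U_1$ and to $\langle h^2\rangle^\vee$); ``the explicit description of the gluing'' is too terse to count as a verification. With these two repairs the proof is complete.
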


	\begin{proof}
		Let $K_d^0 = \Lambda_0\cap K_d$. The lattices $K_d^\perp \subseteq \Lambda$ and $(K_d^0)^\perp\subseteq \Lambda_0$ coincide; we compute the latter. Recall that $\Lambda_0 \isom B\oplus U^{\oplus 2} \oplus E_8^{\oplus 2}$, where $B = \langle a,b\rangle$ has intersection pairing given by~\eqref{eq:B}. Let $\langle e,f\rangle$ be a basis for one of the hyperbolic planes $U$ in $\Lambda_0$, with $e^2 = f^2 = 0$ and $ef = 1$. By~\cite[Proposition~3.2.5 and its proof]{HassettThesis}, a primitive vector in $\Lambda_0$ is congruent under $\Gamma^+_d$ to $e + nf$ if $d = 6n$ or $\pm(a+b) + 3(e + nf)$ if $d = 6n +2$. Applying this result to a primitive vector generating $K_d^0$, we obtain
		\[
			(K_d^0)^\perp \isom \langle e - nf\rangle \oplus B \oplus U \oplus E_8^{\oplus 2},
		\]
in the case when $d = 6n$. In the case when $d = 6n+2$, we get
		\[
			(K_d^0)^\perp \isom \langle \pm(a+b) + 3(e + nf) \rangle^\perp_{B\oplus U} \oplus U \oplus E_8^{\oplus 2}.
		\]
A calculation shows that $\{a-b,\mp a + f,-e + nf\}$ is a basis for the orthogonal complement of $\pm(a+b) + 3(e + nf)$ in $B \oplus U$, with Gram matrix
		\[
			\begin{pmatrix}
				2 & \mp 1 &  0 \\
				\mp 1  & 2 &  {-1} \\
				0  &  {-1} & {-2n}
			\end{pmatrix}
		\]
These two matrices, one for each choice of sign, are conjugate to each other. Take the matrix with value $-1$ in the $(2,1)$ and $(1,2)$ entries. To compute $K_d^\perp(-1)$ we multiply the resulting matrix entries by $-1$, finishing the proof of the proposition.
	\end{proof}

%%%%%%%%%%%%%%%%%%%%%%%%%%%%%%%%%%%%%%%%%%%%%%%%%%%%

\section{Lattice Engineering}
	\label{S:latticeEng}

%%%%%%%%%%%%%%%%%%%%%%%%%%%%%%%%%%%%%%%%%%%%%%%%%%%%

	In this section, for large enough $d$, we begin the construction of primitive embeddings of the lattice $K_d^\perp(-1)$ into the Borcherds lattice $L_{2,26}$, in such a way that the set
	\[
		R_{-2}\left( K_d^\perp(-1) \right) := \left\{r \in L_{2,26} : r^2 = -2, \left(r,K_d^\perp(-1)\right) = 0\right\},
	\]
is not empty and has $< 14$ elements, with a view towards applying Theorem~\ref{thm:quasipullback} to construct a low-weight cusp form with respect to the group $\widetilde{O}^+(K_d^\perp(-1))$.

	By Proposition~\ref{prop:Kdperp}, we have $K_d^\perp(-1) \isom B_n \oplus U \oplus E_8(-1)^{\oplus 2}$. Let $\{a_1,a_2,\ell\}$ be an ordered basis for the rank $3$ lattice $B_n$, with intersection pairing as in~\eqref{eq:Bn}. We embed $K_d^\perp(-1)\hookrightarrow L_{2,26}$ by taking the summands $U$ and $E_8(-1)^{\oplus 2}$ identically\footnote{Note that by~\cite[Theorem~1.14.4]{Nikulin}, there is a unique primitive embedding $U\oplus E_8(-1)^{\oplus 2} \hookrightarrow L_{2,26}$ up to isometries, so we may as well take the identity map on the components of $U\oplus E_8(-1)^{\oplus 2}$.} into corresponding summands of $L_{2,26}$, and by primitively embedding $B_n$ into the remaining $U \oplus E_8(-1)$ as follows. First, choose a sublattice $A_2(-1)$ inside $E_8(-1)$ and map $\langle a_1,a_2\rangle$ onto this copy of $A_2(-1)$ in the obvious way\footnote{Again, by~\cite[Theorem~1.14.4]{Nikulin} the embedding $A_2(-1) \hookrightarrow U\oplus E_8(-1)$ is unique up to isometries.}.   It then remains to specify the image of $\ell\in U \oplus E_8(-1)$ in such a way that the set 
	\[
		R_\ell := \left\{r \in U \oplus A_2(-1)^\perp_{E_8(-1)} : r^2 = -2, (r,\ell) = 0\right\}
	\]
is not empty and has $< 14$ elements. 

	\begin{lemma} 
		\label{lem:types}
		Let $\ell = \alpha e + \beta f + v$, where $U = \langle e,f\rangle, v \in E_8(-1)$, and $\alpha, \beta \in \Z$. Let $r = \alpha' e + \beta' f + v'$ be a vector of $R_\ell$, where $v' \in A_2(-1)^\perp_{E_8(-1)} \isom E_6(-1)$ and $\alpha',\beta' \in \Z$.  If $n < \alpha\beta < 2n$ then $\alpha'\beta' \in \{-1,0\}$. Moreover, if $\alpha'\beta' = -1$, then $\alpha = \beta$.
	\end{lemma}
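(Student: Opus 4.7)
The plan is to extract enough information from the three scalar equations $r^2 = -2$, $\ell^2 = 2n$, and $(r,\ell) = 0$ to pin down $\alpha'\beta'$. Since $E_6(-1)$ is by construction the orthogonal complement of $A_2(-1)$ inside $E_8(-1)$, I would first decompose $v = v_0 + v_1$ over $\Q$ with $v_0 \in A_2(-1)\otimes \Q$ and $v_1 \in E_6(-1)\otimes \Q$, so that $(v, v') = (v_1, v')$ because $v' \in E_6(-1)$. The identity $\ell^2 = 2n$ then gives $v_1^2 + v_0^2 = 2n - 2\alpha\beta$, and negative-definiteness of $A_2(-1)$ on its $\Q$-span yields the one-sided bound
\[
-v_1^2 \leq 2(\alpha\beta - n) = 2m, \qquad m := \alpha\beta - n \in (0, n).
\]
The identity $r^2 = -2$ reads $(v')^2 = -2(1 + \alpha'\beta')$, so negative-definiteness of $E_6(-1)$ already yields $\alpha'\beta' \geq -1$ for free.

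The main step is to rule out $\alpha'\beta' \geq 1$ by squeezing $\alpha\beta' + \alpha'\beta$ between two inequalities. From $(r,\ell) = 0$, one has $\alpha\beta' + \alpha'\beta = -(v_1, v')$; Cauchy-Schwarz applied to the negative-definite form on $E_6(-1)\otimes \Q$ therefore gives
\[
(\alpha\beta' + \alpha'\beta)^2 = (v_1, v')^2 \leq v_1^2 (v')^2 \leq 2m \cdot 2(1 + \alpha'\beta') = 4m(1 + \alpha'\beta').
\]
If $\alpha'\beta' > 0$, then since $\alpha\beta = n + m > 0$ forces $\alpha, \beta$ to share a sign (and similarly for $\alpha', \beta'$), the integers $\alpha\beta'$ and $\alpha'\beta$ also share a sign, so AM-GM on their absolute values yields
\[
(\alpha\beta' + \alpha'\beta)^2 \geq 4\alpha\beta \cdot \alpha'\beta' = 4(n + m)\alpha'\beta'.
\]
Combining the two bounds and cancelling a factor of $4$ leaves $n\alpha'\beta' \leq m$, which is incompatible with $\alpha'\beta' \geq 1$ and $m < n$. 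This forces $\alpha'\beta' \in \{-1, 0\}$.

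For the last sentence, if $\alpha'\beta' = -1$ then $(v')^2 = 0$, and negative-definiteness of $E_6(-1)$ forces $v' = 0$; since $\{\alpha', \beta'\} = \{1, -1\}$, the vanishing of $(r, \ell)$ collapses to $\pm(\beta - \alpha) = 0$, giving $\alpha = \beta$. The only real obstacle is pairing the correct inequalities on the two sides of the sandwich: once one sees that Cauchy-Schwarz in $E_6(-1)$ should be played against AM-GM on the hyperbolic coordinates, and that the upper bound $-v_1^2 \leq 2m$ is exactly what the hypothesis $\alpha\beta < 2n$ buys, the rest is bookkeeping.
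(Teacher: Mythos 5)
Your proof is correct and is essentially the paper's argument in a different packaging: the AM--GM inequality $(\alpha\beta'+\alpha'\beta)^2\geq 4\alpha\beta\,\alpha'\beta'$ is exactly the nonnegativity of the discriminant of the quadratic in $\alpha'$ that the paper writes down, and both arguments then play this against Cauchy--Schwarz in the definite part (you use the $E_6(-1)$-component $v_1$, the paper uses $v\in E_8(-1)$, which makes no difference) to reach the equivalent conclusions $n\alpha'\beta'\leq \alpha\beta-n$ and $(v')^2>-4$. The remaining steps --- $\alpha'\beta'\geq -1$ from negative definiteness and the deduction $\alpha=\beta$ when $v'=0$ --- likewise match the paper.
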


	\begin{proof}
		By definition of $R_\ell$, we have
		\begin{align}
			\label{eq:length}
			r^2 &= 2\alpha'\beta' + (v')^2 = -2, \textrm{ and} \\
			\label{eq:ortho}
			r\cdot\ell &= \alpha'\beta + \alpha\beta' + v\cdot v' = 0.
		\end{align}
If $\alpha' = 0$ we are done; assume then that $\alpha' \neq 0$. The relation~\eqref{eq:length} implies that
		\[
			\beta' = -\frac{2 + (v')^2}{2\alpha'}.
		\]
Substituting into~\eqref{eq:ortho}, we obtain the quadratic relation in $\alpha'$
		\[
			(2\beta)(\alpha')^2 + (2v\cdot v')\alpha' -\alpha(2 + (v')^2) = 0.
		\]
The discriminant of this quadratic equation must be nonnegative, implying that
		\[
			(v\cdot v')^2 \geq -2\alpha\beta(2 + (v')^2).
		\]
		The Cauchy-Schwarz inequality gives $v^2(v')^2 \geq (v\cdot v')^2$. On the other hand, we know that $\ell^2 = 2n$, so $v^2 = 2(n - \alpha\beta)$. Combining these facts we conclude that
		\[
			2(n - \alpha\beta)(v')^2 \geq -2\alpha\beta(2 + (v')^2), \textrm{ so that } (v')^2 \geq -\frac{2\alpha\beta}{n}.
		\]
		The assumption that $\alpha\beta < 2n$ then gives $(v')^2 > -4$. The vector $v'$ is in $E_6(-1)$, leaving us with the possibilities $(v')^2 = 0$ or $-2$. If $(v')^2 = 0$, then $v' = 0$ because $E_6(-1)$ is negative definite. But then $r^2 = -2$ implies $\alpha'\beta' = -1$, meaning $(\alpha',\beta') = (-1,1)$ or $(1,-1)$.  Either way, $r\cdot\ell = 0$ then implies that $\alpha = \beta$.  If $(v')^2 = -2$ then the condition $r^2 = -2$ forces $\alpha'\beta' =0$.  
	\end{proof}

	\begin{remark}
		\label{rem:types}
		Let $r \in R_\ell$ be a vector as Lemma~\ref{lem:types}, and suppose that $n < \alpha\beta < 2n$, and that $\alpha \neq \beta$, so that, without loss of generality, $r = \alpha' e + v'$ for some $\alpha'\in \Z$. The condition $r\cdot\ell = 0$ implies that $\alpha' = -(v\cdot v')/\beta$.
In particular, $\beta\mid v\cdot v'$. 
	\end{remark}

	Together, Lemma~\ref{lem:types} and Remark~\ref{rem:types} show that if $\alpha \neq \beta$ and $n < \alpha\beta < 2n$, then there are three types of vectors $r = \alpha'e + \beta'f + v' \in R_\ell$:
	\smallskip
	\begin{itemize}
		\item {\bf Type I}: vectors with $\alpha' = \beta' = 0$. In this case $r$ is a root in $E_6(-1)$.
		\smallskip
		\item {\bf Type II}: vectors with $\alpha'\neq 0$ and $\beta' = 0$. In this case $\alpha'\beta + v\cdot v' = 0$, so $v\cdot v' \neq 0$ and $v\cdot v' \equiv 0 \bmod \beta$.
		\smallskip
		\item {\bf Type III}: vectors with $\beta'\neq 0$ and $\alpha' = 0$. In this case $\alpha\beta' + v\cdot v' = 0$, so $v\cdot v' \neq 0$ and $v\cdot v' \equiv 0 \bmod \alpha$.
	\end{itemize}

	\begin{lemma}
		\label{lem:onlyTypeI}
		Suppose that $\alpha \neq \beta$, and that the following inequalities hold:
		\begin{equation}
			\label{eq:bareminimum}
			\sqrt{n} < \alpha,\qquad \sqrt{n} < \beta,\qquad \alpha\beta < 5n/4.
		\end{equation}
		Then there are no Type II or Type III vectors in $R_\ell$.
	\end{lemma}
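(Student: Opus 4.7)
The plan is to show that the defining relations for Type II and Type III vectors, combined with Cauchy--Schwarz inside the negative definite lattice $E_8(-1)$, yield inequalities on $\alpha$ and $\beta$ that clash with the hypotheses~\eqref{eq:bareminimum}. The key observation is that $\alpha' = -(v \cdot v')/\beta$ must be a nonzero integer, so $\beta$ divides $v \cdot v'$ and in particular $|v \cdot v'| \geq \beta$; an upper bound on $|v \cdot v'|$ coming from Cauchy--Schwarz then forces $\beta$ to be small.

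For a Type II vector $r = \alpha' e + v'$ with $\alpha' \neq 0$ and $(v')^2 = -2$, I would first observe that the hypothesis $\alpha\beta < 5n/4$ (together with $\alpha\beta > n$, which follows from $v^2 = 2(n - \alpha\beta) \leq 0$ since $E_8(-1)$ is negative definite) gives $0 < \alpha\beta - n < n/4$. Applying Cauchy--Schwarz in $E_8(-1)$ yields
\[
(v \cdot v')^2 \leq v^2 \cdot (v')^2 = 2(n - \alpha\beta) \cdot (-2) = 4(\alpha\beta - n) < n.
\]
Since $\alpha'$ is a nonzero integer and $\alpha'\beta = -v \cdot v'$, we have $\beta \leq |v \cdot v'|$, so $\beta^2 \leq (v \cdot v')^2 < n$. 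This contradicts $\beta > \sqrt{n}$, ruling out Type II vectors.

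The Type III case is entirely symmetric: for $r = \beta' f + v'$ with $\beta' \neq 0$, the relation $\alpha\beta' + v \cdot v' = 0$ and the same Cauchy--Schwarz bound give $\alpha^2 \leq (v \cdot v')^2 < n$, contradicting $\alpha > \sqrt{n}$.

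There is essentially no obstacle here; the only subtle point is remembering that the hypothesis $\alpha\beta < 5n/4$ is doing double duty, both ensuring $v \neq 0$ (via $\alpha\beta > n$, which is actually a consequence of $v^2 \leq 0$ and needs no extra hypothesis) and providing the crucial upper bound on $\alpha\beta - n$ that makes the Cauchy--Schwarz estimate tight enough to contradict $\alpha, \beta > \sqrt{n}$. The constant $5/4$ in~\eqref{eq:bareminimum} is precisely what turns the bound $4(\alpha\beta - n)$ into something smaller than $n$.
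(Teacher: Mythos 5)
Your proof is correct and follows essentially the same route as the paper: both arguments bound $|v\cdot v'|$ above by $\sqrt{4(\alpha\beta-n)}<\sqrt{n}$ via Cauchy--Schwarz and below by $\min(\alpha,\beta)>\sqrt{n}$ via the divisibility forced by $\alpha'\beta=-v\cdot v'$ (resp.\ $\alpha\beta'=-v\cdot v'$). The only cosmetic slip is your parenthetical claim that $\alpha\beta>n$ follows from $v^2\le 0$ (that only gives $\alpha\beta\ge n$); the strict inequality is immediate from $\alpha,\beta>\sqrt{n}$.
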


	\begin{proof}
		Lemma~\ref{lem:types} and its proof imply that if $r = \alpha'e + \beta'f + v' \in R_\ell$, then $\alpha' = 0$ or $\beta' = 0$, and that $(v')^2 = -2$.  Applying the Cauchy-Schwarz inequality we obtain
		\[
			|v\cdot v'| \leq \sqrt{v^2\cdot(v')^2} = \sqrt{2(n - \alpha\beta)(-2)} < \sqrt{n},
		\]
where the last inequality follows from $\alpha\beta < 5n/4$.  If $r$ is a Type II or Type III vector, then either $\alpha$ or $\beta$ divides $v\cdot v'$, and $v\cdot v' \neq 0$, so that $\sqrt{n} < |v\cdot v'|$. This is a contradiction.
\end{proof}

	\begin{remark}
		In the sequel, we ensure that the inequalities~\eqref{eq:bareminimum} hold by assuming there is an $\epsilon > 0$ such that the inequalities
		\begin{equation}
			\label{eq:notypeIIorIII}
			\sqrt{(1 + \epsilon)n} < \alpha < \sqrt{5n/4},\qquad
			\sqrt{(1 + \epsilon)n} < \beta < \sqrt{5n/4}
		\end{equation}
		hold.  An $\epsilon > 0$ gives us added flexibility when trying to satisfy additional conditions on top of~\eqref{eq:bareminimum} when $n \gg 0$, like $3(\alpha\beta - n) - 1 > 20$; see, for example, the proof of Theorem~\ref{thm:reduction}.
	\end{remark}

%%%%%%%%%%%%%%%%%%%%%%%%%%%%%%%%%%%%%%%%%%%%%%%%%%%%

\section{The case $d \equiv 2 \bmod 6$}
	\label{s:2mod6}

%%%%%%%%%%%%%%%%%%%%%%%%%%%%%%%%%%%%%%%%%%%%%%%%%%%%

\subsection{Preliminary reductions}
	\label{ss:PrelimReds}

	Retain the notation of \S\ref{S:latticeEng}. Let $d = 6n + 2$ for some positive integer $n$, and assume first that $n$ is sufficiently large so that there exist integers $\alpha$ and $\beta$ satisfying the hypotheses of Lemma~\ref{lem:onlyTypeI}. Then any $r \in R_\ell$ must be a Type I vector, and thus is a root in $E_6(-1)$.  This reduces the problem of finding a vector $\ell$ to finding a vector $v \in E_8(-1)$ of length $v^2 = 2(n - \alpha\beta)$ with $(v,a_1) = 0$ and $(v,a_2) = 1$ (where $A_2(-1) = \langle a_1,a_2\rangle$), such that
	\[
		0 < \#\{r \in E_6(-1) : r^2 = -2, (r,v) = 0\} < 14.
	\]
We have inclusions of abelian groups (in fact, of $\Q$-lattices)
	\begin{equation}
		\label{eq:inclusions}
		E_6(-1)\oplus A_2(-1) \subset E_8(-1) = E_8(-1)^\vee \subset E_6(-1)^\vee\oplus A_2(-1)^\vee.
	\end{equation}
We write $v = v_1^\vee + v_2^\vee$ with $v_1^\vee \in E_6(-1)^\vee$ and $v_2^\vee \in A_2(-1)^\vee$. The conditions $(v,a_1) = 0$ and $(v,a_2) = 1$ imply that $v_2^\vee = -(a_1+2a_2)/3$. Since $(v_2^\vee)^2 = -2/3$ we can further reduce our problem to finding a vector $v_1^\vee \in E_6(-1)^\vee$ of length $(v_1^\vee)^2 = 2(n - \alpha\beta) + 2/3$, such that
	\[
		v_1^\vee + v_2^\vee \in E_8(-1)\quad\textrm{and}\quad 0 < \#\{r \in E_6(-1) : r^2 = -2, (r,v_1^\vee) = 0\} < 14.
	\]
For later notational convenience, we rephrase this equivalent problem using positive definite lattices $E_6$ and $E_8$, as follows.

	\begin{problem}
		\label{prob:lattices}
		Let $n$ be an integer that is sufficiently large so that there exist $\epsilon > 0$ and coprime integers $\alpha$ and $\beta$ satisfying the inequalities~\eqref{eq:notypeIIorIII}. Let $\langle a_1, a_2\rangle$ be a sublattice of type $A_2$ in $E_8$ with orthogonal complement $E_6$, and let $v_2^\vee = -(a_1+2a_2)/3 \in A_2^\vee$. Find a vector $v_1^\vee \in E_6^\vee$ of length $(v_1^\vee)^2 = 2(\alpha\beta - n) - 2/3$ such that
		\[
			v_1^\vee + v_2^\vee \in E_8\quad\textrm{and}\quad 0 < \#\{r \in E_6 : r^2 = 2, (r,v_1^\vee) = 0\} < 14.
		\]
	\end{problem}

	The positive definite version of the inclusions~\eqref{eq:inclusions} reads
	\[
		E_6\oplus A_2 \subset E_8 = E_8^\vee \subset E_6^\vee\oplus A_2^\vee.
	\]
	Let $[0]$, $[1]$ and $[2]$ denote representatives for the elements of the discriminant group $D(E_6) \isom \Z/3\Z$. Let $T = E_6 + [1]$ and $T' = E_6 + [2]$, so that $E_6^\vee = E_6 \sqcup T \sqcup T'$ (disjoint union). Without loss of generality we may assume that 
	\begin{enumerate}
		\item $[0] = 0 \in E_6$, i.e., $[0]$ is the identity element for the underlying abelian group of $E_6$,
		\smallskip
		\item $[1] + v_2^\vee \in E_8$, and
		\smallskip
		\item $[2] = {-[1]}$.
	\end{enumerate}
	Condition $(2)$ implies that $v_1^\vee + v_2^\vee \in E_8$ is equivalent to $v_1^\vee \in T$. Let $a \in E_6$ be a root, and let $M_1 = \langle a\rangle^\perp_{E_6^\vee}$.  Abstractly, as a $\Q$-lattice, $M_1$ does not depend on the choice of the root $a$, since the Weyl group $W(E_6)$ acts transitively on the set of roots $R(E_6)$ of $E_6$. To solve Problem~\ref{prob:lattices}, it suffices to find $v_1^\vee \in M_1\cap T$ of length $2(\alpha\beta - n) - 2/3$ such that 
	\[
		\#\{r \in E_6 : r^2 = 2, (r,v_1^\vee) = 0\} < 14.
	\]
To study this problem, we separate the roots $R(E_6)$ of the lattice $E_6$ as follows:
	\[
		R(E_6) = R\left(\langle a\rangle^\perp_{E_6}\right) \sqcup X_{42},\quad\textrm{where }X_{42} = \{c \in R(E_6) : (a,c)\neq 0\}. 
	\]
The proof of the following lemma mirrors that of~\cite[Lemma~7.2(i)]{GHSInventiones}; we include it for completeness.

	\begin{lemma}
		\label{lem:roots of E6}
		The set $X_{42}$ is the union of 10 root systems of type $A_2$ such that 
		\[
			R\left(A_2^{(i)}\right) \cap R\left(A_2^{(j)}\right) = \{\pm a\} \textrm{ for any }i \neq j.
		\]
	\end{lemma}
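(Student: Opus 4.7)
\medskip

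\noindent\textbf{Proof plan.} The plan is to exhibit each $A_2^{(i)}$ explicitly as the intersection $\langle a, c\rangle_\Q \cap R(E_6)$ for appropriate roots $c$, then to count. First I would observe that since $E_6$ is simply laced, for any root $c \in R(E_6)$ we have $(a,c) \in \{0,\pm 1, \pm 2\}$, with $(a,c) = \pm 2$ if and only if $c = \pm a$. Thus $X_{42} \setminus \{\pm a\}$ consists precisely of the roots $c$ with $(a,c) = \pm 1$.

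Next, for $c \in X_{42}$ with $c \neq \pm a$, after possibly replacing $c$ by $-c$, assume $(a,c) = -1$. Then $(a+c)^2 = 2 + 2(a,c) + 2 = 2$, so $a+c \in R(E_6)$. The six vectors $\{\pm a, \pm c, \pm(a+c)\}$ all have length $2$ and lie in the rank-two sublattice $\langle a, c\rangle_\Z$; they form a root subsystem $A_2(c) := \langle a,c\rangle_\Q \cap R(E_6)$ of type $A_2$ containing $\pm a$. Any $A_2$-subsystem of $E_6$ containing $\pm a$ must arise this way, since the other four roots of such a subsystem span the same $\Q$-plane as $\langle a, c\rangle_\Q$ together with $a$.

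I would then show the pairwise intersection property: if $A_2(c) \neq A_2(c')$ but the two subsystems share a root $c'' \neq \pm a$, then both subsystems would contain $\pm a$ and $\pm c''$, hence both would equal $\langle a, c''\rangle_\Q \cap R(E_6)$, contradicting that they are distinct. So $A_2(c) \cap A_2(c') = \{\pm a\}$ whenever the subsystems differ.

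Finally, to conclude there are exactly $10$ such subsystems, I would count. The roots of $E_6$ orthogonal to a fixed root $a$ form a root subsystem of type $A_5$ (a standard fact, e.g.\ read off from the extended Dynkin diagram of $E_6$), contributing $30$ roots; together with $\pm a$, this accounts for $32$ of the $72$ roots of $E_6$, leaving $|X_{42} \setminus \{\pm a\}| = 40$. Each $A_2$-subsystem containing $\pm a$ contributes exactly $4$ roots to $X_{42} \setminus \{\pm a\}$ (the complement of $\{\pm a\}$ in the $6$-element $A_2$), and by the previous step these contributions are disjoint across distinct subsystems. Hence there are $40/4 = 10$ subsystems, as claimed. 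The only mildly subtle step is the identification of the root system orthogonal to $a$ as $A_5$; this is routine but I would cite it rather than re-derive it.
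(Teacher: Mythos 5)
Your proof is correct and follows essentially the same route as the paper's: bound $|(a,c)|$ for pairs of roots, form the subsystem $\{\pm a,\pm c,\pm(a+c)\}$ for each $c$ with $(a,c)=-1$, and observe that two such subsystems sharing a root other than $\pm a$ must coincide. The only difference is that you make explicit the count $|X_{42}\setminus\{\pm a\}|=40=10\cdot 4$ (via the orthogonal $A_5$), which the paper leaves implicit; that is a welcome addition rather than a divergence.
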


	\begin{proof}
		By the Cauchy-Schwarz inequality, any two roots $b$, $c$ of $E_6$ satisfy
		\[
			|(b,c)| \leq \sqrt{b^2\cdot c^2} = 2,
		\]
with equality if and only if $b = \pm c$. Assume that $(a,c) = -1$ (otherwise replace $c$ with ${-c}$), so that $A_2(a,c) := \Z a + \Z c$ is a lattice of type $A_2$. This lattice contains six roots:
		\[
			R\left(A_2(a,c)\right) = \{\pm a, \pm c, \pm (a + c)\},
		\]
and it is generated by any pair of linearly independent roots. Hence
		\[
			R\left(A_2(a,c_1)\right) \cap R\left(A_2(a,c_2)\right) = \{\pm a\}.
			\tag*{\qed}
		\]
		\hideqed
	\end{proof}

	Assume that the set $\{r \in E_6 : r^2 = 2, (r,v_1^\vee) = 0\}$ has at least 14 elements.  The vector $v_1^\vee$ must be orthogonal to some roots in $R\left(\langle a\rangle^\perp_{E_6}\right)$ (30 roots) or in $X_{42}\setminus\{\pm a\}$ (40 roots). From Lemma~\ref{lem:roots of E6}, it follows that
	\begin{equation}
		\label{eq:decomp}
		v_1^\vee \in \bigcup_{i = 1}^{10}\left( \left(A_2^{(i)}\right)^\perp_{E_6^\vee} \cap T\right)
		\cup \bigcup_{i = 1}^{15}\left( \left(A_1^{(i)}\right)^\perp_{M_1}\cap T \right).
	\end{equation}
	Define
	\[
		M_2 = \left( A_2\right)^\perp_{E_6^\vee}
		\quad\textrm{and}\quad
		M_3 = \left(A_1\right)^\perp_{M_1} = \left(A_1\oplus A_1\right)^\perp_{E_6^\vee},
	\]
and note that $M_2$ does not depend on the particular choice of $A_2$ in $E_6 \subseteq E_6^\vee$, because the Weyl group $W(E_6)$ acts transitively on the set of $A_2$-sublattices of $E_6$. Similarly, $M_3$ does not depend on the particular choice of $A_1\oplus A_1$ in $E_6 \subseteq E_6^\vee$. Let $\mathfrak{n}(v_1^\vee)$ be the number of components in the decomposition~\eqref{eq:decomp} that contain $v_1^\vee$.

	\begin{lemma}
		\label{lem:numcomps}
		Let $v_1^\vee \in M_1\cap T$ be a vector of length $2(\alpha\beta - n) - 2/3$. Assume that the set $R\left(\langle v_1^\vee\rangle^\perp_{E_6}\right) = \{r \in E_6 : r^2 = 2, (r,v_1^\vee) = 0\}$ has at least 14 elements.  Then $\frakn(v_1^\vee)\geq 4$.
	\end{lemma}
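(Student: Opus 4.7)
The plan is to establish an exact count $|R(\langle v_1^\vee\rangle^\perp_{E_6})| = 2 + 4k + 2m$, where $k$ is the number of $A_2^{(i)}$ components and $m$ the number of $A_1^{(i)}$ components in~\eqref{eq:decomp} containing $v_1^\vee$, so that $\mathfrak{n}(v_1^\vee) = k + m$. The inclusion $v_1^\vee \in M_1$ forces $\pm a \in R(\langle v_1^\vee\rangle^\perp_{E_6})$, contributing $2$. Any root $c \in X_{42} \setminus \{\pm a\}$ orthogonal to $v_1^\vee$ satisfies $v_1^\vee \perp \langle a, c\rangle$, one of the $A_2^{(i)}$'s; conversely each $A_2^{(i)}$ orthogonal to $v_1^\vee$ contributes its $4$ roots outside $\{\pm a\}$, and by Lemma~\ref{lem:roots of E6} these contributions are pairwise disjoint, yielding $4k$ roots. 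For the roots in $A_5 := \langle a\rangle^\perp_{E_6}$, each $\pm$-pair is one of the $15$ sublattices $A_1^{(i)}$, giving $2m$ more.

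With the count in hand, the hypothesis translates to $2k + m \geq 6$. Suppose for contradiction that $\mathfrak{n}(v_1^\vee) = k + m \leq 3$. Combining these two inequalities forces $k \geq 3$, whence $(k, m) = (3, 0)$. The whole argument thus reduces to ruling out the single scenario in which $v_1^\vee$ is orthogonal to exactly three of the $A_2^{(i)}$'s but to no root of $A_5$.

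To dispose of $(k, m) = (3, 0)$, I choose representatives $c_i \in A_2^{(i)}$ with $c_i \cdot a = -1$ for $i = 1, 2, 3$. Distinctness of the $A_2^{(i)}$'s excludes $c_i \cdot c_j = -1$ (which would give $c_i + c_j = -a$, forcing $A_2^{(j)} = A_2^{(i)}$), and as non-proportional roots one has $|c_i \cdot c_j| \leq 1$, so $c_i \cdot c_j \in \{0, 1\}$ for all $i \neq j$. The crux is that either subcase produces a root in $A_5$ orthogonal to $v_1^\vee$, contradicting $m = 0$: if some $c_i \cdot c_j = 1$, then $c_i - c_j$ is a root of length $2$, orthogonal to $a$ and to $v_1^\vee$; if instead $c_i \cdot c_j = 0$ for all $i \neq j$, a direct length computation shows that $a + c_i + c_j$ is a root, likewise orthogonal to $a$ and to $v_1^\vee$.

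The main obstacle is precisely this final dichotomy: the counting argument leaves no slack in the extremal case $(k, m) = (3, 0)$, so one must dig into the concrete geometry of the $E_6$ root system. In the fully orthogonal subcase, $\{a, c_1, c_2, c_3\}$ is in fact a simple system of type $D_4$ inside $E_6$, and this extra structure is what guarantees the existence of the additional roots in $A_5$ that break the contradiction. Everything else --- the count established in the first paragraph and the elementary algebra that isolates the single bad case --- is essentially forced.
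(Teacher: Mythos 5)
Your proof is correct and is essentially the paper's argument: both rest on the decomposition~\eqref{eq:decomp}, the disjointness statement of Lemma~\ref{lem:roots of E6}, and the observation that two distinct $A_2^{(i)}$'s through $\pm a$ generate a rank-$3$ root system containing a root of $\langle a\rangle^\perp_{E_6}$. Your exact count $2+4k+2m$ collapses the paper's case analysis on the number of $M_2\cap T$ components to the single extremal case $(k,m)=(3,0)$, which you eliminate by the same mechanism the paper invokes (via \cite[Lemma~7.2]{GHSInventiones}) in its cases (2)--(3); this is tidier bookkeeping, not a different route.
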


	\begin{proof}
		We perform an analysis similar to~\cite[p.\ 555]{GHSInventiones}. Suppose first that $(v_1^\vee,c) \neq 0$ for any $c \in X_{42}\setminus\{\pm a\}$. Then in the decomposition~\eqref{eq:decomp}, the vector $v_1^\vee$ can belong only to components of the form $M_3\cap T$. Since the set $R\left(\langle v_1^\vee\rangle^\perp_{E_6}\right)$ contains $\{\pm a\}$, and at least $12$ other elements, by hypothesis, we conclude that $\frakn(v_1^\vee)\geq 6$.

		Suppose now that there exists a $c \in X_{42}\setminus\{\pm a\}$ such that $(v_1^\vee,c) = 0$. Then $v_1^\vee$ is orthogonal to $R\left(A_2(a,c)\right)\subseteq X_{42}$, i.e., $v_1^\vee$ belongs to at least one component of the form $M_2\cap T$ in the decomposition~\eqref{eq:decomp}. 
		\smallskip
		\begin{enumerate}
			\item If no other components of this type contain $v_1^\vee$, then since $\#R\left(\langle v_1^\vee\rangle^\perp_{E_6}\right) > 14$, we know that $v_1^\vee$ lies in at least $4$ components of the form $M_3\cap T$, so that $\frakn(v_1^\vee) \geq 5$. 
			\smallskip
			\item If $v_1^\vee$ is contained in exactly two components of the form $M_2\cap T$, then it is orthogonal to the roots of the root system $A_2^{(i)} + A_2^{(j)}$; this system is isomorphic to $A_3$ (see~\cite[Lemma~7.2(ii)]{GHSInventiones}); thus $v_1^\vee$ is orthogonal to the $12$ roots of this $A_3$.  Since $\#R\left(\langle v_1^\vee\rangle^\perp_{E_6}\right) > 14$, we conclude that $v_1^\vee$ is contained in at least one component of the form $M_3\cap T$, and hence $\frakn(v_1^\vee) \geq 4$ in this case. 
			\smallskip
			\item If $v_1^\vee$ is contained in exactly three components of the form $M_2\cap T$, then it is orthogonal to the roots in the system $A_2^{(i)} + A_2^{(j)} + A_2^{(k)}$ which is isomorphic to either $A_4$ or $D_4$; in either case there are three components of the form $M_3 \cap T$ that contain $v_1^\vee$.  See~\cite[Lemma~7.2(iii)]{GHSInventiones} for details of this calculation, taking into account that the $E_7^{(a)}$ in their notation can be replaced by $\langle a\rangle^\perp_{E_6}$ in our notation without so changing their proof. We conclude that $\frakn(v_1^\vee) \geq 6$.
			\smallskip
			\item If $v_1^\vee$ is contained in at least four components of the form $M_2\cap T$, then we are already done. \qed
		\end{enumerate}
		\hideqed
	\end{proof}

	\begin{theorem}
		\label{thm:reduction}
		Let $n$ be an integer that is sufficiently large so that there exist $\epsilon > 0$ and coprime integers $\alpha$ and $\beta$ satisfying the inequalities~\eqref{eq:notypeIIorIII}. Let $m := 2(\alpha\beta - n) - 2/3$. Suppose that
		\begin{equation}
			\label{eq:latticeineq}
			10N_{M_2\cap T}\left(m\right) + 15N_{M_3\cap T}\left(m\right) < 4N_{M_1 \cap T}\left(m\right)
		\end{equation}
Then there exists a vector $v_1^\vee \in M_1 \cap T \subseteq E_6^\vee$ of length $m$ such that 
		\[
			0 < \#R\left( \langle v_1^\vee\rangle^\perp_{E_6}\right) < 14.
		\]
		The vector $v_1^\vee$ yields a solution to Problem~\ref{prob:lattices}.
	\end{theorem}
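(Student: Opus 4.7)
The plan is to use a double-counting argument on the decomposition~\eqref{eq:decomp}, combined with Lemma~\ref{lem:numcomps}. The positive lower bound $\#R(\langle v_1^\vee\rangle^\perp_{E_6}) > 0$ is automatic for \emph{any} candidate $v_1^\vee$: since $v_1^\vee \in M_1 = \langle a\rangle^\perp_{E_6^\vee}$, the vector is orthogonal to the root $a \in R(E_6)$, so $\{\pm a\} \subseteq R(\langle v_1^\vee\rangle^\perp_{E_6})$. The substantive task is to arrange the upper bound $\#R(\langle v_1^\vee\rangle^\perp_{E_6}) < 14$.

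To this end, call $v_1^\vee \in M_1 \cap T$ of length $m$ \emph{bad} if $\#R(\langle v_1^\vee\rangle^\perp_{E_6}) \geq 14$. By Lemma~\ref{lem:numcomps}, every bad vector satisfies $\frakn(v_1^\vee) \geq 4$, i.e.\ lies in at least four of the $25$ components on the right-hand side of~\eqref{eq:decomp}---ten of the form $\left(A_2^{(i)}\right)^\perp_{E_6^\vee}\cap T$ and fifteen of the form $\left(A_1^{(i)}\right)^\perp_{M_1}\cap T$. The Weyl group $W(E_6)$ acts transitively on sublattices of $E_6$ of type $A_2$ (respectively, of type $A_1$) and preserves the coset $T = E_6 + [1]$ setwise, so every component of the first kind is $\Q$-lattice isometric to $M_2\cap T$ and contains exactly $N_{M_2\cap T}(m)$ length-$m$ vectors, while every component of the second kind is isometric to $M_3\cap T$ and contains $N_{M_3\cap T}(m)$ length-$m$ vectors.

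I would then double-count incidence pairs $(v_1^\vee, C)$ where $v_1^\vee$ is a bad length-$m$ vector and $C$ is a component of~\eqref{eq:decomp} containing~$v_1^\vee$. Summing first over $C$ produces at most $10 N_{M_2\cap T}(m) + 15 N_{M_3\cap T}(m)$ pairs; summing first over $v_1^\vee$ produces at least $4$ times the number of bad vectors. If every length-$m$ vector of $M_1\cap T$ were bad, the number of bad vectors would equal $N_{M_1\cap T}(m)$, and combining the two counts would yield $4 N_{M_1\cap T}(m) \leq 10 N_{M_2\cap T}(m) + 15 N_{M_3\cap T}(m)$, directly contradicting~\eqref{eq:latticeineq}. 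Hence some length-$m$ vector $v_1^\vee \in M_1\cap T$ is not bad; together with the automatic lower bound it solves Problem~\ref{prob:lattices}.

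The only nontrivial bookkeeping is the identification of each component of~\eqref{eq:decomp} with the abstract $\Q$-lattice $M_2\cap T$ or $M_3\cap T$ via the transitive $W(E_6)$-action on the appropriate sublattice type, which is what makes the counts $N_{M_j\cap T}(m)$ well-defined and applicable uniformly across components. The genuinely difficult step---not part of this theorem, but looming just beyond it---will be to \emph{verify} the inequality~\eqref{eq:latticeineq} for the values of $n$ of interest, which is precisely the motivation for the theta-series analysis sketched in~\S\ref{ss:contributions}.
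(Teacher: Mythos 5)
Your proposal is correct and is essentially the paper's own argument: the paper likewise double-counts incidences via the identity $10N_{M_2\cap T}(m) + 15N_{M_3\cap T}(m) = \sum_{v_1^\vee} \frakn(v_1^\vee)$ and derives a contradiction with~\eqref{eq:latticeineq} from the bound $\frakn(v_1^\vee)\geq 4$ of Lemma~\ref{lem:numcomps} under the assumption that every length-$m$ vector is orthogonal to at least $14$ roots. Your explicit remarks that $\pm a$ guarantee the lower bound $\#R(\langle v_1^\vee\rangle^\perp_{E_6})>0$ and that the $W(E_6)$-action fixes $T$ setwise (making the counts $N_{M_j\cap T}(m)$ uniform across components) are details the paper leaves implicit, but they are accurate.
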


	\begin{proof}
		Suppose, to the contrary, that every vector $v_1^\vee \in M_1\cap T$ of length $m$ is orthogonal to at least $14$ roots of $E_6$.  On the one hand, by the decomposition~\eqref{eq:decomp} and definition of $\frakn(v_1^\vee)$, we have
		\[
			10N_{M_2\cap T}\left(m\right) + 15N_{M_3\cap T}\left(m\right) = \sum_{\substack{v_1^\vee \in M_1 \cap T \\ (v_1^\vee)^2 = m}} \frakn(v_1^\vee).
		\]
On the other hand, Lemma~\ref{lem:numcomps} implies that
		\[
			\sum_{\substack{v_1^\vee \in M_1 \cap T \\ (v_1^\vee)^2 = m}} \frakn(v_1^\vee)
			\geq 4N_{M_1 \cap T}\left(m\right),
		\]
contradicting the assumed inequality~\eqref{eq:latticeineq}.
	\end{proof}

%%%%%%%%%%%%%%%%%%%%%%%%%%%%%%%%%%%%%%%%%%%%%%%%%%%%

\subsection{Explicit realizations of $M_i$, $i = 1$, $2$ and $3$}

	To apply Theorem~\ref{thm:reduction} to the problem of constructing embeddings $K_d^\perp(-1) \hookrightarrow L_{2,26}$ with only a few roots of $L_{2,26}$ orthogonal to the image, we need a lower bound for $N_{M_1\cap T}\left(m\right)$, as well as upper bounds for $N_{M_2\cap T}\left(m\right)$ and $N_{M_3\cap T}\left(m\right)$. To this end, we fix explicit realizations of the lattices $M_i$, $i = 1$, $2$ and $3$. 

	Following the conventions in~\cite[p.\ 127]{CS}, we let $e_1,\dots,e_8$ denote the standard basis of $\Q^8$, considered as a lattice with the standard dot product. Then
	\begin{align*}
		E_6^\vee &= \langle e_3 - e_2, e_4 - e_3, e_5 - e_4, e_6 - e_5, (2e_2 + 2e_3 - e_4 - e_5 - e_6 - e_7)/3, \\
		&\quad\ (e_1 + e_2 + e_3 + e_4 - e_5 - e_6 - e_7 - e_8)/2 \rangle,
	\intertext{The vector $a = e_3 - e_2$ is a root of $E_6 \subseteq E_6^\vee$. Hence, in these coordinates,}
		M_1 &= \langle a\rangle^\perp_{E_6^\vee} = \langle 2e_4 - e_3 - e_2, e_5 - e_4, e_6 - e_5, (2e_2 + 2e_3 - e_4 - e_5 - e_6 - e_7)/3, \\
		&\qquad\qquad\quad (e_1 + e_2 + e_3 + e_4 - e_5 - e_6 - e_7 - e_8)/2 \rangle.
	\intertext{We have $\left\langle e_3 - e_2,(e_1 + e_2 - e_3 + e_4 + e_5 - e_6 - e_7 - e_8)/2\right\rangle \isom A_2 \subseteq E_6$. Hence}
		M_2 &= (A_2)^\perp_{E_6^\vee} = \langle -e_2 - e_3 + 2e_4 - e_5 + e_6, -e_4 + e_5, \\
		&\qquad\qquad\qquad (e_1 + e_2 + e_3 + e_4 - 3e_5 + e_6 - e_7 - e_8)/2, \\
		&\qquad\qquad\qquad (2e_2 + 2e_3 - e_4 - e_5 - e_6 - e_7)/3\rangle
	\intertext{Finally, the vector $e_6-e_5$ is a root in $M_1$, and so}
		M_3 &= (A_1)_{M_1}^\perp = \langle -2e_4 + e_5+e_6,-e_2-e_3+2e_4, \\
		&\qquad\qquad\qquad (2e_2+2e_3-e_4-e_5-e_6-e_7)/3, \\
		&\qquad\qquad\qquad (e_1+e_2+e_3+e_4-e_5-e_6-e_7-e_8)/2\rangle
	\end{align*}
%For $i = 1$, $2$ and $3$, we let $B_i$ be the Gram matrix for $M_i$ under the above bases.  Then $B_1$, $B_2$ and $B_3$ are respectively equal to
Let $B_1$, $B_2$ and $B_3$ be the matrices
	\[
		\begin{bmatrix}
			 6 & -2 &  0 & -2 &  0 \\
			-2 &  2 & -1 &  0 & -1 \\ 
			 0 & -1 &  2 &  0 &  0 \\
			-2 &  0 &  0 &4/3 &  1 \\
			 0 & -1 &  0 &  1 &  2
		\end{bmatrix},
		\quad
		\begin{bmatrix}
			 8 & -3 &  2 & -2 \\
			-3 &  2 & -2 &  0 \\
			 2 & -2 &  4 &  1 \\
			-2 &  0 &  1 & 4/3 
		\end{bmatrix},
		\quad\text{and}\quad
		\begin{bmatrix}
		 6 & -4 &  0  & -2 \\
		-4 &  6 & -2  &  0 \\
		 0 & -2 & 4/3 &  1 \\
		-2 &  0 &  1  &  2 
		\end{bmatrix},
	\]
of determinants $2/3$, $1$ and $4/3$, respectively. Then $B_i$ is the Gram matrix for $M_i$ under the above bases ($i = 1$, $2$ and $3$).
The matrices $3B_i$ are symmetric, even, integral, and positive definite; we set 
	\[
		S_i = S_i(x) = \frac{1}{2}x^t(3B_i)\,x\quad i = 1, 2, 3.
	\]
A {\tt Magma} computation shows that each of the three quadratic forms $S_i$ is unique in its genus (see the script {\tt ThetaFunctionComputations} included in the {\tt arXiv} distribution of this article). We shall use this fact to apply Theorem~\ref{thm:representationsbyintegers} to these forms.

%%%%%%%%%%%%%%%%%%%%%%%%%%%%%%%%%%%%%%%%%%%%%%%%%%%%

\subsection{Lower bound for $N_{M_1\cap T}\left(m\right)$}
\label{ss:M1Bound}

	\begin{prop}
		\label{cor:M1Bound}
		Let $t$ be an integer congruent to $2$ or $11 \bmod 12$, and let $m = 2t/3$. Then 
		\[
			N_{M_1\cap T}\left(m\right) > 5.2488\cdot m^{3/2}.
		\]
	\end{prop}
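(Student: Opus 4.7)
The strategy is to reduce the coset count $N_{M_1\cap T}(m)$ to $\tfrac12 N_{M_1}(m)$ via a congruence argument, and then to estimate $N_{M_1}(m)$ using Theorem~\ref{thm:representationsbyintegers}. A representative of the generator $[1]\in D(E_6)\cong\Z/3\Z$ is the vector $(2e_2+2e_3-e_4-e_5-e_6-e_7)/3$, of length $4/3$, so $q_{E_6}([1])\equiv 4/3\pmod{2\Z}$. For any $v\in E_6+[1]$, writing $v=y+[1]$ with $y\in E_6$, we have $(v,v)=(y,y)+2(y,[1])+([1],[1])\equiv 4/3\pmod{2\Z}$, since $(y,y)\in 2\Z$ and $2(y,[1])\in 2\Z$; the same congruence holds on $T'=E_6+[2]$. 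Hence a vector in $E_6^\vee$ of length $m=2t/3$ lies in $T\sqcup T'$ if and only if $2t\equiv 4\pmod 6$, i.e.\ $t\equiv 2\pmod 3$. Both $2$ and $11$ are $\equiv 2\pmod 3$, so under our hypothesis every vector of length $m$ in $M_1$ lies in $M_1\cap(T\sqcup T')$. The involution $-\id$ of $M_1$ swaps $T$ and $T'$, so
\begin{equation*}
N_{M_1\cap T}(m)=N_{M_1\cap T'}(m)=\tfrac12 N_{M_1}(m).
\end{equation*}

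Next, I apply Theorem~\ref{thm:representationsbyintegers} to $S_1(x)=\tfrac12 x^t(3B_1)x$. The matrix $3B_1$ is symmetric, positive definite, even and integral of rank $5$, with determinant $162=2\cdot 3^4$, and $S_1$ is unique in its genus by the \texttt{Magma} computation referenced in \S\ref{ss:PrelimReds}. Since $S_1(x)=\tfrac32(x,x)_{M_1}$, one has $r(t,S_1)=N_{M_1}(m)$. Because $2\cdot 162 = 324 = 18^2$, the invariant $D$ in Theorem~\ref{thm:representationsbyintegers} reduces to $D=\disc(\Q(\sqrt{t}))$. Substituting $t=3m/2$ and collecting numerical constants gives
\begin{equation*}
\tfrac12 N_{M_1}(m) = A\cdot m^{3/2}\cdot L\!\left(2,Dt_2^2\right)\prod_{p\in\{2,3\}}\frac{1-\chi_D(p)p^{-2}}{1-p^{-4}}\,\alpha_p(t,S_1),
\end{equation*}
with $A=\tfrac12(2\pi)^{5/2}\Gamma(5/2)^{-1}(3/2)^{3/2}\,162^{-1/2}\zeta(4)^{-1}\approx 4.966$.

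It therefore suffices to show that the product of the $L$-factor and the two local densities is bounded below by $5.2488/A\approx 1.058$. The Zagier $L$-function $L(2,\Delta)$ admits an Euler product at $s=2$ and can be estimated uniformly from below using the trivial inequality $L(2,\chi)\geq 1/\zeta(2)$ for quadratic Dirichlet $L$-functions, together with the relation tying $L(2,\Delta)$ to $L(2,\chi_{4D})$. For the local densities at the two primes $p\in\{2,3\}$ dividing $|3B_1|$, one first normalizes $S_1$ over $\Z_p$ into the Jordan forms~\eqref{eq:ZpNormalization} and~\eqref{eq:diagZ2}, and then evaluates $\alpha_p(t,S_1)=1+R_1(t,S_1)$ using~\eqref{eq:R1pOdd} and~\eqref{eq:R1Z2}; the congruences $t\bmod 3$ and $t\bmod 4$ extracted from $t\equiv 2$ or $11\pmod{12}$ pin down the relevant signs and powers of $p$, reducing these expressions to explicit finite sums.

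The main obstacle is the $2$-adic analysis: one must produce an explicit $\Z_2$-diagonalization of $S_1$ in the form~\eqref{eq:diagZ2}, and then evaluate $\alpha_2(t,S_1)$ separately in the cases $t\equiv 2\pmod 4$ and $t\equiv 3\pmod 4$, keeping track of the characters $\left(\tfrac{2}{\mu\epsilon(k)}\right)$ and the exponential factors $e^{-2\pi i(\mu/8)}$ that appear in~\eqref{eq:R1Z2}. Once these local computations, together with the $L$-function lower bound, are assembled and multiplied by $A\approx 4.966$, the inequality $N_{M_1\cap T}(m)>5.2488\cdot m^{3/2}$ follows.
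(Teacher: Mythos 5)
Your overall route is the same as the paper's: reduce $N_{M_1\cap T}(m)$ to $\tfrac12 N_{M_1}(m)=\tfrac12 r(3m/2,S_1)$ (your congruence argument via $q_{E_6}$ is a harmless variant of the paper's observation that $N_{A_5}(m)=0$ because $m\notin\Z$), then apply Theorem~\ref{thm:representationsbyintegers} to $S_1$ and bound the Zagier $L$-value and the local densities $\alpha_2,\alpha_3$ from below. Your constant $A\approx 4.966$ and the reduction to showing the remaining product exceeds $5.2488/A\approx 1.0575$ agree with the paper.

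There is, however, a concrete gap in your treatment of the $L$-factor, and it is exactly where the constant $5.2488$ is decided. The ``trivial'' bound you invoke ($L(2,\chi)\geq 1/\zeta(2)$, or even the sharper $\prod_p(1+p^{-2})^{-1}=\zeta(4)/\zeta(2)$) yields, after multiplying by the worst-case Euler factors $\prod_{p=2,3}\frac{1-p^{-2}}{1-p^{-4}}=0.72$ and the local densities $\alpha_2\geq 25/28$, $\alpha_3=2$, only about $0.846$, i.e.\ a final constant near $4.20$, well short of $5.2488$. The paper closes this gap by working directly with the Dirichlet series $\sum_n b_n(Dt_2^2)n^{-s}=\zeta(s)L(s,Dt_2^2)/\zeta(2s)$: multiplicativity of $b_n$ gives an Euler product whose odd local factors are $\geq 1$, and the hypothesis $t\equiv 2$ or $3\bmod 4$ (which is where the $12$ in ``$2$ or $11\bmod 12$'' earns its keep beyond fixing $t\bmod 3$) forces $b_2(Dt_2^2)=1$, so the factor at $2$ is $\geq 1+2^{-s}$. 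This yields $L(2,Dt_2^2)\geq \tfrac{\zeta(4)}{\zeta(2)}\cdot\tfrac54$, and that extra $\tfrac54$ is precisely what lifts the product to $1.0575$ and the constant to $5.2488$. Your sketch attributes the role of $t\bmod 4$ only to the local densities in Yang's formulas, so as written your $L$-function estimate would not deliver the stated inequality; you need to extract the $p=2$ local factor of the Zagier $L$-function explicitly, as above. The remaining items you defer (the $\Z_2$- and $\Z_3$-normalizations of $S_1$ and the bounds $\alpha_2\geq 25/28$, $\alpha_3=2$) are carried out in the paper exactly along the lines you describe.
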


	\begin{proof}
		Recall that $M_1 = \langle a\rangle^\perp_{E_6^\vee}$, where $a$ is a root of $E_6$, that $T = E_6 + [1]$, and that $T' = E_6 + [2]$. Hence $M_1 \cap E_6 \isom A_5$, and there is a disjoint union decomposition
		\[
			M_1 = A_5 \sqcup (M_1\cap T) \sqcup (M_1 \cap T').
		\]
		In terms of the coefficients of the associated theta series of these lattices, we obtain
		\[
			N_{M_1}(m) = N_{A_5}(m) + N_{M_1\cap T}(m) + N_{M_1 \cap T'}(m).
		\]
		On the other hand, the map $v\mapsto {-v}$ gives a length-preserving bijection between the lattice cosets $M_1\cap T$ and $M_1 \cap T'$, and hence
		\begin{equation*}
			2N_{M_1 \cap T}(m) = N_{M_1 \cap T}(m) +N_{M_1 \cap T'}(m)
			= N_{M_1}(m) - N_{A_5}(m).
		\end{equation*}
		Since $m$ is not an integer, we have $N_{A_5}(m) = 0$; hence estimating $N_{M_1\cap T}\left(m\right)$ and $N_{M_1}\left(m\right)/2$ are equivalent problems, and
		\begin{equation*}
			\label{eq:Ntor}
			N_{M_1\cap T}\left(m\right) = \frac{N_{M_1}\left(m\right)}{2} = \frac{r\left(3m/2,S_1\right)}{2}.
		\end{equation*}
		Applying Theorem~\ref{thm:representationsbyintegers} to $S = S_1$ and $t = 3m/2$ we obtain
		\begin{equation}
			\label{eq:M1capT}
			\begin{split}
				N_{M_1\cap T}\left(m\right) &= \frac{1}{2}(2\pi)^{5/2}\Gamma(5/2)^{-1}\cdot t^{3/2}\cdot |2\cdot 3^4|^{-1/2}L(2,Dt_2^2)\zeta(4)^{-1}\\
				&\qquad \times\prod_{p = 2, 3}\frac{1 - \chi_D(p)p^{-2}}{1 - p^{-4}}\alpha_p\left(t,S_1\right),
			\end{split}
		\end{equation}
where $D = \disc \Q(\sqrt{t})$ and $\chi_D(a) = \left(\frac{D}{a}\right)$ is the Jacobi symbol for $a \in \Z_{>0}$.
To bound $N_{M_1\cap T}\left(m\right)$ from below, we first bound $L(2,Dt_2^2)$. By definition we have
		\begin{align}
			\sum_{n \geq 1} \frac{b_n(Dt_2^2)}{n^s} &= \frac{\zeta(s)L(s, Dt_2^2)}{\zeta(2s)} \tag*{}\\
		\intertext{The chinese reminder theorem implies that $b_n(Dt_2^2)$ is multiplicative in $n$, giving an Euler product expansion}
			\sum_{n \geq 1} \frac{b_n(Dt_2^2)}{n^s} &= \prod_p \left( \sum_{k\geq 0} \frac{b_{p^k}(Dt_2^2)}{p^{ks}}\right). \tag*{}\\
		\intertext{All local factors at odd primes $p$ are $\geq 1$; the assumption that $t \equiv 2$ or $3 \bmod 4$ implies that $b_2(Dt_2^2)= 1$, giving a lower bound of $1+1/2^s$ for the local factor of $p=2$. Hence}
\sum_{n \geq 1} \frac{b_n(Dt_2^2)}{n^s} &\geq 1+\frac{1}{2^s}, \tag*{}\\
		\intertext{and consequently, we have the estimate}
			\label{lem:Lbound}
			L(2, Dt_2^2) &\geq \frac{\zeta(4)}{\zeta(2)}\cdot \frac{5}{4}.
		\end{align}

		We turn to the local densities $\alpha_2(t,S_1)$ and $\alpha_3(t,S_1)$, and claim that
		\begin{equation}
			\label{lem:AlphaBound}
			\alpha_2(t,S_1) \geq \frac{25}{28}\quad\textrm{and}\quad \alpha_3(t,S_1) = 2.
		\end{equation}
To see this, first note that the matrix $(3/2)B_1$ is $\Z_2$-equivalent to
		\[
			\begin{pmatrix}
			1
			\end{pmatrix}
			\oplus
			\begin{pmatrix}
				1/3 & 1/6 \\
				1/6 & 1/3
			\end{pmatrix}
			\oplus
			\begin{pmatrix}
				1/3 & 1/6 \\
				1/6 & 1/3
			\end{pmatrix}
		\]
so in the notation of~\eqref{eq:diagZ2}, we have $\ell_1 = n_1 = n_2 = 0$, $\epsilon_1 = 1$, and $\epsilon_1'' = \epsilon_2'' = 1/3$. Since
		\[
			\alpha_2(t,S_1) \geq 1 - |R_1(t,S_1)|
		\]
we need only obtain an upper bound for $|R_1(t,S_1)|$. For this, we assume that the quantities $\delta(k)$, $p(k)$, $\big(\tfrac{2}{\mu\epsilon(k)}\big)$, $e^{-2\pi i(\mu/8)}$, and $\Char(4\Z_2)(\mu)$
 in~\eqref{eq:R1Z2} are all equal to $1$.  We also note that $\ell(k-1,1)$ being odd is equivalent to $k$ being even, while $\ell(k-1,1)$ is even precisely when $k$ is odd.  Using~\eqref{eq:diagZ2} we compute that $d(k) = -3k/2 + 1/2$ except if $k = 1$ (in which case $\delta(k) = 0$, so we ignore these terms). Putting all of this together, we get
		\[
			|R_1(t,S_1)| \leq \sum_{\substack{1 < k \\ k \textrm{ even}}} 2^{-3k/2 - 1} + 
			\sum_{\substack{1 < k \\ k \textrm{ odd}}} 2^{-3k/2 - 1/2} = \frac{3}{28},
		\]
and hence
		\[
			\alpha_2(t,S_1) \geq 1 - \frac{3}{28} = \frac{25}{28}.
		\]
Over $\Z_3$, the matrix $(3/2)B_1$ of $S_1$ can be diagonalized to $\diag(2,6,6,6,3)$. In the notation of~\eqref{eq:ZpNormalization} we have $\ell_1 = 0$, $\ell_2 = \ell_3 = \ell_4 = \ell_5 = 1$ and $\epsilon_1=\epsilon_2=\epsilon_3=\epsilon_4=2$, $\epsilon_5 = 1$. Since $t = \frac{3}{2}m(\alpha,\beta,n)$ is not divisible by $3$, the quantity $a$ in~\eqref{eq:R1pOdd} is zero, and thus $R_1(t,S_1) = v(1)\cdot 3^{d(1)}\cdot f(t) = 1$. We conclude that $\alpha_3(t,S_1) = 2$, establishing~\eqref{lem:AlphaBound}.

		To finish the proof, we apply the inequality
		\[
			\frac{1 - \chi_D(p)p^{-2}}{1 - p^{-4}} \geq \frac{1 - p^{-2}}{1 - p^{-4}},
		\]
for $p = 2$, $3$, as well as \eqref{lem:Lbound} and~\eqref{lem:AlphaBound} to~\eqref{eq:M1capT}.
	\end{proof}

%%%%%%%%%%%%%%%%%%%%%%%%%%%%%%%%%%%%%%%%%%%%%%%%%%%%

\subsection{Upper bounds for $N_{M_2\cap T}\left(m\right)$ and $N_{M_3\cap T}\left(m\right)$}

	\begin{prop}
		\label{cor:M2Bound}
		Let $t$ be an integer congruent to $2 \bmod 3$, and let $m = 2t/3$. Then 
		\begin{align*}
			N_{M_2\cap T}\left(m\right) &< \frac{9m}{4} \cdot\left(\ln\left(3m/2\right) + 1\right), \textrm{ and}\\
			N_{M_3\cap T}\left(m\right) &< 9.0004 \cdot m\cdot\left(\ln\left(3m/2\right) + 1\right).
		\end{align*}
	\end{prop}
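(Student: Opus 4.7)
The plan is to parallel the proof of Proposition~\ref{cor:M1Bound}, but now applying the even-rank branch of Theorem~\ref{thm:representationsbyintegers} to the rank-$4$ forms $S_2$ and $S_3$, both of which are unique in their genera.

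First, I would establish the coset decomposition for each $M_i$, $i=2,3$. The displayed basis of $M_i$ contains the vector $(2e_2+2e_3-e_4-e_5-e_6-e_7)/3$, which is not integral and hence represents a nontrivial class in $D(E_6)=\Z/3\Z$. Consequently the image of $M_i$ in $E_6^\vee/E_6$ is all of $\Z/3\Z$, so $M_i$ meets each of the three cosets $E_6$, $T$, $T'$. The involution $v\mapsto -v$ identifies $M_i\cap T$ with $M_i\cap T'$. Since $m=2t/3$ with $t\equiv 2\pmod{3}$ is not an integer, $N_{M_i\cap E_6}(m)=0$, and hence
\[
N_{M_i\cap T}(m)=\tfrac{1}{2}N_{M_i}(m)=\tfrac{1}{2}r(t,S_i),\qquad t=3m/2.
\]

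Next, applying the even-rank case of Theorem~\ref{thm:representationsbyintegers} with $r=4$ and $D=|3B_i|$ (equal to $81$ for $i=2$ and $108$ for $i=3$) yields
\[
r(t,S_i)=\Bigl(\sum_{a\mid t}\chi_{4D}(a)\,a^{-1}\Bigr)L(2,\chi_{4D})^{-1}\,\alpha_\infty(t,S_i)\prod_{p\mid 2D}\alpha_p(t,S_i).
\]
I would bound the divisor sum in absolute value by $\sum_{a=1}^{t}a^{-1}\leq 1+\ln t$. Since $\chi_{4D}(p)=0$ for $p\mid 2D$, the Euler product yields
\[
L(2,\chi_{4D})^{-1}=\prod_{p\nmid 2D}\bigl(1-\chi_{4D}(p)p^{-2}\bigr)\leq \frac{\zeta(2)/\zeta(4)}{\prod_{p\mid 2D}(1+p^{-2})},
\]
and in each of our two cases $2D$ is divisible only by $2$ and $3$, giving an explicit numerical constant. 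The archimedean density is computed directly as $\alpha_\infty(t,S_i)=4\pi^2 t/\sqrt{|3B_i|}$.

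The main obstacle will be obtaining sufficiently sharp upper bounds on the non-archimedean densities $\alpha_p(t,S_i)=1+R_1(t,S_i)$ for $p=2,3$. For each such $p$ I would first produce an explicit $\Z_p$-diagonalization of $3B_i$; formulas~\eqref{eq:R1pOdd} at $p=3$ and~\eqref{eq:R1Z2} at $p=2$ then express $R_1(t,S_i)$ as a finite weighted sum of geometrically decaying terms. The bookkeeping is more intricate than in Proposition~\ref{cor:M1Bound} because we now need upper, rather than lower, bounds on $|R_1|$ uniformly in $t$; in~\eqref{eq:R1Z2} the summands depend on the parity of $\ell(k-1,1)$ and on $\mu_k(t)\bmod 8$, but passing to absolute values inside each summand and summing the resulting geometric series produces a uniform numerical bound. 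Combining these four estimates and substituting $t=3m/2$ yields the claimed inequalities; the slightly larger constant $9.0004$ in the $M_3$ case reflects that the local densities of $S_3$ at $p=2,3$ contribute marginally more to the product than those of $S_2$.
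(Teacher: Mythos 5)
Your overall strategy is the one the paper uses: the coset decomposition reducing $N_{M_i\cap T}(m)$ to $\tfrac{1}{2}r(3m/2,S_i)$, the even-rank branch of Theorem~\ref{thm:representationsbyintegers} with $r=4$ and $D=\det(3B_i)$, the bound $\sum_{a\mid t}\chi_{4D}(a)a^{-1}\le \ln t+1$, and Yang's formulas for the local densities at $p=2,3$. But two of the steps you defer or estimate generically are exactly where the content of this proposition lives, and as written the proposal does not establish the stated constants.

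First, the local densities. You describe the method (diagonalize $\tfrac{3}{2}B_i$ over $\Z_p$, feed into~\eqref{eq:R1pOdd} and~\eqref{eq:R1Z2}, take absolute values, sum the geometric series) but never produce numbers. The paper carries this out: over $\Z_2$ the form $\tfrac{3}{2}B_2$ splits into two binary blocks of the second type in~\eqref{eq:diagZ2} with $n_1=n_2=0$, giving $d(k)=-k$, $|R_1(t,S_2)|\le\sum_{k>0}2^{-k-1}=\tfrac{1}{2}$, hence $\alpha_2(t,S_2)\le\tfrac{3}{2}$; over $\Z_3$ one gets $\alpha_3(t,S_i)=2$ exactly (not just an upper bound), using that $3\nmid t$. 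Without these explicit values the inequalities in the statement cannot be certified. Second, your $L$-function bound $L(2,\chi_{4D})^{-1}\le(\zeta(2)/\zeta(4))/\prod_{p\mid 2D}(1+p^{-2})\approx 1.094$ is too lossy. The paper instead identifies the character: for $S_2$ one has $4D=324=18^2$, so $\chi_{4D}$ is principal away from $2$ and $3$ and $L(2,\chi_{4D})=\pi^2/9$ is computed exactly; for $S_3$ one has $4D=432$, so $\chi_{4D}$ is the real character attached to $3$, and a numerical truncation of the series gives $L(2,\chi_{4D})^{-1}<1.0530$. Substituting your $1.094$ in place of $1.0530$ pushes the $M_3$ constant to roughly $9.35$, above the claimed $9.0004$, so that inequality would not follow from your estimates. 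Finally, your closing heuristic is off: the paper's local density bounds are \emph{identical} for $S_2$ and $S_3$ ($\alpha_2\le 3/2$, $\alpha_3=2$); the difference between the two constants in the statement does not come from the local densities but from the determinants ($81$ versus $108$) and the $L$-values.
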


	\begin{proof}
		Let us first consider the upper bound for $N_{M_2\cap T}\left(m\right)$.
Recall that after fixing $A_2 \subseteq E_6$, we put $M_2 = (A_2)^\perp_{E_6^\vee}$, so that $M_2 \cap E_6 = (A_2)^\perp_{E_6} \isom A_2\oplus A_2$. The
disjoint union decomposition
		\[
			M_2 = (A_2\oplus A_2) \sqcup (M_2\cap T) \sqcup (M_2 \cap T'),
		\]
where $T = E_6 + [1]$, and $T' = E_6 + [2]$, gives an equality of coefficients of associated theta series:
		\[
			N_{M_2}(m) = N_{A_2\oplus A_2}(m) + N_{M_2\cap T}(m) + N_{M_2 \cap T'}(m).
		\]
As in the proof of Proposition~\ref{cor:M1Bound}, we have $N_{M_2 \cap T}(m) = N_{M_2 \cap T'}(m)$ and $N_{A_2\oplus A_2}(m) = 0$ because $m$ is not an integer, so that estimating $N_{M_2\cap T}\left(m\right)$ and $N_{M_2}\left(m\right)/2$ are equivalent problems, and
		\begin{equation*}
			\label{eq:Ntor2}
			N_{M_2\cap T}\left(m\right) = \frac{N_{M_2}\left(m\right)}{2} = \frac{r\left(3m/2,S_2\right)}{2}
		\end{equation*}
Applying Theorem~\ref{thm:representationsbyintegers} to $S = S_2$ and $t = 3m/2$, we obtain 
		\begin{equation}
			\label{eq:M2capT}
			\begin{split}
				N_{M_2\cap T}\left(m\right) &= \frac{1}{2}\bigg(\sum_{a\mid t}\chi_{4D}(a)a^{-1}\bigg)L(2,\chi_{4D})^{-1} \\
				&\qquad \times (2\pi)^2\cdot\Gamma(2)^{-1}\cdot t\cdot |3^4|^{-1/2} \prod_{p \mid 2D}\alpha_p(t,S_2),
			\end{split}
		\end{equation}
The $L$-function term can be computed via its Euler product expansion
		\begin{equation}
			\label{eq:Lvalue}
			L(2,\chi_{4D}) = \prod_p \left(1 - \frac{\chi_{4D}(p)}{p^2}\right)^{-1} = \prod_{p \neq 2, 3} \left(1 - \frac{1}{p^2}\right)^{-1} = \left(1 - \frac{1}{2^2}\right)\cdot \left(1 - \frac{1}{3^2}\right)\cdot \frac{\pi^2}{6}.
		\end{equation}
Next, note that
		\begin{equation}
			\label{eq:logisin}
			\sum_{a\mid t}\chi_{4D}(a)a^{-1} \leq \sum_{a\mid t}\frac{1}{a} \leq \sum_{a = 1}^t \frac{1}{a} < \log t + 1.
		\end{equation}

		We turn to the local densities $\alpha_2(t,S_1)$ and $\alpha_3(t,S_1)$, and claim that
		\begin{equation}
			\label{lem:M2locDensities}
			\alpha_2(t,S_2) \leq \frac{3}{2}\quad\textrm{and}\quad \alpha_3(t,S_2) = 2.
		\end{equation}
To see this, note that over $\Z_2$ the matrix $(3/2)B_2$ is equivalent to
		\[
			\begin{pmatrix}
				1/3 & 1/6 \\
				1/6 & 1/3
			\end{pmatrix}
			\oplus
			\begin{pmatrix}
				1/3 & 1/6 \\
				1/6 & 1/3
			\end{pmatrix}
		\]
so in the notation of~\eqref{eq:diagZ2}, we have $n_1 = n_2 = 0$ and $\epsilon_1'' = \epsilon_2'' = 1/3$. Since
		\[
			\alpha_2(t,S_2) \leq 1 + |R_1(t,S_2)|,
		\]
we look for an upper bound for $|R_1(t,S_2)|$.  We assume that the quantities $\delta(k)$, $p(k)$, $\big(\frac{2}{\mu\epsilon(k)}\big)$, $e^{-2\pi i(\mu/8)}$, and $\Char(4\Z_2)(\mu)$
 in~\eqref{eq:R1Z2} are all equal to $1$.  The quantity $\ell(k-1,1)$ is always zero, therefore even. We compute that $d(k) = -k$, and thus obtain
		\[
			|R_1(t,S_2)| \leq \sum_{0 < k < \infty} 2^{-k-1} = \frac{1}{2}.
		\]
Hence $\alpha_2(t,S_2) \leq 1 + \frac{1}{2} = \frac{3}{2}$,
as claimed.  The proof that $\alpha_3(t,S_2) = 2$ is similar to the proof that $\alpha_3(t,S_1) = 2$.

		Finally, applying~\eqref{eq:Lvalue},~\eqref{eq:logisin}, and~\eqref{lem:M2locDensities} to~\eqref{eq:M2capT}, we obtain the desired bound on $N_{M_2\cap T}(m)$.

		For the lattice $M_3\cap T$, we have
		\begin{equation*}
			\label{eq:Ntor3}
			N_{M_3\cap T}\left(m\right) = \frac{N_{M_3}\left(m\right)}{2} = \frac{r\left(3m/2,S_3\right)}{2},
		\end{equation*}
so that, applying Theorem~\ref{thm:representationsbyintegers} to $S = S_3$ and $t = 3m/2$, we obtain 
		\begin{equation}
			\label{eq:M3capT}
			\begin{split}
				N_{M_3\cap T}\left(m\right) &= \frac{1}{2}\bigg(\sum_{a\mid t}\chi_{4D}(a)a^{-1}\bigg)L(2,\chi_{4D})^{-1} \\
				&\qquad \times (2\pi)^2\cdot\Gamma(2)^{-1}\cdot t\cdot |3^4|^{-1/2} \prod_{p \mid 2D}\alpha_p(t,S_2),
			\end{split}
		\end{equation}
For the $L$-function term, we compute
		\begin{align*}
			L(2,\chi_{4D}) &= \left(1 - \frac{1}{2^2}\right)L(2,\chi_{3}) = \frac{3}{4}\sum_{n=1}^\infty\frac{\chi_{3}(n)}{n^2} \\
			&\geq \frac{3}{4}\left(\sum_{n=1}^k\frac{\chi_{3}(n)}{n^2} - \sum_{n=k+1}^\infty\frac{1}{n^2}\right) \\
			&= \frac{3}{4}\left(-\sum_{n=1}^\infty\frac{1}{n^2} + \sum_{n=1}^k\frac{\chi_3(n) + 1}{n^2}\right).
		\end{align*}
Putting $k = 25,000$ in a computer algebra system, using the equality $\sum (1/n^2) = \pi^2/6$ and taking inverses gives
		\begin{equation}
			\label{lem:LfunctionBoundM3}
			L(2,\chi_{4D})^{-1} < 1.0530.
		\end{equation}
For the local density terms, one computes, as in the case of $S_1$ and $S_2$, that
		\begin{equation}
			\label{lem:M3locDensities}
			\alpha_2(t,S_3) \leq \frac{3}{2}\quad\textrm{and}\quad \alpha_3(t,S_3) = 2.
		\end{equation}
Applying~\eqref{lem:LfunctionBoundM3},~\eqref{eq:logisin}, and~\eqref{lem:M3locDensities} to~\eqref{eq:M3capT}, we obtain the desired bound on $N_{M_3\cap T}(m)$.
	\end{proof}

%%%%%%%%%%%%%%%%%%%%%%%%%%%%%%%%%%%%%%%%%%%%%%%%%%%%%%%%%%

\subsection{Proof of Theorem~\ref{thm:MainThm} (1)}
\label{ss:ProofThm1.1}

	\begin{lemma}
		\label{lem:GHSineq}
		Let $t > 20$ be an integer such that $t \equiv 2$ or $11 \bmod 12$. Then
		\begin{equation}
			\label{eq:KeyInequality}
			10N_{M_2\cap T}(2t/3) + 15N_{M_3\cap T}(2t/3) < 4N_{M_1 \cap T}(2t/3).
		\end{equation}
	\end{lemma}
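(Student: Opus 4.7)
The plan is to combine the explicit bounds already established in Propositions~\ref{cor:M1Bound} and~\ref{cor:M2Bound} with a direct computational verification for small values of $t$. Setting $m = 2t/3$ and substituting the upper bounds from Proposition~\ref{cor:M2Bound} into the left-hand side of the desired inequality, and the lower bound from Proposition~\ref{cor:M1Bound} into the right-hand side, we obtain
$$10 N_{M_2\cap T}(m) + 15 N_{M_3\cap T}(m) < \left(\tfrac{90}{4} + 15\cdot 9.0004\right) m\bigl(\ln(3m/2) + 1\bigr) = 157.506\, m\bigl(\ln(3m/2) + 1\bigr),$$
while
$$4 N_{M_1\cap T}(m) > 20.9952\, m^{3/2}.$$
Thus the target inequality is implied by
$$\ln(3m/2) + 1 < \tfrac{20.9952}{157.506}\, m^{1/2} \approx 0.1333\, m^{1/2}.$$
Since the left-hand side grows logarithmically in $m$ and the right-hand side like $m^{1/2}$, elementary calculus produces an explicit threshold $m_0$ (on the order of a few thousand) beyond which the inequality holds. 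This settles the lemma for all $t > T_0 := \lceil 3m_0/2 \rceil$ satisfying the congruence condition.

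For the remaining range $20 < t \leq T_0$ with $t \equiv 2$ or $11 \bmod 12$, direct computation suffices. Using the explicit Gram matrices $B_1$, $B_2$, $B_3$ of~\S5.2 together with the identity $N_{M_i \cap T}(2t/3) = r(t, S_i)/2$ (valid because $2t/3 \notin \Z$ forces $N_{M_i}(2t/3) = 2 N_{M_i \cap T}(2t/3)$ via the coset decomposition $M_i = (M_i \cap E_6) \sqcup (M_i \cap T) \sqcup (M_i \cap T')$ used in the proofs of Propositions~\ref{cor:M1Bound} and~\ref{cor:M2Bound}), each coefficient $r(t, S_i)$ is computed efficiently in a computer algebra system such as \texttt{Magma}. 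Iterating through the two residue classes $t \equiv 2$ and $t \equiv 11 \bmod 12$ in the finite range and checking the inequality for each value completes the proof.

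The main obstacle is the potentially large gap between the asymptotic threshold $T_0$ and the base case $t = 22$, which forces a finite but nontrivial computational verification. This is manageable because the quadratic forms $S_i$ are unique in their genus (so the $r(t, S_i)$ are given by the Siegel product formula of Theorem~\ref{thm:representationsbyintegers}, and moreover can also be read off directly from the theta series of $M_i$), and the lattices $M_i$ have small rank, so coefficient computation is fast even for $t$ in the thousands. A sharper choice of constants in Propositions~\ref{cor:M1Bound} and~\ref{cor:M2Bound}---e.g., a tighter lower bound for $L(2, Dt_2^2)$ or a more careful treatment of the $2$-adic and $3$-adic local densities---would shrink $T_0$ and hence the computational burden, but would not alter the structure of the argument.
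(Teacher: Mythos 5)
Your proposal is correct and follows essentially the same route as the paper: substitute the bounds of Propositions~\ref{cor:M1Bound} and~\ref{cor:M2Bound} to get an asymptotic threshold near $t\approx 8\,500$, then verify the remaining cases by explicitly computing the theta coefficients via $N_{M_i\cap T}(2t/3)=r(t,S_i)/2$. The only difference is that the paper inserts an intermediate refinement --- checking that $\sum_{a\mid t}1/a < 0.444(\log t+1)$ for $1000\le t\le 8\,528$, which sharpens the divisor-sum estimate~\eqref{eq:logisin} in Proposition~\ref{cor:M2Bound} and lowers the threshold to $t>1054$ --- before doing the finite computation, exactly the kind of constant-tightening you anticipate in your last paragraph.
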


	\begin{proof}
		Elementary calculus shows that the inequality
		\[
			10\cdot \frac{3}{2}\cdot t(\log t + 1) + 15\cdot 9.0004 \cdot \frac{2}{3} \cdot t (\log t + 1) < 4\cdot 5.2488 \cdot \left(\frac{2}{3}\right)^{3/2}\cdot  t^{3/2}
		\]
holds for all integers $t > 8\,528$.  Applying Propositions~\ref{cor:M1Bound}, and~\ref{cor:M2Bound} we obtain the inequality~\eqref{eq:KeyInequality} for integers $t > 8\,528$ such that $t \equiv 2$ or $11 \bmod 12$. To improve the bound on $t$, we verify that for integers $1000 \leq t \leq 8\,528$ the inequality
		\[
			\sum_{a\mid t} \frac{1}{a} < 0.444\cdot (\log(t) + 1)
		\]
holds (this is an easy computer calculation). This allows us to improve the estimate~\eqref{eq:logisin} in this range of $t$, and hence we can sharpen Proposition~\ref{cor:M2Bound}, multiplying the right hand side of the inequalities given there by a factor of $0.444$. It is then natural to the consider instead the inequality
		\[
			10\cdot \frac{3}{2}\cdot 0.444\cdot t(\log t + 1) + 15\cdot 9.0004 \cdot 0.444\cdot \frac{2}{3} \cdot t (\log t + 1) < 4\cdot 5.2488 \cdot \left(\frac{2}{3}\right)^{3/2}\cdot  t^{3/2},
		\]
which holds provided $t > 1054$. Thus we improve the range for which~\eqref{eq:KeyInequality} holds to integers $t > 1054$. Finally, we explicitly calculate the initial terms of difference of theta series
		\[
			\begin{split}
				&4\Theta_{M_1 \cap T}(q) - 10\Theta_{M_2\cap T}(q) - 15\Theta_{M_3\cap T}(q) \\
				 = &\sum_{k = 0}^\infty \left(4N_{M_1 \cap T}\left(k/3\right) - 10N_{M_2\cap T}\left(k/3\right) - 15N_{M_3\cap T}\left(k/3\right)\right)q^{k/3}
			\end{split}
		\]
on a computer, up to $k = 2\cdot 1054$, and observe that the coefficients of $q^{k/3}$ with $46 \leq k \leq 2\,108$ and $k \equiv 4$ or $10 \bmod 12$ are positive. Set $k = 2t$. We conclude that if $t > 20$ is an integer congruent to $2$ or $11 \bmod 12$ then~\eqref{eq:KeyInequality} holds. The {\tt magma}~\cite{Magma} script {\tt ThetaFunctionComputations} verifying this computation is included in the {\tt arXiv} distribution of this article.
	\end{proof}

	\begin{theorem}
		\label{thm:Dis2mod6}
		Let $n > 18$ be an integer such that $n \notin \{20,21,25\}$, and set $d = 6n + 2$.  Then there exists an embedding of lattices $K_d^\perp(-1) \hookrightarrow L_{2,26}$ such that
		\begin{equation}
			\label{eq:rightnumberofroots}
			0 < \#R_{-2}\left(K_d^\perp(-1)\right) < 14.
		\end{equation}
For $n \in \{14,16,17,18,20,21,25\}$ there is an embedding such that $\#R_{-2}\left(K_d^\perp(-1)\right) = 14$.
	\end{theorem}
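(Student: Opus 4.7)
The plan is to combine Theorem~\ref{thm:reduction} with Lemma~\ref{lem:GHSineq} to handle all sufficiently large $n$ uniformly, and to dispatch the remaining finite set of small $n$ by a direct computer search.

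For $n \gg 0$, my strategy is to exhibit coprime integers $\alpha, \beta$ satisfying~\eqref{eq:notypeIIorIII} for some small $\epsilon > 0$ and such that $t := 3(\alpha\beta - n) - 1$ satisfies $t > 20$ and $t \equiv 2$ or $11 \pmod{12}$ (equivalently $\alpha\beta - n \equiv 0$ or $1 \pmod{4}$). Since the interval $(\sqrt{(1+\epsilon)n},\sqrt{5n/4})$ has length growing like $\sqrt{n}$, for $n$ large it contains coprime pairs realizing every residue class of $\alpha\beta$ modulo $4$, and a short case analysis on the parity of $n$ — perturbing $\alpha$ or $\beta$ by a small integer if necessary — produces such a pair. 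With this choice, Lemma~\ref{lem:onlyTypeI} eliminates Type II and III vectors from $R_\ell$, reducing the problem to counting roots of $E_6(-1)$ orthogonal to $\ell$. Invoking Lemma~\ref{lem:GHSineq} for $m := 2t/3$ to verify~\eqref{eq:latticeineq}, Theorem~\ref{thm:reduction} then produces a vector $v_1^\vee \in M_1 \cap T$ of length $m$ satisfying $\#R(\langle v_1^\vee\rangle^\perp_{E_6}) < 14$. Because $M_1 = \langle a\rangle^\perp_{E_6^\vee}$ for a root $a \in E_6$, one has $\pm a \in R(\langle v_1^\vee\rangle^\perp_{E_6})$, so the lower bound $\#R > 0$ is automatic; the recipe in Problem~\ref{prob:lattices} then converts $v_1^\vee$ into the desired primitive embedding $K_d^\perp(-1) \hookrightarrow L_{2,26}$ satisfying~\eqref{eq:rightnumberofroots}. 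The analytic threshold $t > 1054$ of Lemma~\ref{lem:GHSineq}, combined with the constraint $\alpha\beta \leq 5n/4$, should translate into an explicit bound around $n \geq 1207$.

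For the remaining values $19 \leq n \leq 1206$ with $n \notin \{20,21,25\}$, I would carry out a computer search: for each such $n$, iterate over coprime pairs $(\alpha, \beta)$ satisfying~\eqref{eq:notypeIIorIII} (for a small $\epsilon$ chosen adaptively), enumerate the finite set of vectors in $M_1 \cap T$ of length $m = 2(\alpha\beta - n) - 2/3$ via short-vector enumeration in the rank-five $\Q$-lattice with Gram matrix $B_1$, and test whether some such vector is orthogonal to strictly between $0$ and $14$ of the $72$ roots of $E_6$. For the exceptional cases $n \in \{14,16,17,18,20,21,25\}$ the same search suffices with the strict upper bound $<14$ relaxed to $\leq 14$; in these cases no pair $(\alpha,\beta)$ is expected to produce fewer than $14$ orthogonal roots, but one can still achieve equality.

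The chief obstacles are twofold. First, producing a clean explicit $(\alpha, \beta)$ construction valid for \emph{all} $n$ past some effective threshold requires balancing the narrow range $(\sqrt{(1+\epsilon)n},\sqrt{5n/4})$ against coprimality and the residue condition $\alpha\beta - n \equiv 0$ or $1 \pmod 4$, which is subtle in certain parity classes of $n$. Second, organizing the search so that it exhausts roughly a thousand values of $n$, each with many candidate pairs $(\alpha, \beta)$ and many candidate vectors per pair, in reasonable time is nontrivial — the paper's reference to a multi-day \texttt{Magma} computation is a reminder that, even though each individual test is elementary linear algebra, the cumulative combinatorial size is significant.
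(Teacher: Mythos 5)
Your treatment of the range $n \geq 1207$ is essentially the paper's own argument: take $\beta = \alpha+1$ for coprimality, choose the residue of $\alpha$ mod $4$ according to the residue of $n$ so that $t = 3(\alpha\beta-n)-1 \equiv 2$ or $3 \bmod 4$ (hence $\equiv 2$ or $11 \bmod 12$, since $t \equiv 2 \bmod 3$ automatically), and balance the interval width $\sqrt{5n/4}-\sqrt{(1+\epsilon)n} > 4$ against $3\epsilon n > 21$ to land at $\epsilon = 0.0058$ and $n \geq 1207$. That part is fine.

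There is, however, a genuine gap in your computer search for $19 \leq n \leq 1206$: you restrict the search to pairs $(\alpha,\beta)$ satisfying the inequalities~\eqref{eq:notypeIIorIII}, i.e.\ both $\alpha$ and $\beta$ lying in $\bigl(\sqrt{(1+\epsilon)n},\sqrt{5n/4}\bigr)$ with $\alpha\neq\beta$. That interval has length $(\sqrt{5}/2-1)\sqrt{n}\approx 0.118\sqrt{n}$, which is less than $1$ for all $n \leq 71$; so for roughly $19 \leq n \leq 71$ it contains at most one integer and your search space is \emph{empty}, even though the theorem asserts the result for, e.g., $n = 19, 22, 23, 24, 26,\dots$ (and your $m=14$ cases $n\in\{14,16,17,18\}$ are likewise out of reach). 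The point of~\eqref{eq:notypeIIorIII} is only to kill Type II and III vectors via Lemma~\ref{lem:onlyTypeI}; when it cannot be satisfied you must instead allow arbitrary coprime $\alpha,\beta$ with $n < \alpha\beta < 2n$ and \emph{count} the Type II and III vectors explicitly using the classification after Remark~\ref{rem:types} (a root $r'\in R(E_6(-1))$ with $(v,r')\neq 0$ contributes a Type II vector when $\alpha \mid (v,r')$ and a Type III vector when $\beta \mid (v,r')$, each determined by~$r'$). This is exactly what the paper's search does, and it is why its step (iv) tests divisibility of $(v,r)$ by $\alpha$ and $\beta$ rather than only testing $(v,r)=0$. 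A secondary, less serious restriction is that you enumerate only vectors in $M_1\cap T$ (orthogonal to a fixed root $a$), whereas the paper searches all of $E_6^\vee(-1)$ of the prescribed length; this guarantees $\#R>0$ for free but needlessly shrinks the pool of candidate embeddings for the hardest small $n$.
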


	\begin{proof}
		We apply Theorem~\ref{thm:reduction}. First, we determine the set of positive integers $n$ such that there exist coprime integers $\alpha$ and $\beta$, as well as an $\epsilon > 0$ all satisfying
		\begin{enumerate}
			\item the inequalities~\eqref{eq:notypeIIorIII}:
			\[
				\sqrt{(1 + \epsilon)n} < \alpha < \sqrt{5n/4},\qquad
				\sqrt{(1 + \epsilon)n} < \beta < \sqrt{5n/4}
			\]
			\item the congruence condition $t \equiv 2$ or $3 \bmod 4$, where $t := 3(\alpha\beta - n) -1$.
			\smallskip
			\item the inequality $t > 20$.
		\end{enumerate}
Condition (1) implies that $\epsilon < 1/4$. To guarantee coprimality of $\alpha$ and $\beta$ we impose the condition $\beta = \alpha + 1$. Suppose that
		\begin{equation}
			\label{eq:cond1}
			\sqrt{5n/4} - \sqrt{(1 + \epsilon)n} > 4.
		\end{equation}
Then there always exist two consecutive integers $\alpha$ and $\beta$ in the interval $\left(\sqrt{(1 + \epsilon)n},\sqrt{5n/4}\right)$ such that $(2)$ holds, regardless of the congruence class of $n$ modulo $4$. If, in addition, the inequality
		\begin{equation}
			\label{eq:cond2}
			3\epsilon n > 21
		\end{equation}
holds, then condition $(3)$ is satisfied, because
		\[
			t = 3\left(\alpha(\alpha + 1) - n\right) - 1 > 3(\alpha^2 -n) - 1 > 3\epsilon n - 1 > 20.
		\]
Simple optimization shows that if $\epsilon = 0.0058$, then for $n \geq 1207$ both~\eqref{eq:cond1} and~\eqref{eq:cond2} are satisfied, and this is the smallest integral value of $n$ that works. Thus, for $n \geq 1207$, we may apply Theorem~\ref{thm:reduction} (taking into account Lemma~\ref{lem:GHSineq}) to conclude.

		For $n < 1207$, we find an embedding $K_d^\perp(-1) \hookrightarrow L_{2,26}$ using the procedure outlined at the beginning of \S\ref{S:latticeEng}. Let $\langle a_1,a_2\rangle$ be a sublattice of $E_8(-1)$ of type $A_2(-1)$, and note that $A_2(-1)^\perp_{E_8(-1)} \isom E_6(-1)$.  We reduced the problem of finding an embedding satisfying~\eqref{eq:rightnumberofroots} to finding a vector $\ell \in U \oplus E_8(-1)$ of the form
		\[
			\ell = \alpha e + \beta f + v,\qquad\textrm{where } U = \langle e,f\rangle, v \in E_8(-1), \textrm{ and }\alpha, \beta \in \Z \textrm{ coprime},
		\]
of length $2n$, satisfying $(v,a_1) = 0$ and $(v,a_2) = 1$, in such a way that the set
		\[
			R_\ell = \left\{r \in U \oplus E_6(-1) : r^2 = -2, (r,\ell) = 0\right\}
		\]
is not empty and has $< 14$ elements (note that $R_\ell$ is finite because the orthogonal complement of $A_2(-1) + \langle\ell\rangle$ in $U\oplus E_8(-1)$ is a definite lattice).  As in \S\ref{ss:PrelimReds}, we decompose $v = v_1^\vee + v_2^\vee$, with $v_1^\vee \in E_6(-1)^\vee$ and $v_2^\vee = -(a_1+2a_2)/3 \in A_2(-1)^\vee$, so that $v_1^\vee$ has length $2(n - \alpha\beta) + 2/3$. The reason for this is, roughly speaking, that $E_6(-1)^\vee$ has many fewer vectors of length $2(n - \alpha\beta) + 2/3$ than $E_8(-1)$ has of length $2(n - \alpha\beta)$; this will speed up tremendously our computer searches for $v_1^\vee$, and hence $v$.

		By definition of $v_2^\vee$, the set 
		\begin{equation}
			\label{eq:countingusingv1star}
			\left\{r \in U \oplus E_6(-1) : r^2 = -2, (r,v_1^\vee) = 0\right\}
		\end{equation}
coincides with $R_\ell$.  The discussion following Remark~\ref{rem:types} shows that if $n < \alpha\beta < 2n$, then there are three types of vectors in $R_{\ell}$. Thus, for a \emph{fixed} $n < 1207$, to find a vector $v_1^\vee \in E_6^\vee(-1)$ such that the set~\eqref{eq:countingusingv1star} is not empty and has $< 14$ elements, we proceed as follows:
		\begin{enumerate}
			\item[(0)] Fix, once an for all, an embedding of $E_8(-1) \subseteq \Z^8$ (using, e.g.,~\cite[p.\ 121]{CS}), as well as a sublattice $\langle a_1,a_2\rangle = A_2(-1) \subseteq E_8(-1)$. Compute its orthogonal complement $E_6(-1) \subseteq E_8(-1)$ and the dual $E_6^\vee(-1)$ of this complement. Compute the set of roots $R(E_6(-1)) = \{r \in E_6(-1) : r^2 = -2\}$. Let $v_2^\vee = -(a_1 + 2a_2)/3 \in A_2^\vee(-1)$.
			\smallskip
			\item[(i)] Find positive integers $\alpha$ and $\beta$ such that $\gcd(\alpha,\beta) = 1$ and $n < \alpha\beta < 2n$. 
			\smallskip
			\item[(ii)] Compute the set $W := \{w \in E_6^\vee(-1) : w^2 = 2(n - \alpha\beta) + 2/3\}$.
			\smallskip
			\item[(iii)] Fix $w \in W$. Check that $w + v_2^\vee \in E_8(-1)$; if not, then replace $w$ with ${-w}$. Set $v = w + v_2^\vee$. Initialize a counter $m = 0$ for $N\left(K_d^\perp(-1)\right) = \#R_{-2}\left(K_d^\perp(-1)\right)/2$.
			\smallskip
			\item[(iv)] Fix $r \in R(E_6(-1))$.
			\smallskip
			\begin{itemize}
				\item If $(v,r) = 0$ then $\pm r \in R_{-2}\left(K_d^\perp(-1)\right)$ are Type I vectors. Add $1$ to $m$.
				\smallskip
				\item If $(v,r) \neq 0$, $(v,r) \equiv 0 \bmod \alpha$, and $(v,r) \not\equiv 0 \bmod \beta$, then 
				\begin{equation}
					\label{eq:TypeIIalg}
					\pm \left(-\frac{(v,r)}{\alpha}f + r \right) \in R_{-2}\left(K_d^\perp(-1)\right)
				\end{equation}
are Type II vectors. Add $1$ to $m$.
				\smallskip
				\item If $(v,r) \neq 0$, $(v,r) \not\equiv 0 \bmod \alpha$, and $(v,r) \equiv 0 \bmod \beta$, then 
				\begin{equation}
					\label{eq:TypeIIIalg}
					\pm \left(-\frac{(v,r)}{\beta}e + r \right) \in R_{-2}\left(K_d^\perp(-1)\right)
				\end{equation}
are Type III vectors. Add $1$ to $m$.
				\smallskip
				\item If $(v,r) \neq 0$, $(v,r) \equiv 0 \bmod \alpha$, and $(v,r) \equiv 0 \bmod \beta$, then~\eqref{eq:TypeIIalg} and~\eqref{eq:TypeIIIalg} are Type II and Type III vectors, respectively. Add $2$ to $m$.
			\end{itemize}
Carry out this procedure for each $r \in R(E_6(-1))$.
			\smallskip
			\item[(v)] If $0 < m \leq 7$, then we have succeeded in constructing the desired embedding $K_d^\perp(-1) \hookrightarrow L_{2,26}$. Otherwise, go back to step (iii).
		\end{enumerate}
		\smallskip
Note that if $m = 0$ or $m > 7$ for every $w \in W$, then we may go back to Step (i) and find a new pair of integers $\alpha$ and $\beta$ to work with. We implemented the above procedure in {\tt Magma}, resulting in a constructive proof of the existence of an embedding $K_d^\perp(-1) \hookrightarrow L_{2,26}$ with $0 < m < 7$ for $18 < n < 1207$ and $n \notin \{20,21,25\}$. Moreover, for $n \in \{14, 16, 17, 18, 20, 21, 25\}$, we found embeddings with $m = 7$, as desired. The {\tt Magma}~\cite{Magma} script {\tt Check6nplus2} that verifies these claims is included in the {\tt arXiv} distribution of this article. The script loads lists of explicit embeddings encoded in the form $(n,\alpha,\beta,v)$ for the relevant values of $n < 1207$.
	\end{proof}

	\begin{proof}[Proof of Theorem~\ref{thm:MainThm} (1)]
		By Theorem~\ref{thm:Dis2mod6}, if $n > 18$ and $n \notin\{20, 21, 25\}$, there exists some embedding of lattices $K_d^\perp(-1) \hookrightarrow L_{2, 26}$ such that $0 < N\left(K_d^\perp(-1)\right) < 7$. It follows from Theorem~\ref{thm:quasipullback} that the quasi-pullback $\Phi|_{K_d^\perp(-1)}$ of the Borcherds form is a cusp form of weight $<19$ for the stable orthogonal group of $K_d^\perp(-1)$. To apply Theorem~\ref{thm:LowWeightCusp}, we need to verify that this cusp form vanishes along ramification divisors of the projection $\pi : \mathcal D_{K_d^\perp(-1)} \rightarrow \mathcal F_{K_d^\perp(-1)}(\overline{\Gamma}^+_d)$. The proof of \cite[Proposition 8.13]{GHSHandbook} for $K3$ lattices applies to our case without any modification. Indeed, $K3$ lattices and our lattice $K_d^\perp(-1)$ have the same discriminant group, rank, and signature. These are enough to apply the proof.
	\end{proof}

%%%%%%%%%%%%%%%%%%%%%%%%%%%%%%%%%%%%%%%%%%%%%%%%%%%%

\section{The case $d \equiv 0 \bmod 6$}
	\label{s:0mod6}

%%%%%%%%%%%%%%%%%%%%%%%%%%%%%%%%%%%%%%%%%%%%%%%%%%%%

	In this section, we treat the case $d \equiv 0 \bmod6$. Here, the monodromy group $\overline{\Gamma}^+_d$, defined in~\S\ref{ss:ModuliSpecialFourfolds}, is larger than the stable orthogonal group $\widetilde{O}^+(K_d^\perp(-1))$, so this case requires special considerations. Many of our calculations are inspired by the ideas in \cite[\S4]{GHSDifferential}.

%%%%%%%%%%%%%%%%%%%%%%%%%%%%%%%%%%%%%%%%%%%%%%%%%%%%

\subsection{Embeddings of $K_d^\perp(-1)$ into $L_{2,26}$ with desired properties}
	\label{ss:embeddingLwithprops}

	By Proposition~\ref{prop:Kdperp}, when $d\equiv 0 \bmod 6$, the lattice $K_d^\perp(-1)$ is isomorphic to
	\[
		A_2(-1) \oplus \langle 2n \rangle \oplus U \oplus E_8(-1) \oplus E_8(-1).
	\]
The monodromy group $\overline{\Gamma}^+_d$ is a finite index subgroup of $O^+(K_d^\perp(-1))$, and it contains the stable orthogonal group $\widetilde{O}^+(K_d^\perp(-1))$ as a subgroup of index $2$. In fact, the proof of \cite[Proposition 5.2.1]{HassettComp} shows that $\overline{\Gamma}^+_d$ is generated by the stable orthogonal group together with an extra involution $\sigma$ which acts trivially on $A_2(-1) \oplus E_8(-1) \oplus E_8(-1)$ and by negation on $\langle 2n \rangle \oplus U$. We would like to extend $\sigma$ to an involution $\tilde{\sigma} \in O^+(L_{2,26})$, and use the modularity of the Borcherds form $\Phi_{12}$ with respect to $O^+(L_{2,26})$ to show that the quasi-pullback
	\[
		\Phi|_{K_d^\perp(-1)} := \frac{\Phi_{12}(Z)}{\displaystyle \prod_{r \in R_{-2}(K_d^\perp(-1))/\{\pm 1\}}(Z,r)}\Bigg|_{\calD_{K_d^\perp(-1)}^\bullet}
	\]
is modular with respect to $\sigma$. This will allow us to apply the low-weight cusp form trick (Theorem~\ref{thm:LowWeightCusp}) to prove that $\overline{\Gamma}^+_d\backslash \calD_{K_d^\perp(-1)}$ is of general type.

	Recall that we embed $K_d^\perp(-1)$ into $L_{2, 26}$ by taking a summand $U \oplus E_8(-1) \oplus E_8(-1)$ identically into $L_{2,26}$, $A_2(-1)$ into $E_8(-1)$, and the generator $\ell$ of $\langle 2n \rangle$ into $A_2(-1)^\perp_{E_8(-1)} \oplus U \isom E_6(-1) \oplus U$ so that $\ell = \alpha e + \beta f + v$, where $e, f$ is a basis for $U$, and $v \in E_6(-1)$ is a vector of length $v^2 = 2(n - \alpha \beta)$. Since $\sigma$ acts on $\langle\ell\rangle \oplus U$ by negation, we want $\tilde{\sigma}$ to restrict to an involution $J$ of $E_6(-1)$ for which $v$ is a $(-1)$-eigenvector. To prove modularity of $\Phi|_{K_d^\perp(-1)}$, we need $\det(\tilde{\sigma}) = \det(\sigma)$; the former equals $\det(J)$ and the latter is $-1$. Furthermore, to construct $\tilde{\sigma}$, we need the involution $(J,\Id)$ of $E_6(-1)\oplus A_2(-1)$ to extend to an involution of $E_8(-1)$; by~\cite[Corollary~1.5.2]{Nikulin}, $J$ must belong to the stable orthogonal group $\widetilde{O}(E_6(-1))$.

	The group $\widetilde{O}(E_6(-1))$ is equal to the Weyl group $W(E_6(-1))$; see~\cite[p. 125]{CS}. Thus, $J$ can be viewed as an element of order $2$ in $W(E_6(-1))$ with determinant $-1$. Every such element is either a reflection associated to a root of $E_6(-1)$, or the composition of three reflections associated to mutually orthogonal roots. Taking $J$ to be a reflection associated to a root yields embeddings $K_d^{\perp}(-1)\hookrightarrow L_{2,26}$ for which $\#R_\ell \geq 30$; these embeddings are not useful for our purposes. Thus, we take $J$ to be a composition of three reflections associated to mutually orthogonal roots  $s_1$, $s_2$, and $s_3$ of $E_6(-1)$; we note that $J$ is unique up to conjugation. We consider $J$ as a linear transformation of $E_6(-1)\otimes \Q$, and denote the intersection of the $(+1)$ and $(-1)$-eigenspaces of $J$ with $E_6(-1)$ by $E_6^{J,+}$ and $E_6^{J,-}$, respectively.

	By~\cite[Corollary 1.5.2]{Nikulin}, the involution $(-\Id, -\Id, \Id, J, \Id, \Id)$ on
		\[
			U \oplus U \oplus A_2(-1) \oplus E_6(-1) \oplus E_8(-1) \oplus E_8(-1).
		\]
		extends to an involution $J'$ on $L_{2,26}$ and the restriction of $J'$ to $K_d^\perp(-1)$ is equal to $\sigma$. 
		
	\begin{lemma}
		\label{lem:modularforbiggergroup}
		Suppose that $v \in E_6^{J,-}$, and that for every $r \in R_\ell$, we have $J'(r) \neq -r$. Then the function $\Phi|_{K_d^\perp(-1)}$ is modular with respect to $\overline{\Gamma}^+_d$.
	\end{lemma}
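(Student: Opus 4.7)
The plan is to exploit the modularity of the Borcherds form $\Phi_{12}$ with respect to $O^+(L_{2,26})$ and transfer it through the involution $J'$. By Theorem~\ref{thm:quasipullback}, $\Phi|_{K_d^\perp(-1)}$ is already modular of weight $12 + N(K_d^\perp(-1))$ and character $\det$ with respect to $\widetilde{O}^+(K_d^\perp(-1))$. Since $\overline{\Gamma}^+_d$ is generated by $\widetilde{O}^+(K_d^\perp(-1))$ together with $\sigma$, it suffices to establish
\[
	\Phi|_{K_d^\perp(-1)}(\sigma Z) \;=\; \det(\sigma)\,\Phi|_{K_d^\perp(-1)}(Z) \;=\; -\,\Phi|_{K_d^\perp(-1)}(Z) \qquad (Z \in \calD^\bullet_{K_d^\perp(-1)}).
\]

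First I would verify that $J'$ restricts to $\sigma$ on $K_d^\perp(-1)$: the hypothesis $v \in E_6^{J,-}$ yields $J'(\ell) = -\alpha e - \beta f + J(v) = -\ell$, matching the known action of $\sigma$ on $\langle 2n\rangle \oplus U$. This restriction shows that $J'$ preserves $\calD^\bullet_{K_d^\perp(-1)}$, and hence the ambient component $\calD_{L_{2,26}}$, so $J' \in O^+(L_{2,26})$. Modularity of $\Phi_{12}$ then gives $\Phi_{12}(J' Z) = \det(J')\,\Phi_{12}(Z) = -\Phi_{12}(Z)$, where the sign comes from $\det(-\Id_U) = +1$ on each hyperbolic plane and $\det(J) = -1$ (as $J$ is a product of three reflections of $E_6(-1)$). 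Orthogonality of $J'$ together with $J'^2 = \Id$ give $(J' Z, r) = (Z, J'r)$ for every $r$. Because $J'$ preserves $K_d^\perp(-1)$ it permutes $R_{-2}(K_d^\perp(-1))$ and descends to an involution $\pi$ of $R_{-2}(K_d^\perp(-1))/\{\pm 1\}$; choosing representatives $r_1,\ldots,r_N$, we may write $J'(r_i) = \epsilon_i\, r_{\pi(i)}$ with $\epsilon_i \in \{\pm 1\}$, so that
\[
	\prod_{i=1}^N (J' Z, r_i) = \Big(\prod_{i=1}^N \epsilon_i\Big) \prod_{i=1}^N (Z, r_i).
\]

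The crux is to show that $\prod_i \epsilon_i = +1$, and this is where the hypothesis enters in an essential way. The involution $\pi$ decomposes into fixed points and transpositions. For a transposition $\{i, j\}$, applying $J'^2 = \Id$ to $r_i$ gives $\epsilon_i \epsilon_j = +1$, so transpositions contribute trivially. For a fixed point $i$ one has $J'(r_i) = \pm r_i$; the hypothesis $J'(r) \neq -r$ for all $r \in R_\ell = R_{-2}(K_d^\perp(-1))$ rules out the negative sign and forces $\epsilon_i = +1$. The product $\prod_i \epsilon_i$ is manifestly independent of the choice of representatives, since flipping $r_i \mapsto -r_i$ either flips $\epsilon_i$ and $\epsilon_{\pi(i)}$ simultaneously in a $2$-cycle, or leaves $\epsilon_i$ unchanged at a fixed point. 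Combining the three computations gives
\[
	\Phi|_{K_d^\perp(-1)}(\sigma Z) = \frac{\Phi_{12}(J' Z)}{\prod_i (J' Z, r_i)} = \frac{-\Phi_{12}(Z)}{\prod_i (Z, r_i)} = -\Phi|_{K_d^\perp(-1)}(Z),
\]
as required. The main obstacle is this sign analysis; once it is in hand, everything else is a transparent pullback of Borcherds' transformation law along the embedding $\calD^\bullet_{K_d^\perp(-1)} \hookrightarrow \calD^\bullet_{L_{2,26}}$.
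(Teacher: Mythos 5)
Your proposal is correct and follows essentially the same route as the paper: restrict the involution $J'$ (which extends $\sigma$ and has $\det J' = -1 = \det\sigma$) to the period domain, invoke modularity of $\Phi_{12}$ for $O^+(L_{2,26})$, and use the hypothesis $J'(r)\neq -r$ to show the denominator $\prod_{r\in R_\ell/\{\pm1\}}(Z,r)$ is $J'$-invariant. The paper packages your explicit $\epsilon_i$ sign bookkeeping as the choice of a $J'$-stable set of representatives for $R_\ell/\{\pm1\}$, which is the same argument.
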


	\begin{proof}
		Let us introduce coordinates $Z_1$, $z_2$, $Z_3 \in \mathcal D_{L_{2,26}}^\bullet$ corresponding to three direct summands of the sublattice of $L_{2,26}$ given by
		\[
			(U \oplus E_8(-1)^{\oplus2} \oplus A_2(-1) ) \oplus \langle \ell \rangle \oplus \langle \ell\rangle^\perp_{U \oplus E_6(-1)}.
		\]
Since $J'(r) \neq -r$ for all $r \in R_\ell$, we may choose a set of representatives for $R_\ell/\{\pm1\}$ that is stable under $J'$. Using this set of representatives, we see that
		\[
			\prod_{r \in R_l/\{\pm 1\}} (Z_1 + z_2 + Z_3, r)= \prod_{r \in R_l/\{\pm 1\}} (Z_3, r) = \prod_{r \in R_l/\{\pm 1\}} (J'(Z_3), J'(r)) = \prod_{r \in R_l/\{\pm 1\}} (J'(Z_3), r).
		\]
Now we proceed as follows:
		\begin{align*}
			\Phi|_{K_d^\perp(-1)}(\sigma (Z_1 + z_2)) 
			&= \frac{\Phi_{12}(\sigma(Z_1 + z_2) + Z_3)}{\prod_{r \in R_l/\{\pm 1\}} (\sigma(Z_1 + z_2) + Z_3, r)} \\
			&= \frac{\Phi_{12}(\sigma(Z_1 + z_2) + J'(Z_3))}{\prod_{r \in R_l/\{\pm 1\}} (\sigma(Z_1 + z_2) + J'(Z_3), r)} \\
			&= \frac{\Phi_{12}(\sigma(Z_1 + z_2) + J'(Z_3))}{\prod_{r \in R_l/\{\pm 1\}} (Z_3, r)} \Bigg|_{\mathcal D_{K_d^\perp(-1)}}\\
			&=  \frac{\Phi_{12}(J'(Z_1 + z_2 + Z_3))}{\prod_{r \in R_l/\{\pm 1\}} (Z_3, r)} \Bigg|_{\mathcal D_{K_d^\perp(-1)}}\\
			&=  \frac{\det(J')\Phi_{12}(Z_1 + z_2 + Z_3)}{\prod_{r \in R_l/\{\pm 1\}} (Z_3, r)} \Bigg|_{\mathcal D_{K_d^\perp(-1)}}\\
			&= -\Phi|_{K_d^\perp(-1)}(Z_1 + z_2) = \det(\sigma)\Phi|_{K_d^\perp(-1)}(Z_1 + z_2). \tag*{\qed}
		\end{align*}
		\hideqed
	\end{proof}

	To apply Lemma~\ref{lem:modularforbiggergroup} in the sequel, we shall need a sufficient condition to guarantee that $J'(r) \neq {-r}$ for all $r \in R_\ell$, at least when $d \gg 0$. 
	
	\begin{lemma}
		\label{lem:pexists}
		Let $\ell = \alpha e + \beta f + v$, with $v \in E_6^{J,-}$, i.e., $v$ is an integral linear combination of the roots $s_1$, $s_2$, and $s_3$.  Assume further that $v$ is not an integral combination of two roots. Suppose that $\alpha$ and $\beta$ satisfy the hypotheses of Lemma~\ref{lem:onlyTypeI}. Then $J'(r) \neq -r$ for all $r \in R_\ell$.
	\end{lemma}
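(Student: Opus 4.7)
The plan is to combine Lemma~\ref{lem:onlyTypeI} with an analysis of the roots of $E_6(-1)$ lying in the $(-1)$-eigenspace of $J$ on $E_6(-1)\otimes\Q$. Under the hypotheses of Lemma~\ref{lem:onlyTypeI}, every $r \in R_\ell$ is a Type I vector, and hence a root of $E_6(-1)$ (its $e$- and $f$-components vanish); the orthogonality $(r,\ell) = 0$ thereby reduces to $(r,v) = 0$. Since $J'$ restricts to $J$ on the $E_6(-1)$ summand, the equation $J'(r) = -r$ is equivalent to $r$ lying in $E_6^{J,-}\otimes\Q$, which is the $\Q$-span of $s_1, s_2, s_3$. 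The task then becomes: show that no root of $E_6(-1)$ lying in this span can be orthogonal to $v$.

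I would write such a hypothetical root as $r = c_1 s_1 + c_2 s_2 + c_3 s_3$ with $c_i \in \Q$. Integrality of the pairings $(r,s_i) = -2 c_i$ (for $r, s_i \in E_6(-1)$) forces $c_i \in \tfrac{1}{2}\Z$, while $r^2 = -2$ yields $c_1^2 + c_2^2 + c_3^2 = 1$. Setting $d_i := 2 c_i \in \Z$, the Diophantine equation $d_1^2 + d_2^2 + d_3^2 = 4$ has only permutations of $(\pm 2, 0, 0)$ as integer solutions, so $r = \pm s_i$ for some $i \in \{1,2,3\}$. Expanding $v = a_1 s_1 + a_2 s_2 + a_3 s_3$ with every $a_i \neq 0$ (by the hypothesis that $v$ is not an integral combination of only two of the $s_j$'s), we obtain $(r,v) = \mp 2 a_i \neq 0$, contradicting $(r,v) = 0$.

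The only mildly subtle point is that the sublattice $\Z s_1 \oplus \Z s_2 \oplus \Z s_3$ may be strictly smaller than $E_6^{J,-}$, so a priori the coefficients $c_i$ need only be rational; the integrality constraint coming from pairing against the $s_i$ themselves pins them down to $\tfrac{1}{2}\Z$, which in turn leaves only the six candidate roots $\pm s_1, \pm s_2, \pm s_3$. This half-integer enumeration is the crux of the argument, but it is a short explicit calculation.
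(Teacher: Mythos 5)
Your proof is correct and follows essentially the same route as the paper's: Lemma~\ref{lem:onlyTypeI} forces every $r \in R_\ell$ to be a root of $E_6(-1)$ with $(r,v)=0$, the condition $J'(r)=-r$ places $r$ in the $(-1)$-eigenspace, the only roots there are $\pm s_1, \pm s_2, \pm s_3$, and orthogonality to such a root would make $v$ a combination of only two of the $s_i$. The only difference is that you explicitly justify, via the half-integer coefficient enumeration $d_1^2+d_2^2+d_3^2=4$, the claim that the roots of $E_6^{J,-}$ are exactly $\{\pm s_1,\pm s_2,\pm s_3\}$, which the paper asserts without proof.
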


	\begin{proof}
		By Lemma~\ref{lem:onlyTypeI}, we know that $R_\ell$ contains no Type II or III vectors, i.e., every $r \in R_\ell$ is a root in $E_6(-1)$. In this case $J'(r) = J(r)$, and $(r,\ell) = 0$ implies that $(r,v) = 0$. If $J(r) = -r$, then $r \in E_6^{J,-}$, and since $r$ is a root, we must have $r = \pm s_i$ for some $1\leq i \leq 3$. But then the equality $(r,v) = 0$ implies that $v$ must be an integral combination of at most two roots, because the only roots in $E_6^{J,-}$ are $\{\pm s_1, \pm s_2, \pm s_3\}$.
	\end{proof}

	To certify the primitivity of the embedding $K_d^\perp(-1) \hookrightarrow L_{2,26}$, we could assume that $(\alpha,\beta) = 1$. This works well for $n \gg 0$, but we need greater flexibility in our choice of $\alpha$ and $\beta$ when $n$ is small. The following lemma records a more flexible criterion to certify primitivity of the embedding we are looking for.
	
	\begin{lemma}
		\label{lem:primitivity}
		Let $\ell = \alpha e + \beta f + v$; assume that $3 \nmid (\alpha,\beta)$, and that $v$ is a primitive vector in $E_6(-1)$. Then the embedding 
		\[
			B_n \oplus U \oplus E_8(-1)^{\oplus 2} = K_d^\perp(-1) \hookrightarrow L_{2,26} = U^{\oplus 2} \oplus E_8(-1)^{\oplus 3}
		\] 
		given by mapping $U \oplus E_8(-1)^{\oplus 2}$ identically and taking the generators~\eqref{eq:Bn} of $B_n$ to $a_1$, $a_2$ and $\ell$, respectively, is primitive.
	\end{lemma}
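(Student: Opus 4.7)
The plan is to reduce primitivity of the full embedding to primitivity of the rank-three sublattice $\Z a_1 + \Z a_2 + \Z \ell$ inside the remaining summand $U \oplus E_8(-1)$ of $L_{2,26}$ (where $U = \langle e, f\rangle$ is the hyperbolic plane not covered by the identity embedding, and the $E_8(-1)$ here is the third copy, containing $A_2(-1) = \langle a_1, a_2\rangle$). The summand $U \oplus E_8(-1)^{\oplus 2}$ is embedded via the identity and is automatically saturated in its target, so primitivity of the whole embedding reduces to showing that $\Z a_1 + \Z a_2 + \Z \ell$ is saturated in $U \oplus E_8(-1)$. Concretely, I will take $\xi = s' a_1 + t' a_2 + r' \ell$ with $s', t', r' \in \Q$ and $\xi \in U \oplus E_8(-1)$, and prove $s', t', r' \in \Z$.

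Since $\ell = \alpha e + \beta f + v$ with $v \in E_6(-1) = A_2(-1)^\perp_{E_8(-1)}$, the orthogonal decomposition $E_8(-1)\otimes\Q = (A_2(-1)\otimes\Q) \oplus (E_6(-1)\otimes\Q)$ translates $\xi \in U \oplus E_8(-1)$ into three conditions:
\begin{enumerate}
\item[(i)] $r'\alpha,\ r'\beta \in \Z$;
\item[(ii)] $r' v \in E_6(-1)^\vee$ and $s' a_1 + t' a_2 \in A_2(-1)^\vee$;
\item[(iii)] the classes of $r'v$ and $s'a_1 + t'a_2$ correspond under the $\Z/3$-gluing isomorphism $\phi\colon D(A_2(-1))\xrightarrow{\sim} D(E_6(-1))$ defining the index-three extension $A_2(-1)\oplus E_6(-1)\subset E_8(-1)$.
\end{enumerate}

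The heart of the argument is to analyse the divisibility of $v$ inside $E_6(-1)^\vee$. Let $\widetilde{v}$ be the primitive generator of the rank-one sublattice $\Q v \cap E_6(-1)^\vee$ of $\Q v$, so that $v = m\widetilde{v}$ for some positive integer $m$. I will establish the dichotomy $m\in\{1,3\}$ as follows. If $m>1$, then $\widetilde{v}\notin E_6(-1)$ (else $v$ would fail to be primitive in $E_6(-1)$), so $[\widetilde{v}]$ has order $3$ in $D(E_6(-1))\cong\Z/3$, which forces $3\mid m$; and if in addition $m>3$, writing $m=3m'$ with $m'>1$ gives $v=m'\cdot(3\widetilde{v})$ with $3\widetilde{v}\in E_6(-1)$ and $m'>1$, contradicting primitivity of $v$. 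Under this dichotomy, condition (ii) is equivalent to $r'\in(1/m)\Z$. When $m=1$ this immediately gives $r'\in\Z$; when $m=3$, a non-integer $r'=p/3$ with $\gcd(p,3)=1$ forces $3\mid\alpha$ and $3\mid\beta$ via condition (i), contradicting the hypothesis $3\nmid\gcd(\alpha,\beta)$. Hence $r'\in\Z$ in all cases.

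Once $r'\in\Z$, the vector $r'v$ lies in $\Z v\subseteq E_6(-1)$ and thus has trivial class in $D(E_6(-1))$; the gluing condition (iii) then forces $s'a_1+t'a_2$ to have trivial class in $D(A_2(-1))$, i.e., $s'a_1+t'a_2\in A_2(-1)=\Z a_1+\Z a_2$, and the $\Q$-linear independence of $a_1,a_2$ gives $s',t'\in\Z$. The main obstacle is the dichotomy $m\in\{1,3\}$ together with its interplay with the $\Z/3$-gluing in $E_8$; this is where the two hypotheses --- primitivity of $v$ in $E_6(-1)$ and $3\nmid\gcd(\alpha,\beta)$ --- combine in an essential way, the first restricting $r'$ to $(1/3)\Z$ in the worst case and the second eliminating the non-integral possibility.
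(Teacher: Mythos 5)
Your proof is correct and follows essentially the same route as the paper's: both reduce to the summand $U\oplus E_8(-1)$ and exploit the index-three gluing $E_8(-1)/(A_2(-1)\oplus E_6(-1))\cong \Z/3\Z$, with primitivity of $v$ confining the possible denominator to $3$ and the hypothesis $3\nmid\gcd(\alpha,\beta)$ eliminating it. The paper phrases this as a divisibility argument on primitive vectors $cu+k\ell$ rather than your saturation/dual-lattice formulation, but the content is identical.
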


	\begin{proof}
		The image of $B_n$ is contained in $U \oplus A_2(-1) \oplus E_6(-1)$, where $A_2(-1) = \langle a_1, a_2 \rangle$, and $E_6(-1) = A_2(-1)^\perp_{E_8(-1)}$.
		Let $u \in \langle a_1,a_2\rangle$ be a primitive vector, and let $c$ and $k$ be relatively prime integers. It suffices to show that the vector $cu + k\ell$ in the image of the embedding described is primitive. Suppose that $cu + k\ell$ is divisible by a positive integer $m$ in $U \oplus A_2(-1) \oplus E_6(-1)$. Then $m \mid k\cdot(\alpha,\beta)$ and $m$ divides $cu + kv$ in $A_2(-1)\oplus E_6(-1) \subseteq E_8(-1)$. Using~\cite[\S1.5]{Nikulin}, together with $D(A_2(-1)) \isom D(E_6(-1)) = \Z/3\Z$, we compute
		\[
			\frac{E_8(-1)}{A_2(-1)\oplus E_6(-1)} \isom \Z/3\Z.
		\]
		Hence $m \mid 3$, and the hypothesis that $3\nmid (\alpha,\beta)$ gives $m \mid k$. In turn this implies that $m \mid c$, so $m = 1$, because $c$ and $k$ are relatively prime.
	\end{proof}

	\begin{lemma}
		\label{lemma:lessthan14}
		Let $v \in E_6^{J,-}$ be a vector length $2(\alpha \beta -n)$. If $v$ is not an integral combination of two roots, then the number of roots in $E_6(-1)$ orthogonal to $v$ is less than 14.
	\end{lemma}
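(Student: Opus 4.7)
My plan is to classify the roots of $E_6$ according to the triple $\mu(r):=((r,s_1),(r,s_2),(r,s_3))$, count them by $\mu$-type, and then determine which ones survive the orthogonality condition $(r,v)=0$ under the hypothesis.

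First I will set up coordinates. By the proof of Lemma~\ref{lem:pexists}, $v=v_1 s_1+v_2 s_2+v_3 s_3$ with $v_i\in\Z$. If any $v_i=0$ then $v\in\Z s_j+\Z s_k$, contradicting the hypothesis; so all $v_i\neq 0$. Replacing some $s_i$ by $-s_i$ (which leaves $J$ unchanged) and permuting, I may assume $0<v_1\leq v_2\leq v_3$.

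Next I will enumerate the roots of $E_6$ by $\mu$-type. For any root $r\neq\pm s_i$, Cauchy--Schwarz gives $\mu_i(r)\in\{-1,0,1\}$. Using the chain of sub-root-systems
\[
    R(E_6)\supset s_3^\perp\cap R(E_6)=A_5\supset s_2^\perp\cap R(A_5)=A_3\supset s_1^\perp\cap R(A_3)=A_1,
\]
together with the action of the elementary subgroup $(\Z/2\Z)^3\subset W(E_6)$ generated by the reflections $w_{s_i}$ (which flips the sign of $\mu_i$ and fixes the other entries, hence acts freely on roots with $\mu_i\neq 0$), I will establish: $2$ roots have $\mu=(0,0,0)$; the $\pm s_i$ account for $6$ roots; $24$ roots have exactly one $|\mu_i|=1$ ($8$ per position); $24$ roots have exactly two $|\mu_i|=1$ ($8$ per pair, uniformly distributed as $2$ roots per sign pattern); and the remaining $16$ roots have all $|\mu_i|=1$ (uniformly $2$ roots per sign pattern in $\{\pm 1\}^3$). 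The key uniformity statements all follow from transitivity of $(\Z/2\Z)^3$ on the relevant sign patterns together with orbit--stabilizer, since the stabilizer of any such root in this subgroup is trivial.

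The orthogonality condition $(r,v)=\sum\mu_iv_i=0$ then splits cleanly. The $(0,0,0)$-type always contributes $2$; the $\pm s_i$ and single-$|\mu_i|=1$ types contribute $0$ because $v_i>0$; the two-$|\mu_i|=1$ type at pair $(i,j)$ contributes $4$ iff $v_i=v_j$ (exactly two of the four sign patterns then solve $\pm v_i\pm v_j=0$); and the all-$|\mu_i|=1$ type contributes $4$ iff $v_3=v_1+v_2$, because the other signed sums $\pm v_1\pm v_2\pm v_3=0$ are incompatible with $0<v_1\leq v_2\leq v_3$. A brief case analysis in the shape of $(v_1,v_2,v_3)$ then bounds the total by $2$, $6$, $10$, or $14$, with $14$ occurring precisely when $v_1=v_2=v_3$.

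It remains to rule out the all-equal case via the hypothesis. From the count above there are exactly two distinct roots $r_1,r_2\in E_6$ with $\mu=(1,1,1)$. Setting $r_2':=s_1+s_2+s_3-r_1\in E_6$, a direct calculation gives $(r_2')^2=6-2\cdot 3+2=2$ and $(r_2',s_i)=2-1=1$, so $r_2'$ is a root with $\mu=(1,1,1)$; and $r_2'\neq r_1$ since $(s_1+s_2+s_3)/2$ has norm $3/2\neq 2$. Hence $r_2'=r_2$, and setting $v_1=v_2=v_3=:a$ gives
\[
    v=a(s_1+s_2+s_3)=ar_1+ar_2\in\Z r_1+\Z r_2,
\]
an integer combination of two roots of $E_6$, contradicting the hypothesis. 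Therefore the count of roots orthogonal to $v$ is at most $10<14$. The main technical point I anticipate is justifying the counts by $\mu$-type via the chain of orthogonal complements, especially the uniform-distribution statement for sign patterns via the Weyl-subgroup action.
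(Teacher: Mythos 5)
Your proof is correct, and it takes a genuinely different route from the paper's. The paper argues by contradiction: if at least $14$ roots of $E_6$ were orthogonal to $v$, then $\langle v\rangle^\perp_{E_6}$ would contain one of the eight root lattices of rank at most $5$ having at least $14$ roots ($A_5$, $D_5$, $A_4\oplus A_1$, $D_4\oplus A_1$, $A_3\oplus A_2$, $A_4$, $D_4$, $A_1\oplus A_3$), and each possibility is eliminated by analyzing the conjugacy classes of the restriction of $J$ to that root lattice, together with embedding obstructions such as $A_1^{\oplus 5}\not\subseteq E_6$. You instead compute the number of orthogonal roots exactly: stratifying the $72$ roots by the triple $((r,s_1),(r,s_2),(r,s_3))$ via the chain $E_6\supset A_5\supset A_3\supset A_1$ of successive orthogonal complements gives strata of sizes $2$, $6$, $3\cdot 8$, $3\cdot 8$ and $16$, and the group $\langle w_{s_1},w_{s_2},w_{s_3}\rangle\isom(\Z/2\Z)^3$, whose stabilizer at a root $r$ is generated by the $w_{s_j}$ with $(r,s_j)=0$, distributes each stratum uniformly over sign patterns; all of these counts check out. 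After normalizing $0<v_1\le v_2\le v_3$ the number of orthogonal roots equals $2+4\,\#\{i<j:v_i=v_j\}+4\cdot\delta_{v_3,\,v_1+v_2}$, hence lies in $\{2,6,10,14\}$, with $14$ attained only when $v_1=v_2=v_3$; and that case is correctly excluded because $s_1+s_2+s_3=r_1+r_2$ for the two roots with pattern $(1,1,1)$, so $v$ would be an integral combination of two roots. Your argument is more elementary (no classification of involutions in $O(D_4)$, $O(D_5)$, $O(A_3)$ is needed) and yields strictly more: an exact formula that pinpoints when the extremal value $14$ occurs, which is precisely the borderline situation exploited for $n\in\{17,23,25,32\}$ in Theorem~\ref{thm:Dis0mod6}. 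It also reveals that the condition $x_1x_2x_3\neq 0$ in the discussion preceding that theorem is strictly weaker than the hypothesis of the present lemma, since $x_1=x_2=x_3\neq 0$ makes $v$ an integral combination of the two roots $r_1$, $r_2$.
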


	\begin{proof}
		To avoid clutter, throughout this proof we shall suppress the ``$(-1)$'' in \emph{all} lattices, so for example, the lattice $E_6(-1)$ will be denoted $E_6$. 

		Suppose that there are at least $14$ roots in $E_6$ orthogonal to $v$.
The root lattices of rank at most $5$ having at least $14$ roots are 
		\[
			A_5, \quad D_5, \quad A_4\oplus A_1, \quad D_4 \oplus A_1, \quad A_3 \oplus A_2, \quad A_4, \quad D_4, \quad A_1 \oplus A_3.
		\]
For each one of these lattices $L$, we show that the containment $v \in L$ leads to a contradiction.

		\begin{enumerate}
			\item $A_3 \oplus A_2$ and $D_4 \oplus A_1$: These root lattices are not contained in $E_6$; see~\cite[Lemma 5.5]{GHSDifferential}.
			\smallskip
			\item $A_4$ and $A_1 \oplus A_4$: The involution $J$ preserves $A_4 \subseteq \langle v\rangle^\perp_{E_6}$, and the signature of the restriction $J|_{A_4}$ is either $(3,1)$ or $(2,2)$. Suppose first that the signature is $(3,1)$. Then the restriction $J|_{A_4}$ is a reflection with respect to a root; hence $v$ is orthogonal to a root which is a $(-1)$-eigenvector of $J$. This means that $v$ is a linear combination of two roots, a contradiction.

			Suppose next that the signature of $J|_{A_4}$ is $(2,2)$. Assume that the induced action of $J|_{A_4}$ on the discriminant group $D(A_4)$ is nontrivial. In this case, there are two mutually orthogonal roots $b_1$, and $b_2$ such that
			\[
				J|_{A_4} = -\sigma_{b_1} \sigma_{b_2},
			\]
where $\sigma_{b_1}$ and $\sigma_{b_2}$ are reflections with respect to $b_1$, $b_2$ respectively. Then $s_1$, $s_2$, $s_3$ are $(-1)$-eigenvectors of $J$ and $b_1$, $b_2$ are $(+1)$-eigenvectors. This implies that $s_1$, $s_2$, $s_3$, $b_1$, and $b_2$ are mutually orthogonal, whence $E_6$ contains $A_1^{\oplus 5}$. We claim that this is impossible. Assume to the contrary that $A_1^{\oplus 5}$ is contained in $E_6$. The lattice $M_4 = (A_1 \oplus A_1)_{E_6}^\perp$ contains $A_3$ as its root system, giving an inclusion of $A_1^{\oplus 3}$ in $A_3$. Taking the inclusions
			\[
				A_1^{\oplus 3} \subseteq A_3 \subseteq A_3^\vee \subseteq (A_1^{\oplus 3})^\vee
			\] 
modulo $A_1^{\oplus 3}$, we conclude that $D(A_1^{\oplus 3}) = (A_1^{\oplus 3})^\vee/A_1^{\oplus 3} \isom (\Z/2\Z)^3$ contains a subgroup $A_3^\vee/A_1^{\oplus 3}$ whose quotient by $A_3/A_1^{\oplus 3}$ is isomorphic to $D(A_3) \isom \Z/4\Z$, which is clearly impossible.

			Finally, assume that the induced action of $J|_{A_4}$ on the discriminant group $D(A_4)$ is trivial. Then there are two mutually orthogonal roots $b_1$, $b_2$ such that
			\[
				J|_{A_4} = \sigma_{b_1} \sigma_{b_2},
			\]
Since $b_1$ and $b_2$ are $(-1)$-eigenvectors for $J$, they are two of $s_1$, $s_2$, $s_3$. However, since $v$ is orthogonal to $b_1$ and $b_2$, we conclude that $v$ is a multiple of a root in $E_6^{J,-}$. This contradicts our assumption on $v$.
			\smallskip
			\item $A_5$: We claim that $\langle A_5 \rangle_{E_6}^{\perp} \isom A_1$; this contradicts our assumption on $v$. First, the discriminant group $D(A_5)$ has no nontrivial isotropic subgroups, and this implies that any embedding $A_5 \hookrightarrow E_6$ is primitive. Let $x$ be a primitive generator for $\langle A_5 \rangle_{E_6}^{\perp}$. Then
			\[
				\langle x \rangle \oplus A_5 \subset E_6 \subset E_6^\vee \subset \langle x \rangle^\vee \oplus A_5^\vee.
			\]
Primitivity of $A_5$ implies that each projection of the map
			\[
				H_{E_6}:=E_6/(\langle x \rangle \oplus A_5) \hookrightarrow D(\langle x \rangle) \oplus D(A_5)
			\]
is injective, so the order of $H_{E_6}$ has to divide the order of $D(A_5)$, which is $6$. On the other hand
			\[
				\det(E_6) \# H_{E_6}^2 = \det(\langle x \rangle) \det(A_5),
			\]
which implies that $x^2  = -( \# H_{E_6})^2/2$ (recall our lattices are negative definite). Since $x^2$ is an even integer, we have $\# H_{E_6} = 2$ or $6$. However, if $\# H_{E_6} = 6$, then $x^2 = {-18}$ and $D(\langle x \rangle) \isom \mathbb Z/18$. The group $H_{E_6}$ is isomorphic to the unique subgroup of $D(\langle x \rangle)$ of order 6. We denote this subgroup and its discriminant form by $H_q$ and $q$, respectively. Since $H_{E_6}$ is an isotropic subgroup in $D(\langle x \rangle) \oplus D(A_5)$, we must have $-q \isom q_{A_5}$. On the other hand, $q_{A_5}$ contains no nontrivial isotropic subgroups, but $q$ does. This contradiction shows that $\# H_{E_6} = 2$ and $x^2 = {-2}$, which implies that $v$ is a multiple of a root, contradicting our hypothesis. 
			\smallskip
			\item $D_5$: An explicit realization of the lattice $D_5$ is
			\[
				D_5 = \{(x_1, x_2,x_3,x_4,x_5) \in \mathbb Z^5 \mid x_1+x_2+x_3+x_4+x_5 \text{ is even} \};
			\]
considered as a lattice under the standard dot product (see~\cite[p. 117]{CS}). The orthogonal group $O(D_5)$ is generated by the group $S_5$ of all permutations of coordinates and the sign change of the last coordinate:
			\[
				O(D_5) =\left\langle S_5, \diag(1,1,1,1,-1).\right\rangle
			\]
The restriction $J|_{D_5}$ has the signature $(3,2)$, and there are three conjugacy classes of involutions in $O(D_5)$ with the signature $(3,2)$. Representatives of these classes are given by
			\[
				\diag(-1,-1,1,1,1), \quad 
				\begin{pmatrix}
					0 & 1 \\ 
					1 & 0
				\end{pmatrix}
				\oplus
				\diag(-1,1,1), \quad\textrm{and}\quad 
				\begin{pmatrix}
					0 & 1 \\ 
					1 & 0
				\end{pmatrix}
				\oplus
				\begin{pmatrix}
					0 & 1 \\ 
					1 & 0
				\end{pmatrix}
				\oplus 
				\begin{pmatrix}
					1
				\end{pmatrix}.
			\]
Each involution has a root as an $(-1)$-eigenvector, so one of the roots in $D_5$ is a $(-1)$-eigenvector for $J$. This root must be one of $s_1$, $s_2$ or $s_3$.  But then $v$ is orthogonal to this root, so $v$ cannot be an integral combination of $3$ roots.
			\smallskip
			\item $D_4$: the lattice $D_4$ is isomorphic to
			\[
				D_4 = \{(x_1, x_2,x_3,x_4) \in \mathbb Z^5 \mid x_1+x_2+x_3+x_4 \text{ is even} \},
			\]
considered as a lattice under the standard dot product. The orthogonal group $O(D_4)$ is generated by
			\[
				S_4, \quad\diag(1,1,1,-1),\quad\textrm{and}\quad
				\frac{1}{2}
				\begin{pmatrix}
					1 & 1 & 1 & 1\\
					1 &-1 & 1 &-1\\
					1 & 1 &-1 &-1\\
					1 &-1 &-1 &1
				\end{pmatrix}.
			\]
The restriction $J|_{D_4}$ has the signature $(2,2)$ or $(3,1)$, and there are four conjugacy classes of involutions with these signatures, with representatives given by
			\[
				\begin{pmatrix}
					 0 & -1 \\ 
					-1 & 0
				\end{pmatrix}
				\oplus
				\begin{pmatrix}
					0 & 1 \\ 
					1 & 0
				\end{pmatrix}, \ \ 
				\begin{pmatrix}
					0 & 1 \\
					1 & 0
				\end{pmatrix}
				\oplus\diag(1,-1), \ \ \diag(1,1,1,-1), \ \ \textrm{and}\ \ 
				\begin{pmatrix}
					0 & 1 \\
					1 & 0
				\end{pmatrix}
				\oplus\diag(1,1)
			\]
The first, second, and fourth cases are impossible since these involutions have a root as a $(-1)$-eigenvector, and we can argue as at the end of case 4). The third case is also impossible because if the restriction of $J$ is conjugate to the third involution, then there are two mutually orthogonal roots which are fixed by $J$. This means that $A_1^{\oplus 5}$ is contained in $E_6$, which is impossible, as we already argued in case 2) above.
			\smallskip
			\item $A_1\oplus A_3$: In this case, the involution $J$ preserves $A_1$ and $A_3$. If $J$ acts on $A_1$ as $-1$, then there is a root which is a $(-1)$-eigenvector of $J$. This contradicts our hypothesis, arguing as at the end of case 4). Thus $J$ fixes $A_1$, and the restriction $J|_{A_3}$ has signature $(1,2)$ or $(2,1)$. There are four conjugacy classes of involutions with these signatures in $O(A_3)$. One is obtained as $\sigma_b$, where $b$ is a root. This cannot be the case because then $b$ would be a $(-1)$-eigenvector of $J$, and we get a contradiction as before. Another is $-\sigma_b$, but this is impossible because if $r$ is a root orthogonal to $b$, then $r$ is a $(-1)$-eigenvector of $J$. The third involution is $\sigma_{b_1}\circ \sigma_{b_2}$ where $b_1$, $b_2$ are mutually orthogonal roots. However, this is again impossible. The final element we need to consider is $-\sigma_{b_1}\circ \sigma_{b_2}$. In this case, $b_1$, $b_2$ are $J$-fixed vectors, so $a_1, a_2, a_3, b_1, b_2$ are mutually orthogonal. This is impossible because $A_1^{\oplus 5}$ is not contained in $E_6$, as argued in case 2) above. $\hfill\qedhere$
		\end{enumerate}
	\end{proof}

	The existence of a vector $v$ as  in Lemmas~\ref{lem:pexists} and~\ref{lemma:lessthan14} is subject to some arithmetic constraints.  The lattice $E_6^{J,-} = \langle s_1, s_2, s_3\rangle \isom A_1(-1)^{\oplus 3}$ has $2(x_1^2 + x_2^2 + x_3^2)$ as its associated quadratic form.  Requiring $v$ to have length $2(\alpha\beta - n)$ and not to be a sum of two roots is thus equivalent to solving the diophantine equation
	\[
		x_1^2 + x_2^2 + x_3^2 = \alpha\beta - n,\qquad x_1x_2x_3 \neq 0.
	\]
By Legendre's theorem on sums of three squares, an integer not of the form $4^a(8b+7)$ is a sum of three squares.  The condition $x_1x_2x_3 \neq 0$ is more subtle, but can be dealt with nevertheless.

	\begin{theorem}
		\label{thm:Dis0mod6}
		Let $n > 18$ be an integer such that $n \notin \{20,22,23,25,30,32\}$, and set $d = 6n$.  Then there exists a primitive embedding of lattices 
		\[
			K_d^\perp(-1) \isom A_2(-1)\oplus\langle 2n\rangle \oplus U \oplus E_8(-1)^{\oplus 2} \hookrightarrow L_{2,26}
		\]
		such that
		\smallskip
		\begin{enumerate}
			\item $\ell \in U\oplus E_6^{J,-}$,
			\smallskip
			\item $0 < \#R_\ell < 14$, and
			\smallskip
			\item $J'(r) \neq -r$ for all $r \in R_\ell$.
		\end{enumerate}
		\smallskip
For $n \in \{17,23,25,32\}$ there exists a primitive embedding satisfying $(1)$ and $(3)$, such that $\#R_\ell = 14$.
	\end{theorem}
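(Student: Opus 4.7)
The plan is to apply the same engineering strategy as in \S\ref{S:latticeEng}: embed the summand $U\oplus E_8(-1)^{\oplus 2}$ of $K_d^{\perp}(-1)$ identically into the corresponding summand of $L_{2,26}$, map $A_2(-1)=\langle a_1,a_2\rangle$ into a copy of $A_2(-1)\subseteq E_8(-1)$, and send the generator of $\langle 2n\rangle$ to a vector $\ell=\alpha e+\beta f+v$ in $U\oplus E_6(-1)$, where $U=\langle e,f\rangle$ and $v\in E_6(-1)$ has length $v^{2}=2(n-\alpha\beta)$. To arrange $\ell\in U\oplus E_6^{J,-}$ (condition (1)), we will force $v\in E_6^{J,-}=\langle s_1,s_2,s_3\rangle\isom A_1(-1)^{\oplus 3}$, so that $v=x_1s_1+x_2s_2+x_3s_3$ for some $x_i\in\Z$. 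The length condition then reads
\[
x_1^{2}+x_2^{2}+x_3^{2} \;=\; \alpha\beta - n.
\]

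Next I would choose $\alpha$ and $\beta$ so that the inequalities~\eqref{eq:notypeIIorIII} hold and $3\nmid(\alpha,\beta)$. Once this is achieved, Lemma~\ref{lem:onlyTypeI} rules out Type II and Type III vectors in $R_\ell$, so every element of $R_\ell$ is a root of $E_6(-1)$ orthogonal to $v$. If the $x_i$ are all nonzero, then $v\in E_6^{J,-}$ is not an integral combination of two roots, so Lemma~\ref{lem:pexists} secures condition (3), while Lemma~\ref{lemma:lessthan14} yields the upper bound $\#R_\ell<14$ of condition (2). The lower bound $\#R_\ell>0$ is automatic: the rank-$3$ sublattice $E_6^{J,+}=\langle s_1,s_2,s_3\rangle^{\perp}_{E_6(-1)}$ always contains roots, and every such root lies in $R_\ell$ since it is orthogonal to $v$. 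Primitivity of the embedding follows from Lemma~\ref{lem:primitivity}, provided $v$ is primitive in $E_6(-1)$; it suffices to ensure $\gcd(x_1,x_2,x_3)=1$ and then check (e.g.\ using the explicit description of $E_6(-1)\supseteq E_6^{J,-}$) that no additional vector of $E_6(-1)$ divides $v$, or to pick $\alpha,\beta$ so that $\alpha\beta-n$ is squarefree, which forces primitivity.

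The remaining obstacle, which is the main one, is purely arithmetic: for \emph{every} $n>18$ with $n\notin\{20,22,23,25,30,32\}$, produce coprime-enough integers $\alpha,\beta$ in the narrow window of~\eqref{eq:notypeIIorIII} such that $\alpha\beta - n$ can be written as a sum of three \emph{nonzero} squares. By Legendre's three-squares theorem, an integer $m$ is a sum of three squares iff $m\neq 4^{a}(8b+7)$; to demand nonzero summands is stronger, and one must exclude the finitely many integers representable only as $m=k^2$ or $m=k^2+\ell^2$. For $n$ sufficiently large, the flexibility in the window~\eqref{eq:notypeIIorIII} grows like $\sqrt{n}$, so we will be able to pick $\alpha,\beta$ hitting a residue class that guarantees $\alpha\beta-n$ avoids the excluded form and is moreover representable by three nonzero squares; here the effective strengthening of Legendre's theorem (conditional on GRH, as flagged in~\S\ref{ss:contributions}) makes the asymptotic argument clean. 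The GRH dependence is harmless because it is only needed to control an explicit finite range.

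Finally, for the remaining finite range of $n$ (up to the effective bound produced above), I would carry out a direct computer search, analogous to the one in the proof of Theorem~\ref{thm:Dis2mod6}: for each such $n$, iterate over pairs $(\alpha,\beta)$ satisfying~\eqref{eq:notypeIIorIII} and $3\nmid(\alpha,\beta)$, enumerate all decompositions $\alpha\beta-n=x_1^{2}+x_2^{2}+x_3^{2}$ with $x_1x_2x_3\neq 0$, assemble $v$ and $\ell$, verify primitivity via Lemma~\ref{lem:primitivity}, and count $R_\ell$ directly. The six exceptional values $n\in\{20,22,23,25,30,32\}$ are those for which this search produces no embedding with $\#R_\ell<14$; for the values $n\in\{17,23,25,32\}$ the search does produce embeddings satisfying (1) and (3) with $\#R_\ell=14$, which suffices for the final clause. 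The expected chief difficulty is coordinating the three simultaneous conditions---the size constraint~\eqref{eq:notypeIIorIII}, the sum-of-three-nonzero-squares condition on $\alpha\beta-n$, and primitivity---in the small-$n$ regime where the window is too narrow for asymptotic arguments to bite.
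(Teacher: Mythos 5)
Your overall strategy coincides with the paper's: embed $U\oplus E_8(-1)^{\oplus 2}$ identically, send $A_2(-1)$ into $E_8(-1)$, take $\ell=\alpha e+\beta f+v$ with $v=x_1s_1+x_2s_2+x_3s_3\in E_6^{J,-}$, and reduce the whole problem to representing $\alpha\beta-n$ as a sum of three nonzero squares with $\alpha,\beta$ in the window~\eqref{eq:notypeIIorIII}. Lemmas~\ref{lem:onlyTypeI}, \ref{lem:pexists}, \ref{lem:primitivity} and~\ref{lemma:lessthan14} are deployed exactly as in the paper, and your observation that $\#R_\ell>0$ comes for free because $E_6^{J,+}=\langle s_1,s_2,s_3\rangle^\perp_{E_6(-1)}$ contains roots orthogonal to $v$ is correct (the paper confirms, in the proof of Lemma~\ref{lemma:ramification}, that this rank-three lattice contains a pair of roots).

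The genuine gap is in your handling of the three-nonzero-squares condition. The unconditional theorem of Grosswald states that a positive integer is a sum of three nonzero squares unless it has the form $4^a(8b+7)$ or $4^ak$ with $k$ in an explicit finite list \emph{or} $k=N$ for at most one \emph{unknown} $N>5\cdot 10^{10}$; GRH is precisely what eliminates $N$. So unconditionally the exceptional set is \emph{not} ``an explicit finite range,'' and your argument as written proves the theorem only under GRH, whereas the statement is unconditional. The paper removes the dependence with an observation you are missing: if $\alpha\beta-n=N$ then $n>4N>2\cdot 10^{11}$, and for such $n$ the window~\eqref{eq:notypeIIorIII} has width $>16$, so $\alpha$ may be shifted by $\pm 8$ (which preserves the class of $\alpha^2+\alpha-n$ modulo $8$, hence the avoidance of $4^a(8b+7)$) to dodge the single value $N$. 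A second, smaller defect: you propose to run the finite computer search only over pairs $(\alpha,\beta)$ satisfying~\eqref{eq:notypeIIorIII}, but for much of the finite range (the paper needs $n<5636$) that interval does not contain two admissible integers; the paper's search imposes only $3\nmid(\alpha,\beta)$ and $n<\alpha\beta<2n$, at the cost of having to count Type II and Type III roots, and to verify condition~$(3)$, directly rather than via Lemmas~\ref{lem:onlyTypeI} and~\ref{lem:pexists}.
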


	\begin{proof}
		We construct $\ell = \alpha e + \beta f + v$, with $v \in E_6^{J,-}$ satisfying the hypotheses of Lemmas~\ref{lem:pexists} and~\ref{lemma:lessthan14}. To this end, we specify integers $\alpha$ and $\beta$ such that
		\smallskip
		\begin{enumerate}
			\item $(\alpha, \beta) = 1$ (to certify primitivity of our embedding),
			\smallskip
			\item the inequalities~\eqref{eq:notypeIIorIII} hold: 
			\[
				\sqrt{(1 + \epsilon)n} < \alpha < \sqrt{5n/4},\qquad
				\sqrt{(1 + \epsilon)n} < \beta < \sqrt{5n/4}
			\]
			\item $\alpha\beta -n$ is a sum of three nonzero squares (see the discussion immediately preceding the theorem).
		\end{enumerate}
		\smallskip
To guarantee the first condition, we take $\beta = \alpha+1$. An integer $m$ is a sum of three nonzero squares if $m$ is not of the form $4^a(8b+7)$ or $4^ak$ with 
	\[
		k \in \{1, 2, 5, 10, 13, 25, 37, 58, 85, 130\} \cup \{N\}
	\]
for some $N > 5\cdot 10^{10}$; see~\cite[p.\ 79, Theorem~7]{G85}. Moreover, it is not necessary to include $N$ in the above list if one assumes the Generalized Riemann Hypothesis. We want the integer $\alpha\beta - n = \alpha^2 + \alpha -n$ to avoid these numbers. Suppose that
		\begin{equation}
			\label{eq:widthinterval}
			\sqrt{(5/4)n} - \sqrt{(1+\epsilon)n} > 8.
		\end{equation}
Then, since $\alpha^2 + \alpha \equiv 0, 2, 4$, or $6 \bmod 8$, we can choose $\alpha$ in such a way that $\alpha^2 + \alpha -n \equiv 1$ or $2 \bmod 8$, guaranteeing that $\alpha^2 +\alpha -n$ is not of the form $4^a(8b+7)$. Since $\alpha^2+\alpha-n \not\equiv 0 \bmod 4$, to avoid numbers of the form $4^ak$, we further require that
		\begin{equation}
			\label{eq:epsn}
			\epsilon n > 130
		\end{equation}
so that $\alpha\beta -n > 130$. For the last exceptional number $N$, if we have $\alpha\beta -n = N$, then
		\[
			n > 4N > 20 \cdot 10^{10}.
		\]
Thus, if $\epsilon$ is sufficiently small, we will have
		\begin{equation}
			\label{eq:newwidthinterval}
			\sqrt{(5/4)n} - \sqrt{(1+\epsilon)n} > 16.
		\end{equation}
This way we can adjust $\alpha$ by $\pm8$ to avoid $N$. Solving the simple optimization problem~\eqref{eq:widthinterval} and~\eqref{eq:epsn}, we obtain $\epsilon = 0.02307$ and $n \geq 5636$. For this value of $\epsilon$, if $n > 20\cdot 10^{10}$, then~\eqref{eq:newwidthinterval} also holds, so we may adjust $\alpha$ to avoid $N$.

		For $n < 5636$ we find an embedding with the desired properties using a computer search. We describe the procedure briefly, as it is similar to the search described in the proof of Theorem~\ref{thm:Dis2mod6}.
		\begin{enumerate}
			\item[(0)] Fix, once and for all, an embedding of $E_6(-1) \subseteq \Z^8$ (using, e.g.,~\cite{CS}), as well as a sublattice $\langle s_1,s_2,s_3\rangle = E_6^{J,-} \subseteq E_8(-1)$. 
			\smallskip
			\item[(i)] Find positive integers $\alpha$ and $\beta$ such that $3\nmid (\alpha,\beta)$ and $n < \alpha\beta < 2n$. 
			\smallskip
			\item[(ii)] Compute the set $V := \{v \in E_6(-1) : v\textrm{ primitive and } v^2 = 2(n - \alpha\beta)\}$.
			\smallskip
			\item[(iii)] Fix $v \in V$. Initialize a counter $m = 0$ for $N\left(K_d^\perp(-1)\right) = \#R_{-2}\left(K_d^\perp(-1)\right)/2$.
			\smallskip
			\item[(iv)] Carry out step (iv) of the procedure described in the proof of Theorem~\ref{thm:Dis2mod6}. 			
			\smallskip
			\item[(v)] If $\alpha = \beta$ then add $1$ to $m$; see Lemma~\ref{lem:types}.
			\smallskip
			\item[(vi)] If $0 < m \leq 7$, then we have succeeded in constructing the desired embedding $K_d^\perp(-1) \hookrightarrow L_{2,26}$. Otherwise, go back to step (iii).		
		\end{enumerate}
If $m = 0$ or $m > 7$ for every $v \in V$, then we may go back to Step (i) and find a new pair of integers $\alpha$ and $\beta$ to work with. We obtain a constructive proof of the existence of an embedding $K_d^\perp(-1) \hookrightarrow L_{2,26}$ with $0 < m < 7$ for $20 < n < 5636$ and $n \notin  \{20,22,23,25,30,32\}$. Moreover, for $n \in \{17,23,25,32\}$, we found embeddings with $m = 7$, as desired. The {\tt Magma} \cite{Magma} script {\tt Check6n} that verifies these claims is included in the {\tt arXiv} distribution of this article. The script loads lists of explicit embeddings encoded in the form $(n,\alpha,\beta,v)$ for the relevant values of $n < 5636$.
	\end{proof}

%%%%%%%%%%%%%%%%%%%%%%%%%%%%%%%%%%%%%%%%%%%%%%%%%%%%

\subsection{Ramification divisors}
	\label{ss:ramification}

	To complete the proof of Theorem~\ref{thm:MainThm}(2), it remains to check that the quasi-pullback of the Borcherds form vanishes along ramification divisors of the modular projection
	\[
		\pi \colon \mathcal D_{K_d^\perp(-1)} \rightarrow \overline{\Gamma}_d^+ \backslash \mathcal D_{K_d^\perp(-1)},
	\]
when $\#R_\ell < 14$. This is a delicate calculation, since the group $\overline{\Gamma}_d^+$ is larger than the stable orthogonal group $\widetilde{O}^+(K_d^\perp(-1))$.  To avoid further clutter, in this subsection we let 
	\[
		L := K_d^\perp(-1)\quad\textrm{and}\quad\Gamma := \overline{\Gamma}_d^+.
	\]

	For $r\in L$, we let $L_r$ denote the lattice $\langle r\rangle^\perp_L$. If $r$ is primitive, then set $\mathrm{div}(r) = (r, L)$; if in addition $(r,r) <0$, then define the \defi{rational quadratic divisor}
	\[
		\mathcal D_r := \{ [Z]\in \mathcal D_L \mid (Z,r) = 0 \} \isom \mathcal D_{\langle r\rangle_L^\perp}.
	\]
The vector $r$ is said to be \defi{reflective} if the reflection with respect to $r$,
	\[
		\sigma_r \colon v \mapsto v -\frac{2(v,r)}{(r,r)}r
	\]
is contained in $O(L)$; in this case we say $\calD_r$ is a \defi{reflective divisor}. The ramification divisor of the modular projection $\pi : \mathcal D_L \rightarrow \Gamma\backslash \mathcal D_L$ is the union of the reflective divisors with respect to $\Gamma$:
	\[
		{\rm Bdiv}(\pi) = \bigcup_{\substack{r \in L,\ \textrm{primitive} \\ (r,r) < 0,\ \sigma_r \in \Gamma \cup -\Gamma}}\mathcal D_r;
	\]
see~\cite[Corollary 2.13]{GHSInventiones} and~\cite[Equation (32)]{GHSHandbook}.

	\begin{lemma}
		\label{lem:dets}
		Let $r \in L$ be a primitive vector, and suppose that $-\sigma_r \in \widetilde{O}(L) $. Then the determinant of the lattice $\langle L_r\rangle_{L_{2,26}}^\perp$ divides $4$.
	\end{lemma}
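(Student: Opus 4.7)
The plan is to convert the statement into a bound on $|D(L_r)|$, then use the hypothesis to force $D(L_r)$ to be $2$-elementary, and finally combine this with the explicit structure of $D(L)$ to conclude $|D(L_r)|\leq 4$.

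First, since $L$ is primitive in the unimodular lattice $L_{2,26}$, so is $L_r = L\cap r^\perp$; hence $|\det\langle L_r\rangle_{L_{2,26}}^\perp|=|\det L_r|=|D(L_r)|$. The chain $L_r\oplus\langle r\rangle\subseteq L$ has finite index $m:=|(r,r)|/\mathrm{div}(r)$, giving $|D(L_r)|=md/\mathrm{div}(r)$. Since $-\sigma_r\in\widetilde O(L)\subseteq O(L)$ and $-\mathrm{id}\in O(L)$, the reflection $\sigma_r$ itself lies in $O(L)$, which forces $\mathrm{div}(r)\mid(r,r)\mid 2\,\mathrm{div}(r)$ and hence $m\in\{1,2\}$.

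Next I would show that the hypothesis $-\sigma_r\in\widetilde O(L)$ is equivalent to $D(L_r)$ being $2$-elementary. In the Nikulin formalism, $H:=L/(L_r\oplus\langle r\rangle)$ embeds as a cyclic subgroup of order $m$ in $D(L_r)\oplus D(\langle r\rangle)$, with $H^\perp/H\cong D(L)$, and $-\sigma_r$ acts as $(-1,+1)$ on $D(L_r)\oplus D(\langle r\rangle)$. Triviality on $H^\perp/H$ reduces to $(-2y_1,0)\in H$ for every $(y_1,y_2)\in H^\perp$. The generator of $H$ projects faithfully onto $D(\langle r\rangle)$, so $H\cap(D(L_r)\oplus 0)=0$, forcing $2y_1=0$; a counting argument using $|H^\perp|=md$ shows that $\pi_1\colon H^\perp\to D(L_r)$ is surjective with fibers of size $\mathrm{div}(r)$, so all of $D(L_r)$ is $2$-torsion.

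The main obstacle is to turn $2$-elementarity into a useful lower bound on $\mathrm{div}(r)$. Because $L$ is even, $|(r,r)|$ is even; combined with $\exp D(L_r)\leq 2$ and $D(\langle r\rangle)\cong\Z/|(r,r)|\Z$, the exponent of $D(L_r)\oplus D(\langle r\rangle)$ equals $|(r,r)|=m\,\mathrm{div}(r)$. The inclusion $H^\perp\subseteq D(L_r)\oplus D(\langle r\rangle)$ then gives $\exp D(L)\leq|(r,r)|\leq 2\,\mathrm{div}(r)$. Combined with the standard fact that $\mathrm{div}(r)$ divides $\exp D(L)$ (which follows from $r/\mathrm{div}(r)\in L^\vee$ having order $\mathrm{div}(r)$ in $D(L)$), this forces $\mathrm{div}(r)\in\{\exp D(L),\,\exp D(L)/2\}$.

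Finally, I would invoke Proposition~\ref{prop:Kdperp} to compute $D(L)$ explicitly. For $d=6n+2$, $D(L)$ is cyclic of order $d$, so $\exp D(L)=d$ and $|D(L_r)|=md/\mathrm{div}(r)\in\{m,2m\}\subseteq\{1,2,4\}$. For $d=6n$ with $\gcd(n,3)=1$, $D(L)\cong\Z/3\oplus\Z/2n$ has exponent $d$ and the same bound applies. In the remaining subcase $d=6n$, $3\mid n$, one has $\exp D(L)=2n<d$ and $3\mid d/\exp D(L)$; since $\mathrm{div}(r)\mid\exp D(L)$, the integer $md/\mathrm{div}(r)$ retains a factor of $3$, contradicting $2$-elementarity and showing vacuously that no such $r$ exists.
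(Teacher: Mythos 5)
Your proof is correct, and it reaches the same numerical conclusion as the paper ($|\det L_r|\in\{1,2,4\}$) by a more self-contained route. The paper's proof is shorter because it outsources the key step to \cite[Proposition~3.2]{GHSInventiones}: the existence of a primitive $r$ with $-\sigma_r\in\widetilde O(L)$ forces $D(L)\cong(\Z/2\Z)^m\oplus(\Z/D\Z)$ with $D=\exp D(L)$, and it gives the explicit short list of possible pairs $(r^2,\mathrm{div}(r))$; comparing Sylow subgroups with Hassett's computation $D(L)\cong\Z/(d/3)\Z\oplus\Z/3\Z$ then shows $D(L)$ is cyclic of order $d$, and the formula $|\det L_r|=|r^2\det L|/\mathrm{div}(r)^2$ finishes. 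You instead prove directly, via Nikulin's gluing formalism, that $D(L_r)$ is $2$-elementary, and you extract the constraint $\mathrm{div}(r)\in\{\exp D(L),\exp D(L)/2\}$ from elementary exponent/divisibility considerations rather than from the full classification. What this buys you: you never need the $(\Z/2\Z)^m\oplus(\Z/D\Z)$ structure theorem, and the subcase $9\mid d$ is disposed of cleanly by showing $3\mid|D(L_r)|$ contradicts $2$-elementarity (in the paper this is absorbed into the Sylow comparison). Two small remarks. First, your ``counting argument'' for the surjectivity of $\pi_1\colon H^\perp\to D(L_r)$ is not quite a proof as stated (you would need to know the fiber size independently); the clean justification is that $H^\perp=L^\vee/(L_r\oplus\langle r\rangle)$ and the restriction map $L^\vee\to L_r^\vee$ is surjective because $L/L_r$ is free, $L_r$ being saturated in $L$. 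Second, the case $d\equiv 2\bmod 6$ that you treat is not needed: the lemma is only invoked in \S\ref{s:0mod6}, where $d\equiv 0\bmod 6$.
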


	\begin{proof}
		The discriminant group $D(L)$ is isomorphic to $\Z/(d/3)\Z\oplus \Z/3\Z$; see~\cite[Proposition~3.2.5]{HassettComp}. On the other hand, by~\cite[Proposition~3.2]{GHSInventiones},  the existence of an $r \in L$ such that $-\sigma_r \in \widetilde{O}(L)$ implies that $D(L) \isom (\Z/2\Z)^m \oplus (\Z/D\Z)$ for some integers $m$ and $D$; in fact, $D$ is the exponent of $D(L)$. Comparing $2$-Sylow and $3$-Sylow subgroups, these facts imply that $D(L)$ is cyclic, and hence that $D = d$ and $m = 0$. Using~\cite[Proposition~3.2]{GHSInventiones}, we conclude that one of the following must hold:
		\begin{align*}
			r^2 &= \pm 2d \text{ and } \mathrm{div}(r) = d; \text{ or }\\
			r^2 &= \pm d \text{ and } \mathrm{div}(r) = d \text{ or } d/2.
		\end{align*}
Finally, since
		\[
			|\det L_r| = \frac{| r^2\cdot \det L |}{{\rm div}(r)^2}
		\]
(see~\cite[Lemma~7.2]{GHSHandbook}), we have $|\det L_r| = 1$, $2$, or $4$.
	\end{proof} 

	\begin{lemma}
		\label{lemma:caseofinvolution}
		Let $r\in L$ be a primitive vector. If $-\sigma_r \in \Gamma \setminus \widetilde{O}(L)^+$ then $(r,r) = \pm 6$ and $\mathrm{div}(r) = 3$ or $6$.
	\end{lemma}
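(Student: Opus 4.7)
The plan is to determine $(r,r)$ and $\mathrm{div}(r)$ from the induced action of $\sigma_r$ on the discriminant group $D(L) \cong \Z/3\Z \oplus \Z/(2n)\Z$, adapting the strategy of \cite[Proposition~3.2]{GHSInventiones} to the non-stable modular group $\Gamma$. Because $\Gamma = \langle \widetilde{O}^+(L), \sigma\rangle$, its image in $O(D(L))$ is $\{\id, \bar\sigma\}$; the hypothesis $-\sigma_r \in \Gamma \setminus \widetilde{O}(L)^+$ then forces $\sigma_r$ to induce $-\bar\sigma$ on $D(L)$. Since $\sigma$ acts trivially on $A_2(-1)\oplus E_8(-1)^{\oplus 2}$ and by negation on $\langle 2n\rangle\oplus U$, the action $-\bar\sigma$ on $D(L) = D(A_2(-1)) \oplus D(\langle 2n\rangle)$ is negation on the $\Z/3\Z$-factor (with generator $\bar\alpha$) and identity on the $\Z/(2n)\Z$-factor (with generator $\bar\beta$).

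Set $D := \mathrm{div}(r)$ and $r^\ast := r/D \in L^\vee$. The identity $\sigma_r(r^\ast) = -r^\ast$ places the class $\bar r^\ast \in D(L)$ in the $(-1)$-eigenspace of $-\bar\sigma$, namely $\Z/3\Z \oplus (\Z/(2n)\Z)[2] \cong \Z/6\Z$. Primitivity of $r$ makes $\bar r^\ast$ have order exactly $D$, so $D \in \{1,2,3,6\}$. We dismiss $D=1$ because then $\sigma_r$ acts trivially on $D(L)$, contradicting $-\bar\sigma \neq \id$; and $D=2$ because then $\bar r^\ast$ has no component in the $\Z/3\Z$-factor, and the reflection formula together with $(\alpha_1^\vee, r)$ being even shows $\sigma_r$ fixes $\bar\alpha$, contradicting the required negation.

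The cases $D = 3$ and $D = 6$ yield $(r,r) = \pm 6$ by matching quadratic forms, using $\sigma_r \in O(L) \iff (r,r)/2 \mid D$ and the discriminant-form identity $(r^\ast, r^\ast) \equiv q(\bar r^\ast) \pmod{2\Z}$. For $D = 3$ the only possibility is $\bar r^\ast = \pm\bar\alpha$, with $q(\bar r^\ast) = -2/3$; combined with the list $(r,r) \in \{\pm 2, \pm 6\}$ this forces $(r,r) = -6$. For $D = 6$ one must have $\bar r^\ast = \pm\bar\alpha + n\bar\beta$. Writing $r = r_A + c\ell + r_0$ along the three summands of $L$, the constraint $\bar\sigma_r(\bar\beta) = \bar\beta$ reduces, via $\sigma_r(\ell/(2n)) - \ell/(2n) = -\tfrac{2c}{(r,r)}r$, to the divisibility $(r,r) \mid 2c$. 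The requirement that $\bar r^\ast$ have nontrivial $n\bar\beta$-component translates to $2nc/D \equiv n \pmod{2n}$, i.e., $c$ is an odd multiple of $3$; then $|(r,r)|$ divides $6\cdot(\text{odd})$, giving $|(r,r)| \in \{2,6\}$, and the length-matching congruence $(r,r)/36 \equiv -\tfrac23 + \tfrac{n}{2} \pmod{2\Z}$ rules out $|(r,r)|=2$, leaving $(r,r) = \pm 6$.

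The main technical obstacle is the $D = 6$ case, where three arithmetic constraints (the divisibility $(r,r)/2 \mid 6$ from $\sigma_r \in O(L)$, the primitivity-induced order of $\bar r^\ast$, and the divisibility $(r,r) \mid 2c$ from $\bar\sigma_r(\bar\beta) = \bar\beta$) must be combined carefully to eliminate the spurious length candidates $(r,r) \in \{\pm 4, \pm 12\}$ that the $O(L)$-condition alone allows.
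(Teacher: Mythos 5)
Your proposal is correct and follows essentially the same route as the paper: both arguments pin down the induced action of $\sigma_r$ on $D(L) \cong D(A_2(-1)) \oplus D(\langle 2n\rangle)$ as $(-\Id,\Id)$, deduce $\mathrm{div}(r) \mid 6$ from the resulting constraint on the class of $r/\mathrm{div}(r)$, combine this with $(r,r) \mid 2\,\mathrm{div}(r)$, and then eliminate the spurious possibilities by playing the reflection formula on discriminant-group generators against the primitivity of $r$. The only differences are organizational --- you run the casework over $\mathrm{div}(r) \in \{1,2,3,6\}$ and invoke the discriminant form $q$ where the paper runs it over $(r,r) \in \{\pm 2,\pm 4,\pm 6,\pm 12\}$ and derives $\tfrac{3}{2}r \in L$ --- and these do not change the substance of the argument.
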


	\begin{proof}
		The discussion at the beginning of~\S\ref{ss:embeddingLwithprops} shows that $-\sigma_r$ can written as $\sigma\cdot \tau$ for some $\tau \in \widetilde{O}(L)^+$, where $\sigma$ which acts trivially on $A_2(-1) \oplus E_8(-1) \oplus E_8(-1)$ and by negation on $\langle 2n \rangle \oplus U$. Since $-\sigma_r \notin  \widetilde{O}(L)^+$, its image under the composition
		\[
			\Gamma \subset O(L) \to O(D(L)) \isom O(D(\langle 2n \rangle) \oplus D(A_2(-1)))
		\]
is $(-\Id,\Id)$, and so $\sigma_r \equiv (\Id,-\Id)$ on $D(\langle 2n \rangle) \oplus D(A_2(-1))$. Let $r^\vee = r/\mathrm{div}(r)$. Then
		\[
			-r^\vee = \sigma_r(r^\vee) \equiv r^\vee \bmod A_2(-1)^{\vee} + L.
		\]
In particular, $6r^\vee \in L$ and therefore ${\rm div}(r) \mid 6$. Since
		\[
			\mathrm{div}(r) \mid (r,r) \mid 2\cdot \mathrm{div}(r).
		\]
we conclude that $\pm(r,r) = 2, 4, 6$ or $12$.

		If $(r,r) = \pm 2$, then one can show that $\sigma_r \in \widetilde{O}(L)$, contradicting our assumption. If $(r,r) = \pm 4$, then we have $\mathrm{div}(r) = 2$.  On the other hand, $\sigma_r$ acts by the identity on $D(\langle 2n \rangle)$, i.e., we have $\sigma_r(\ell^\vee) \equiv \ell^\vee \bmod L$. Hence
		\[
			\ell^\vee - \sigma_r(\ell^\vee) = \pm \frac{(r,\ell^\vee)}{2}r = \pm (r^\vee, \ell^\vee)r \in L.
		\]
This means that $(r^\vee, \ell^\vee) \in \Z$. Letting $x$ denote a generator of $D(A_2(-1))$, we have $3(r^\vee,x) = (r^\vee,3x) \in \Z$. But then $3r^\vee$ pairs integrally with two elements $\ell^\vee$ and $x$ that generate $D(L) \isom D(\langle 2n\rangle)\oplus D(A_2(-1))$. This implies that $\frac{3}{2}r = 3r^\vee \in (L^\vee)^\vee = L$, which contradicts primitivity of $r$. Thus $(r,r) \neq \pm 4$. One obtains a similar contradiction when $(r,r) = \pm 12$.
	\end{proof}

	\begin{lemma}
		\label{lemma:ramification}
		Let $L \hookrightarrow L_{2,26}$ be the embedding constructed in Theorem~\ref{thm:Dis0mod6} in the case where $\# R_l <14$. Then the quasi-pullback of the Borcherds form vanishes along ramification divisors of the modular projection $
\pi \colon \mathcal D_L \rightarrow \Gamma \backslash \mathcal D_L$.
	\end{lemma}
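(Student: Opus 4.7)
The ramification divisor ${\rm Bdiv}(\pi)$ is a union of rational quadratic divisors $\mathcal D_r$ indexed by primitive $r \in L$ with $r^2 < 0$ and $\pm \sigma_r \in \Gamma$. To show $\Phi|_L$ vanishes along each $\mathcal D_r$, it suffices to produce a primitive $(-2)$-vector $\tilde r \in L_{2,26}$ with $(\tilde r, L) \neq 0$ and $\mathcal D_{\tilde r} \cap \mathcal D_L = \mathcal D_r$: then $\tilde r \notin R_{-2}(L)$, so the order-one zero of $\Phi_{12}$ along $\mathcal D_{\tilde r}$ is not cancelled by any of the linear factors $(Z,s)$ removed to form the quasi-pullback, and thus descends to a zero of $\Phi|_L$ along $\mathcal D_r$. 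My plan is to split the primitive reflective vectors into two families depending on whether $\pm\sigma_r \in \widetilde O^+(L)$ or $\pm\sigma_r \in \Gamma \setminus \widetilde O^+(L)$, and exhibit $\tilde r$ in each family.

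In the first case the construction of $\tilde r$ is exactly the one in the proof of \cite[Proposition~8.13]{GHSHandbook}. That argument depends only on the rank, signature, and discriminant form of the lattice, together with the bound on the determinant of $\langle L_r\rangle^\perp_{L_{2,26}}$ supplied by Lemma~\ref{lem:dets}. Since $L = K_d^\perp(-1)$ has the same rank $21$, signature $(2,19)$, and discriminant group as the corresponding primitive K3 lattice, the construction goes through unchanged, as already invoked at the end of the proof of Theorem~\ref{thm:MainThm}(1).

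The second case concerns vectors $r$ with $\pm\sigma_r \in \Gamma \setminus \widetilde O^+(L)$. Lemma~\ref{lemma:caseofinvolution} (and a parallel analysis when $\sigma_r$ rather than $-\sigma_r$ is non-stable) pins down $r^2 = -6$ and $\mathrm{div}(r) \in \{3,6\}$, and, from the relation $\sigma_r(r^\vee) = -r^\vee$, forces the class $[r^\vee] \in D(L) \cong D(A_2(-1)) \oplus D(\langle 2n\rangle)$ to have nontrivial $D(A_2(-1))$-component, plus a nontrivial $D(\langle 2n\rangle)$-component precisely when $\mathrm{div}(r) = 6$. Set $N := L^\perp_{L_{2,26}}$; the embedding of Theorem~\ref{thm:Dis0mod6} realises $N$ as $\langle \ell\rangle^\perp \subset U \oplus E_6(-1)$, where $E_6(-1) = A_2(-1)^\perp_{E_8(-1)}$ sits inside the third $E_8(-1)$ factor of $L_{2,26}$, and $D(N)$ is anti-isometric to $D(L)$. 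I would look for $\tilde r = r^\vee + n$ with $n \in N^\vee$ satisfying the length condition $n^2 = -2 - (r^\vee)^2 \in \{-4/3, -11/6\}$ and the glue condition $[n] = -[r^\vee] \in D(N)$, constructing $n$ as $w + u$ with $w \in E_6(-1)^\vee$ carrying the $D(A_2(-1))$-component and $u$ a fractional lift of a $U$-generator carrying the $D(\langle 2n\rangle)$-component, adjusted to lie in $N^\vee$. Primitivity of $\tilde r$ is immediate from $\tilde r^2 = -2$, and $(\tilde r, r) = r^2/\mathrm{div}(r) \neq 0$ gives $(\tilde r, L) \neq 0$.

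The main obstacle is the discriminant-form bookkeeping in the second case when $\mathrm{div}(r) = 6$: here $[r^\vee]$ has nontrivial components in both summands of $D(L)$, so a single $n \in N^\vee$ must simultaneously realise the prescribed fractional length $-11/6$ and match the glue class in both factors of $D(N)$. Carrying this out requires explicitly unravelling the anti-isometry $D(L) \xrightarrow{\sim} -D(N)$ induced by the embedding of Theorem~\ref{thm:Dis0mod6}, in the spirit of the calculations in \cite[\S4]{GHSDifferential}.
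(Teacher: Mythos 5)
Your overall framework---producing, for each reflective divisor $\mathcal D_r$, a $(-2)$-vector of $L_{2,26}$ orthogonal to $L_r$ but not to $L$---is the standard one, and your treatment of the case $-\sigma_r \in \widetilde O(L)$ is essentially the paper's once Lemma~\ref{lem:dets} is in hand. But your case division is not the one that makes the problem tractable. The paper splits according to whether $\sigma_r \in \Gamma$ or only $-\sigma_r \in \Gamma$, not according to stability: whenever $\sigma_r \in \Gamma$ (stable or not), Lemma~\ref{lem:modularforbiggergroup} already makes $\Phi|_L$ modular for $\Gamma$ with character $\det$, and $\det(\sigma_r)=-1$ forces vanishing on the fixed divisor $\mathcal D_r$ with no root construction at all. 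That argument is unavailable precisely when only $-\sigma_r\in\Gamma$, since $\det(-\sigma_r)=+1$ in rank $21$. Your plan instead routes the non-stable $\sigma_r$'s into the hard case and appeals to a ``parallel analysis'' to Lemma~\ref{lemma:caseofinvolution}; that lemma is stated and proved only for $-\sigma_r$, the constraints for a non-stable $\sigma_r$ are not the same, and the whole detour is unnecessary.

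The more serious gap is in the case $-\sigma_r\in\Gamma\setminus\widetilde O(L)$, where your proof reduces to the existence of $n\in N^\vee$ of length $-4/3$ or $-11/6$ in a prescribed coset of $D(N)$, and you leave exactly this existence question as ``the main obstacle.'' That is the step that cannot be waved through: whether a rank-$7$ negative definite lattice coset represents a given rational number is a genuine representation problem. The paper resolves it differently: the Eichler criterion replaces $r$ by an explicit $\widetilde O(L)$-orbit representative (e.g.\ $\pm(a_1-a_2)$ when $\mathrm{div}(r)=3$), and a direct root count in $E_8(-1)$---using the specific shape of the embedding, with $v$ an integral combination of $s_1,s_2,s_3$---shows $\langle a_1+a_2,s_1,s_2,s_3\rangle^\perp_{E_8(-1)}$ has strictly more roots than $\langle a_1,a_2,s_1,s_2,s_3\rangle^\perp_{E_8(-1)}$; and when $\mathrm{div}(r)=6$ with $n\not\equiv 1\bmod 4$ a congruence shows no such reflective $r$ exists at all, a possibility your plan does not contemplate. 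Finally, your justification for importing \cite[Proposition~8.13]{GHSHandbook} unchanged---that $L$ has the same discriminant group as a K3 lattice---fails for $d\equiv 0\bmod 6$ with $9\mid d$, where $D(L)\cong\Z/(d/3)\Z\oplus\Z/3\Z$ is not cyclic; it is only the conclusion of Lemma~\ref{lem:dets}, which forces $D(L)$ cyclic whenever such an $r$ exists, that saves that subcase.
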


	\begin{proof}
		Let $r \in L$ be a primitive vector such that $(r,r) <0$ and $\sigma_r \in \Gamma \cup -\Gamma$.
We must show that the quasi-pullback $\Phi|_L$ vanishes along the reflective divisor $\mathcal D_r$.

		Suppose first that $\sigma_r \in \Gamma$. By Lemma~\ref{lem:modularforbiggergroup}, the function $\Phi|_L$ is a modular form with respect to $\Gamma$ with character $\det$ and $\det(\sigma_r)=-1$, it follows that $\Phi|_L$ vanishes along $\mathcal D_r$.

		Next, if $-\sigma_r \in \widetilde{O}(L)$, then by Lemma~\ref{lem:dets} the determinant of $\langle L_r \rangle_{L_{2,26}}^\perp$ is $1$, $2$, or $4$. 
Lattices of rank $8$ with these possible determinants contain one of the following root systems
		\[
			E_8, A_1 \oplus E_7, D_8;
		\]
see~\cite{CS88}. It follows that $\langle L \rangle_{L_{2,26}}^\perp$ contains at least 112 roots, and since $R_\ell < 14$, we conclude that $\Phi|_L$ vanishes along $\mathcal D_r$ with order at least 50.

		Suppose now that $-\sigma_r \in \Gamma\setminus \widetilde{O}(L)$. Then $(r,r) = -6$ and ${\rm div}(r) = 3$ or $6$, by Lemma~\ref{lemma:caseofinvolution}. The lattice $L$ contains two orthogonal hyperbolic planes (see~\cite[Corollary A.5]{Ma}), so that by the Eichler criterion~\cite[Lemma~3.5]{GHSCompositio}, the $\widetilde{O}(L)$-orbit of a primitive vector $v\in L$ is determined by two invariants: its length $(v,v)$ and the image of $v^\vee = v/\mathrm{div} (v)$ in the discriminant group $D(L)$. 

		Suppose that $\mathrm{div}(r)=3$. Then, as in the proof of Lemma~\ref{lemma:caseofinvolution}, we have $\sigma_r(\ell^\vee) - \ell^\vee \in L$, and hence $(r^\vee,\ell^\vee) \in \Z$. Viewing $r^\vee$ as an element of $D(\langle 2n\rangle)\oplus D(A_2(-1))$, we have $r^\vee = k\ell^\vee + x$ for some integer $k$ and some $x \in D(A_2(-1))$. The containment $(r^\vee,\ell^\vee) \in \Z$ shows that $k$ must be divisible by $2n$, and hence $r^\vee = x$. Thus a representative of the $\widetilde{O}(L)$-orbit of $r$ is $\pm (a_1-a_2)$ where $\{a_1, a_2\}$ is the standard basis for $A_2(-1)$. The generator $\ell$ for $\langle 2n \rangle$ is mapped into a direct sum of $U$ and $E_8(-1)$ in $L_{2,26}$, and the direct summand of $\ell$ in $E_8(-1)$ is a linear combination of three mutually orthogonal roots $s_1, s_2, s_3$ which are orthogonal to $A_2(-1) = \langle a_1, a_2\rangle$. Using {\tt Magma}~\cite{Magma}, it is easy to verify that $\langle a_1, a_2, s_1, s_2, s_3\rangle_{E_8(-1)}^\perp$ contains only two roots, but on the other hand, $\langle a_1+a_2, s_1, s_2, s_3\rangle_{E_8(-1)}^\perp$ contains more than two roots\footnote{See the script {\tt Lemma6-8check} in the {\tt arXiv} distribution of the article.}. This implies that there is a root in $L_{2,26}$ which is orthogonal to $L_r$ but not $L$. This implies that our quasi-pullback of the Borcherds form vanishes along $\mathcal D_r$. 

		Assume that $\mathrm{div}(r) = 6$. In this case, we have $(r,r) = -6$, and
		\[
			r^\vee = (\pm 1, n) \in D(L) \isom \mathbb Z/3\mathbb Z \oplus\mathbb Z / 2n\mathbb Z.
		\]
First let us assume that $n \equiv 1 \bmod 4$. Then a representative of $r$ has the following form
		\[
			\pm2(a_1-a_2) + 3\ell + (6e +6\beta f) \in A_2(-1) \oplus \langle 2n \rangle \oplus U.
		\]
As we discussed before, there is a root in $L_{2,26}$ which is orthogonal to $L_r$, but not $L$. Thus we can conclude that our quasi-pullback of the Borcherds form vanishes along $\mathcal D_r$. 

		Suppose that $n\not\equiv 1 \bmod 4$. Since $\mathrm{div}(r) = 6$ and $\sigma_r \equiv (-I, I)$ on $D(L)$, $r$ has the form of
		\[
			r = 2v + 3k\ell + (6\alpha e + 6 \beta f) + 6w_1 + 6w_2 \in A_2(-1) \oplus \langle 2n \rangle \oplus U \oplus E_8(-1) \oplus E_8(-1),
		\]
where $k$ is odd. Thus we have
		\[
			-6 = r^2 \equiv 4v^2 +18k^2n\mod 72.
		\]
If $n$ is even, then arrive at a contradiction because the left hand side is not divisible by $4$. If $n \equiv 3 \bmod 4$, then
		\[
			-6 \equiv 4v^2 + 54k^2 \equiv 4v^2 + 54\bmod 72,
		\]
which implies that 
		\[
			v^2 \equiv 3\bmod 18.
		\]
This is a contradiction since $v^2$ is even.
	\end{proof}

	\begin{proof}[Proof of Theorem~\ref{thm:MainThm}(2)]
		Apply Theorems~\ref{thm:LowWeightCusp} and~\ref{thm:quasipullback}, using Theorem~\ref{thm:Dis0mod6} and Lemma~\ref{lemma:ramification}.
	\end{proof}

%%%%%%%%%%%%%%%%%%%%%%%%%%%%%%%%%%%%%%%%%%%%%%%%%%%%%%%%%%%%%%%%%%%%%%%%%%%%%%%
%%%%%%%%%%%%%%%%%%				Bibliography			%%%%%%%%%%%%%%%%%%%%%%%%
%%%%%%%%%%%%%%%%%%%%%%%%%%%%%%%%%%%%%%%%%%%%%%%%%%%%%%%%%%%%%%%%%%%%%%%%%%%%%%%

\begin{bibdiv}
\begin{biblist}

	\bib{AddingtonThomas}{article}{
	   author={Addington, N.},
	   author={Thomas, R.},
	   title={Hodge theory and derived categories of cubic fourfolds},
	   journal={Duke Math. J.},
	   volume={163},
	   date={2014},
	   number={10},
	   pages={1885--1927},
	   issn={0012-7094},
	   %review={\MR{3229044}},
	   %doi={10.1215/00127094-2738639},
	}
	\bib{BB}{article}{
	   author={Baily, W. L., Jr.},
	   author={Borel, A.},
	   title={Compactification of arithmetic quotients of bounded symmetric
	   domains},
	   journal={Ann. of Math. (2)},
	   volume={84},
	   date={1966},
	   pages={442--528},
	   %issn={0003-486X},
	   %review={\MR{0216035 (35 \#6870)}},
	}
	
	\bib{BD}{article}{
	   author={Beauville, A.},
	   author={Donagi, R.},
	   title={La vari\'et\'e des droites d'une hypersurface cubique de dimension
	   $4$},
	   language={French, with English summary},
	   journal={C. R. Acad. Sci. Paris S\'er. I Math.},
	   volume={301},
	   date={1985},
	   number={14},
	   pages={703--706},
	   issn={0249-6291},
	   %review={\MR{818549 (87c:14047)}},
	}

	\bib{Borcherds}{article}{
	   author={Borcherds, R. E.},
	   title={Automorphic forms on ${\rm O}_{s+2,2}({\bf R})$ and infinite
	   products},
	   journal={Invent. Math.},
	   volume={120},
	   date={1995},
	   number={1},
	   pages={161--213},
	   issn={0020-9910},
	   %review={\MR{1323986 (96j:11067)}},
	   %doi={10.1007/BF01241126},
	}

	\bib{Magma}{article}{
	   author={Bosma, W.},
	   author={Cannon, J.},
	   author={Playoust, C.},
	   title={The Magma algebra system. I. The user language},
	   note={Computational algebra and number theory (London, 1993)},
	   journal={J. Symbolic Comput.},
	   volume={24},
	   date={1997},
	   number={3-4},
	   pages={235--265},
	   issn={0747-7171},
	   %review={\MR{1484478}},
	   %doi={10.1006/jsco.1996.0125},
  	}

	\bib{CS88}{article}{
	   author={Conway, J. H.},
	   author={Sloane, N. J. A.},
	   title={Low-dimensional lattices. {I}. {Q}uadratic forms of small determinant},
	   journal={Proc. Roy. Soc. London Ser. A},
	   volume={418},
	   number={1854},
	   date={1988},
	   pages={17-41},
	   isbn={0962-8444},
	   %review={\MR{1662447 (2000b:11077)}},
	   %doi={10.1007/978-1-4757-6568-7},
	}

	\bib{CS}{book}{
	   author={Conway, J. H.},
	   author={Sloane, N. J. A.},
	   title={Sphere packings, lattices and groups},
	   series={Grundlehren der Mathematischen Wissenschaften [Fundamental
	   Principles of Mathematical Sciences]},
	   volume={290},
 	   edition={3},
 	   note={With additional contributions by E. Bannai, R. E. Borcherds, J.
 	   Leech, S. P. Norton, A. M. Odlyzko, R. A. Parker, L. Queen and B. B. Venkov},
	   publisher={Springer-Verlag, New York}, 		  
	   date={1999},
	   pages={lxxiv+703},
	   isbn={0-387-98585-9},
	   %review={\MR{1662447 (2000b:11077)}},
	   %doi={10.1007/978-1-4757-6568-7},
	}

	\bib{Fano}{article}{
	   author={Fano, G.},
	   title={Sulle forme cubiche dello spazio a cinque dimensioni contenenti
	   rigate razionali del $4^\circ$ ordine},
	   language={Italian},
	   journal={Comment. Math. Helv.},
	   volume={15},
	   date={1943},
	   pages={71--80},
	   issn={0010-2571},
	   %review={\MR{0010433 (6,17b)}},
	}
	
	\bib{Gritsenko}{article}{
	   author={Gritsenko, V.},
	   title={Modular forms and moduli spaces of abelian and $K3$ surfaces},
	   language={Russian, with Russian summary},
	   journal={Algebra i Analiz},
	   volume={6},
	   date={1994},
	   number={6},
	   pages={65--102},
	   issn={0234-0852},
	   translation={
	      journal={St. Petersburg Math. J.},
	      volume={6},
	      date={1995},
	      number={6},
	      pages={1179--1208},
	      issn={1061-0022},
	   },
	   %review={\MR{1322120 (96c:11052)}},
	}

	\bib{GHSInventiones}{article}{
	   author={Gritsenko, V. A.},
	   author={Hulek, K.},
	   author={Sankaran, G. K.},
	   title={The Kodaira dimension of the moduli of $K3$ surfaces},
	   journal={Invent. Math.},
	   volume={169},
	   date={2007},
	   number={3},
	   pages={519--567},
	   issn={0020-9910},
	   %review={\MR{2336040 (2008f:14054)}},
	   %doi={10.1007/s00222-007-0054-1},
	}

	\bib{GHSCompositio}{article}{
	   author={Gritsenko, V. A.},
	   author={Hulek, K.},
	   author={Sankaran, G. K.},
	   title={Moduli spaces of irreducible symplectic manifolds},
	   journal={Compos. Math.},
	   volume={146},
	   date={2010},
	   number={2},
	   pages={404--434},
	   issn={0010-437X},
	   %review={\MR{2601632 (2011e:32012)}},
 	   %doi={10.1112/S0010437X0900445X},
 	}

	\bib{GHSDifferential}{article}{
	   author={Gritsenko, V. A.},
	   author={Hulek, K.},
	   author={Sankaran, G. K.},
	   title={Moduli spaces of polarized symplectic {O}'{G}rady varieties and {B}orcherds products},
	   journal={J. Differential Geom.},
	   volume={88},
	   date={2011},
	   number={1},
	   pages={61--85},
	   issn={0022-040X},
	   %review={\MR{2601632 (2011e:32012)}},
	   %doi={10.1112/S0010437X0900445X},
	}

	\bib{GHSHandbook}{incollection}{
	   author={Gritsenko, V. A.},
	   author={Hulek, K.},
	   author={Sankaran, G. K.},
	   title={Moduli of {K}3 surfaces and irreducible symplectic manifolds},
	   booktitle={Handbook of moduli. {V}ol. {I}},
	   publisher={International Press of Boston},
	   series={Adv. Lect. Math. (ALM)},
	   date={2013},
	   volume={1},
	   pages={459--526},
	   %review={\MR{2601632 (2011e:32012)}},
	   %doi={10.1112/S0010437X0900445X},
	}

	\bib{G85}{book}{
	   author={Grosswald, E.},
	   title={Representations of integers as sums of squares},
	   publisher={Springer-Verlag, New York},
	   date={1985},
	   pages={xi+251},
	   isbn={0-387-96126-7},
	   %review={\MR{1662447 (2000b:11077)}},
	   %doi={10.1007/978-1-4757-6568-7},
	}

	\bib{HassettThesis}{thesis}{
	   author={Hassett, B.},
	   title={Special cubic fourfolds},
	   note={Ph. D. Thesis (revised), available at \url{http://www.math.brown.edu/~bhassett/papers/cubics/cubiclong.pdf}},
	   school={Harvard University},
	   year={1996}, 
	   %note={http://math.rice.edu/hassett/papers/cubics/cubiclong.pdf}
	}

	\bib{HassettComp}{article}{
	   author={Hassett, B.},
	   title={Special cubic fourfolds},
	   journal={Compos. Math.},
	   volume={120},
	   date={2000},
	   number={1},
	   pages={1--23},
	   issn={0010-437X},
	   %review={\MR{1738215 (2001g:14066)}},
	   %doi={10.1023/A:1001706324425},
	}

	\bib{Huybrechts}{article}{
	   author={Huybrechts, D.},
	   title={The K3 category of a cubic fourfold},
	   date={2015},
	   note={Preprint; arXiv:1505.01775},
	}

	\bib{Iwaniec}{book}{
	   author={Iwaniec, H.},
	   title={Topics in classical automorphic forms},
	   series={Graduate Studies in Mathematics},
	   volume={17},
	   publisher={American Mathematical Society, Providence, RI},
	   date={1997},
	   pages={xii+259},
	   isbn={0-8218-0777-3},
	   %review={\MR{1474964 (98e:11051)}},
	   %doi={10.1090/gsm/017},
	}

	\bib{KondoII}{article}{
	   author={Kond{\=o}, S.},
	   title={On the Kodaira dimension of the moduli space of $K3$ surfaces. II},
	   journal={Compos. Math.},
	   volume={116},
	   date={1999},
	   number={2},
	   pages={111--117},
	   issn={0010-437X},
	   %review={\MR{1686793 (2000j:14055)}},
	   %doi={10.1023/A:1000675831026},
	}
	
	\bib{Lai}{article}{
	   author={Lai, K.-W.},
	   title={New cubic fourfolds with odd degree unirational parametrizations},
	   date={2016},
	   note={Preprint; arXiv:1606.03853},
	}

	\bib{Laza}{article}{
	   author={Laza, R.},
	   title={The moduli space of cubic fourfolds via the period map},
	   journal={Ann. of Math. (2)},
	   volume={172},
	   date={2010},
	   number={1},
	   pages={673--711},
	   issn={0003-486X},
	   %review={\MR{2680429 (2011m:14059)}},
	   %doi={10.4007/annals.2010.172.673},
	}

	\bib{LiZhang}{article}{
	   author={Li, Z.},
	   author={Zhang, L.},
	   title={Modular forms and special cubic fourfolds},
	   journal={Adv. Math.},
	   volume={245},
	   date={2013},
	   pages={315--326},
	   issn={0001-8708},
	   %review={\MR{3084431}},
	   %doi={10.1016/j.aim.2013.06.003},
	}
	
	\bib{Looijenga}{article}{
	   author={Looijenga, E.},
	   title={The period map for cubic fourfolds},
	   journal={Invent. Math.},
	   volume={177},
	   date={2009},
	   number={1},
	   pages={213--233},
	   issn={0020-9910},
	   %review={\MR{2507640 (2010h:32013)}},
	   %doi={10.1007/s00222-009-0178-6},
	}

	\bib{Ma}{article}{
	   author={Ma, S.},
	   title={Finiteness of stable orthogonal modular varieties of non-general type},
	   date={2013},
	   note={Preprint; arXiv:1309.7121},
	}

	\bib{MSTVA}{article}{
	   author={McKinnie, K.},
	   author={Sawon, J.},
	   author={Tanimoto, S.},
	   author={V{\'a}rilly-Alvarado, A.},
	   title={Brauer groups on $K3$ surfaces and arithmetic applications},
	   date={2014},
	   note={Preprint; arXiv:1404.5460}
	}

	\bib{Mukai}{article}{
	   author={Mukai, S.},
	   title={On the moduli space of bundles on $K3$ surfaces. I},
	   conference={
	      title={Vector bundles on algebraic varieties},
	      address={Bombay},
	      date={1984},
	   },
	   book={
	      series={Tata Inst. Fund. Res. Stud. Math.},
	      volume={11},
	      publisher={Tata Inst. Fund. Res., Bombay},
	   },
	   date={1987},
	   pages={341--413},
	   %review={\MR{893604 (88i:14036)}},
	}

	\bib{Nikulin}{article}{
	   author={Nikulin, V. V.},
	   title={Integer symmetric bilinear forms and some of their geometric
	   applications},
	   language={Russian},
	   journal={Izv. Akad. Nauk SSSR Ser. Mat.},
	   volume={43},
	   date={1979},
	   number={1},
	   pages={111--177, 238},
	   issn={0373-2436},
	   %review={\MR{525944 (80j:10031)}},
	}

	\bib{Nuer}{article}{
	   author={Nuer, H.},
	   title={Unirationality of moduli spaces of special cubic fourfolds and K3 surfaces},
	   date={2015},
	   note={Preprint; arXiv:1503.05256},
	}
	
	\bib{Tregub}{article}{
	   author={Tregub, S. L.},
	   title={Three constructions of rationality of a cubic fourfold},
	   language={Russian},
	   journal={Vestnik Moskov. Univ. Ser. I Mat. Mekh.},
	   date={1984},
	   number={3},
	   pages={8--14},
	   issn={0201-7385},
	   %review={\MR{749015 (86a:14041)}},
	}

	\bib{Voisin}{article}{
	   author={Voisin, C.},
	   title={Th\'eor\`eme de Torelli pour les cubiques de ${\bf P}^5$},
	   language={French},
	   journal={Invent. Math.},
	   volume={86},
	   date={1986},
	   number={3},
	   pages={577--601},
	   issn={0020-9910},
	   %review={\MR{860684 (88g:14006)}},
	   %doi={10.1007/BF01389270},
	}
	
	\bib{VoisinHodge}{article}{
	   author={Voisin, C.},
	   title={Some aspects of the Hodge conjecture},
	   journal={Jpn. J. Math.},
	   volume={2},
	   date={2007},
	   number={2},
	   pages={261--296},
	   issn={0289-2316},
	   %review={\MR{2342587 (2008g:14012)}},
	   %doi={10.1007/s11537-007-0639-x},
	}

	\bib{Yang}{article}{
	   author={Yang, T.},
	   title={An explicit formula for local densities of quadratic forms},
	   journal={J. Number Theory},
	   volume={72},
	   date={1998},
	   number={2},
	   pages={309--356},
	   issn={0022-314X},
	   %review={\MR{1651696 (99j:11034)}},
	   %doi={10.1006/jnth.1998.2258},
	}

	\end{biblist}
	\end{bibdiv}

\end{document}